\documentclass[12pt,leqno]{article}
\usepackage{amssymb,amsmath,amsthm}
\usepackage[all]{xy}
\usepackage[alphabetic,lite]{amsrefs}
\numberwithin{equation}{section}

\usepackage[top=2cm,bottom=2cm,left=2cm,right=4cm,marginparsep=0.3cm,marginparwidth=3cm,includefoot]{geometry}

\usepackage{graphics}

\usepackage[titletoc,title]{appendix}
\usepackage{enumerate}
\usepackage{mathrsfs}
\usepackage{graphics}
\usepackage[usenames,dvipsnames]{xcolor}
\usepackage[colorlinks=true, pdfstartview=FitV, linkcolor=blue,
citecolor=blue, urlcolor=blue]{hyperref}
\usepackage{marginnote}
\usepackage{fancyhdr}
\pagestyle{fancy}
\rhead[\fancyplain{}{}]{\fancyplain{}{}}
\setlength{\headheight}{15pt}

\newenvironment{rouge}
{\relax\color{red}}
{\hspace*{.3ex}\relax}
\newcommand{\ber}{\begin{rouge}}
\newcommand{\er}{\end{rouge}}
\newcommand{\berm}{\begin{rouge}{}\marginnote{\fbox{\scshape\lowercase{M}}}{}}
\newcommand{\berp}{\begin{rouge}{}\marginnote{\fbox{\scshape\lowercase{P}}}{}}

\newenvironment{blue}
{\relax\color{PineGreen}}
{\hspace*{.3ex}\relax}

\newcommand{\beb}{\begin{blue}}
\newcommand{\eb}{\end{blue}}
\newcommand{\bebm}{\begin{blue}{}\marginnote{\fbox{\scshape\lowercase{M}}}{}}
\newcommand{\bebp}{\begin{blue}{}\marginnote{\fbox{\scshape\lowercase{P}}}{}}
\hyphenation{con-struc-ti-ble}


\newcommand{\nc}{\newcommand}
\nc{\on}{\operatorname}

\newcommand{\spa}{\vspace{0.3ex}\noindent}

\newlength{\my}
\setlength{\my}{\textwidth}
\addtolength{\my}{-50pt}
\nc{\noi}{\noindent}

\newtheorem{theorem}{Theorem}[subsection]
\newtheorem{proposition}[theorem]{Proposition}
\newtheorem{lemma}[theorem]{Lemma}
\newtheorem{corollary}[theorem]{Corollary}

\theoremstyle{definition}

\newtheorem{definition}[theorem]{Definition}
\newtheorem{notation}[theorem]{Notation}

\newtheorem{remark}[theorem]{Remark}

\newtheorem{conjecture}[theorem]{Conjecture}

\nc{\Rem}{\begin{remark}}
\nc{\enrem}{\end{remark}}
\nc{\Conj}{\begin{conjecture}}
\nc{\enconj}{\end{conjecture}}
\nc{\Th}{\begin{theorem}}
\nc{\enth}{\end{theorem}}
\nc{\Lemma}{\begin{lemma}}
\nc{\enlemma}{\end{lemma}}
\nc{\Cor}{\begin{corollary}}
\nc{\encor}{\end{corollary}}
\nc{\Def}{\begin{definition}}
\nc{\edf}{\end{definition}}



\nc{\RR}{\mathrm{R}}
\nc{\LL}{\mathrm{L}}


\newcommand{\C}{{\mathbb{C}}}
\newcommand{\N}{{\mathbb{N}}}
\newcommand{\R}{{\mathbb{R}}}

\newcommand{\cor}{{\mathbf{k}}}

\newcommand{\icor}{\mathrm{I}\mspace{1mu}\cor}

\nc{\tA}{\mathcal{A}}
\nc{\rA}{\mathscr{A}}

\def\D{\mathscr{D}}


\def\shd{\mathscr{D}}
\def\she{\mathscr{E}}
\def\shf{\mathscr{F}}

\def\shi{\mathscr{I}}

\def\sho{\mathscr{O}}




\renewcommand{\to}[1][]{\xrightarrow[]{#1}}

\newcommand{\isoto}[1][]{\xrightarrow[#1]%
{{\raisebox{-.6ex}[0ex][-.6ex]{$\mspace{1mu}\sim\mspace{2mu}$}}}}

\newcommand{\To}[1][\rule{1ex}{0pt}]{\xrightarrow{\hs{.6ex}#1\hs{.6ex}}}


\newcommand{\muHom}[1][]{\mathrm{Hom}^\mu_{\raise1.5ex\hbox to.1em{}#1}}
\newcommand{\Hom}[1][]{\mathrm{Hom}_{\raise1.5ex\hbox to.1em{}#1}}
\newcommand{\RHom}[1][]{\RR\mathrm{Hom}_{\raise1.5ex\hbox to.1em{}#1}}
\newcommand{\Ext}[2][]{\mathrm{Ext}_{\raise1.5ex\hbox to.1em{}#1}^{#2}}
\renewcommand{\hom}[1][]{{\mathscr{H}\mspace{-4mu}om}_{\raise1.5ex\hbox to.1em{}#1}}
\newcommand{\rhom}[1][]{{\RR\mathscr{H}\mspace{-3mu}om}_{\raise1.5ex\hbox to.1em{}#1}}
\newcommand{\rhomc}[1][]
{{\mathscr{H}\mspace{-3mu}om}^*_{\raise1.5ex\hbox to.1em{}#1}}

\newcommand{\cihom}[1][]
{{\mathscr{I}\mspace{-3mu}}{hom}^+_{\raise1.5ex\hbox to.1em{}#1}}

\nc{\ihom}[1][]{{\shi\mspace{-3mu}hom}_{\raise1.5ex\hbox to.1em{}#1}}
\nc{\rihom}[1][]{{\mspace{2mu}\mathrm{R}\shi\mspace{-3mu}hom}_{\raise1.5ex\hbox to.1em{}#1}}
\nc{\fihom}[1][]{{\shi\mspace{-3mu}hom}^{\mathrm{E}}_{\raise1.5ex\hbox to.1em{}#1}}
\nc{\FHom}[1][]{{\mathrm{RHom}^{\mathrm{E}}_{\raise1.5ex\hbox to.1em{}#1}}}
\nc{\fhom}[1][]{{\mathscr{H}%
\mspace{-3mu}om}^{\mathrm{E}}_{\raise1.5ex\hbox to.1em{}#1}}
\nc{\Endom}[1][]{{\she\mspace{-3mu}nd}_{\raise1.5ex\hbox to.1em{}#1}}

\nc{\Tam}{{\mathrm{E}}}

\newcommand{\ext}[2][]{{\mathscr{E}xt}_{\raise1.5ex\hbox to.1em{}#1}^{#2}}
\newcommand{\Tor}[2][]{\mathrm{Tor}^{\raise1.5ex\hbox to.1em{}#1}_{#2}}
\newcommand{\tens}[1][]{\mathbin{\otimes_{\raise1.5ex\hbox to-.1em{}{#1}}}}

\newcommand{\ltens}[1][]{\mathbin{\overset{\mathrm{L}}\tens}_{#1}}

\newcommand{\etens}{\mathbin{\boxtimes}}

\newcommand{\Endo}[1][]{\mathrm{End}_{\raise1.5ex\hbox to.1em{}#1}}

\newcommand{\Aut}[1][]{\mathrm{Aut}_{\raise1.5ex\hbox to.1em{}#1}}
\newcommand{\sect}{\Gamma}
\newcommand{\rsect}{\mathrm{R}\Gamma}

\newcommand{\VV}{{\mathsf{V}}}
\newcommand{\VVd}{{\mathsf{V}^*}}

\newcommand{\WW}{{\mathbb{V}}}

\newcommand{\oim}[1]{{#1}_*}
\newcommand{\eim}[1]{{#1}_!}
\newcommand{\roim}[1]{\RR{#1}_*}
\newcommand{\reim}[1]{\RR{#1}_!}
\newcommand{\reiim}[1]{\RR{#1}_{\mspace{1mu}!!}}

\newcommand{\opb}[1]{#1^{-1}}
\newcommand{\pshopb}[1]{#1^{\dag}}

\newcommand{\epb}[1]{#1^{\,!}\,}
\newcommand{\spb}[1]{#1^{*}}

\newcommand{\Dtens}[1][]{\overset{\mathrm{D}}\otimes_{\raise1.5ex\hbox to-.1em{}#1}}
\newcommand{\Detens}[1][]{\overset{\mathrm{D}}\etens_{\raise1.5ex\hbox to-.1em{}#1}}

\nc{\rE}{\mathrm{E}}
\nc{\enh}{\mathsf{E}}

\nc{\EF}[1][]{{}^{\enh}\mspace{-3mu}\shf_{#1}}
\nc{\EFa}[1][]{{}^{\enh}{\mspace{-3mu}\shf^a_{#1}}}
\nc{\FS}[1][]{{}^{\mathrm{S}}{\mspace{-3mu}\shf_{#1}}}
\nc{\FSa}[1][]{{}^{\mathrm{S}}{\mspace{-3mu}\shf^a_{#1}}}
\nc{\Leg}[1][]{{\mathrm{Conv}}{(#1)}}
\nc{\dom}{\mathrm{dom}}
\nc{\domo}{\dom^\circ}

\nc{\hol}{\mathrm{hol}}


\nc{\Mod}{\mathrm{Mod}}
\nc{\rh}{\mathrm{rh}}

\nc{\sHH}{\mathscr{H}\mspace{-4mu}\mathscr{H}}

\nc{\sMH}{\mathscr{M}\mspace{-4mu}\mathscr{H}}


\newcommand{\eqdot}{\mathbin{:=}}
\newcommand{\seteq}{\mathbin{:=}}

\newcommand{\cl}{\colon}
\newcommand{\scbul}{{\,\raise.4ex\hbox{$\scriptscriptstyle\bullet$}\,}}

\newcommand{\twX}{{\widetilde{X}}}

\newcommand{\rmH}{{\mathrm{H}}}

\newcommand{\ol}{\overline}




\newcommand{\ba}{\begin{array}}
\newcommand{\ea}{\end{array}}
\newenvironment{nnum}{
\begin{enumerate}
\itemsep=0pt

}
{\end{enumerate}}

\nc{\be}{\begin{enumerate}}
\nc{\ee}{\end{enumerate}}
\newcommand{\bnum}{\begin{enumerate}[{\rm(i)}]}
\newcommand{\enum}{\end{enumerate}}
\newcommand{\banum}{\begin{enumerate}[{\rm(a)}]}
\newcommand{\eanum}{\end{enumerate}}

\newenvironment{myalign}
{\relax\begin{align}}
{\end{align}}
\newenvironment{myalignn}
{\relax\begin{align*}}
{\relax\end{align*}}

\nc{\eq}{\begin{eqnarray}}
\nc{\eneq}{\end{eqnarray}}
\nc{\eqn}{\begin{eqnarray*}}
\nc{\eneqn}{\end{eqnarray*}}

\nc{\eqa}{\begin{myalign}}
\nc{\eneqa}{\end{myalign}}
\nc{\eqan}{\begin{myalignn}}
\nc{\eneqan}{\end{myalignn}}

\newcommand{\set}[2]{\left\{#1 \mathbin{;} #2 \right\}}

\nc{\Proof}{\begin{proof}}
\nc{\QED}{\end{proof}}
\nc{\Prop}{\begin{proposition}}
\nc{\enprop}{\end{proposition}}


\nc{\rop}{{\mathrm{op}}}
\nc{\op}{\rop}
\nc{\tot}{\mathrm{tot}}
\nc{\Op}{{\mathrm{Op}}}
\nc{\Ouv}{{\mathrm{Ouv}}}
\nc{\dist}{{\mathrm{dist}}}
\nc{\LocSyst}{{\mathrm{LocSyst}}}
\nc{\eu}{\mathrm{eu}}
\nc{\hh}{\mathrm{hh}}
\nc{\mueu}{{\mu\eu}}

\DeclareMathOperator{\id}{id}

\newcommand{\Supp}{\on{Supp}}
\newcommand{\Der}[1][]{\mathsf{D}^{#1}}
\newcommand{\Derb}{\Der[\mathrm{b}]}

\newcommand{\Derp}{\Der[+]}

\newcommand{\dT}{{\dot{T}}}

\nc{\wc}[1]{\overset{\mbox{$\scriptscriptstyle\vee$}}{#1}}
\nc{\field}{\cor}
\nc{\bM}{\widehat M}
\nc{\bN}{\widehat N}
\nc{\bX}{{\widehat X}}
\nc{\bS}{\widehat S}
\nc{\bY}{\widehat Y}
\nc{\bL}{\widehat L}
\nc{\bR}{{\ol\R}}
\nc{\bV}{{\ol \VV}}
\nc{\bW}{{\ol \WW}}

\nc{\bVd}{{\ol \VVd}}
\nc{\bWd}{{{\ol \WW}^*}}

\nc{\oM}{{\ol M}}
\nc{\oN}{\ol N}
\nc{\oX}{{\ol X}}
\nc{\oS}{\ol S}
\nc{\oY}{\ol Y}
\nc{\oL}{\ol L}
\nc{\oR}{{\ol\R}}
\nc{\Tl}{\mathrm{L^E}}
\nc{\Tr}{\mathrm{R^E}}

\nc{\sa}{\mathrm{sa}}
\nc{\sas}{\mathrm{sas}}
\nc{\aln}{\mathrm{aln}}
\nc{\zar}{\mathrm{zar}}
\nc{\tz}{\mathrm{tz}}
\nc{\gsa}{\mathrm{gsa}}
\nc{\sat}{\mathrm{sat}}
\nc{\an}{\mathrm{an}}
\nc{\ant}{\mathrm{tan}}
\nc{\temp}{\mathrm{tan}}
\nc{\stan}{\mathrm{sta}}
\nc{\stza}{\mathrm{stz}}

\newcommand{\Msa}{{M_{\sa}}}

\newcommand{\Nsa}{{N_{\sa}}}
\newcommand{\Xsa}{{X_{\sa}}}

\newcommand{\Xcsa}{{X^c_{\sa}}}
\newcommand{\Ycsa}{{Y^c_{\sa}}}
\newcommand{\Xsas}{{X_{\sas}}}
\newcommand{\Xsat}{{X_{\sat}}}

\newcommand{\Ysa}{{Y_{\sa}}}

\newcommand{\Ysat}{{Y_{\sat}}}
\newcommand{\Ysas}{{Y_{\sas}}}
\newcommand{\Xz}{{X_{\zar}}}
\newcommand{\Yz}{{Y_{\zar}}}
\newcommand{\Usa}{{U_{\sa}}}
\newcommand{\Umsa}{{U_{\Msa}}}
\newcommand{\Vnsa}{{V_{\Nsa}}}
\newcommand{\Uxsa}{{U_{\Xsa}}}
\newcommand{\Vysa}{{V_{\Ysa}}}

\newcommand{\rhoz}{\rho_{\zar}}
\newcommand{\rhostz}{\rho_{\stza}}
\newcommand{\rhosa}{\rho_{\sa}}

\newcommand{\rhosas}{\rho_{\sas}}
\newcommand{\rhosat}{\rho_{\sat}}

\newcommand{\rhosasa}{\rho_{\sa\sa}}

\newcommand{\erx}{\eim{\rhosa}}

\newcommand{\Ox}{\sho_{\Xsa}}
\newcommand{\Otx}{\Ot[\Xsa]}
\newcommand{\Otsx}{\Ot[\Xsas]}

\newcommand{\Otsy}{\Ot[\Ysas]}

\newcommand{\Otax}{\Ot[X^\stan]}
\newcommand{\Otempx}{\Ot[X^\temp]}
\newcommand{\Otempy}{\Ot[Y^\temp]}

\newcommand{\Owx}{{\sho^\omega_{\Xsa}}}

\newcommand{\Ocwx}{{\sho^\omega_{\Xcsa}}}
\newcommand{\Ocwy}{{\sho^\omega_{\Ycsa}}}

\newcommand{\Ozx}{{\sho_\Xz}}
\newcommand{\Oty}{\Ot[\Ysa]}
\newcommand{\Ozy}{{\sho_\Yz}}

\newcommand{\Dbv}{{\mathcal D} b^\vee}
\newcommand{\Db}{{\mathcal D} b}
\newcommand{\Dbt}{{\mathcal D} b^{\mspace{2mu}\mathrm t}}
\newcommand{\Dbtv}{{\mathcal D} b^{\mathrm t\vee}}
\newcommand{\Cinft}[1][M]{\mathcal{C}^{\infty,\mathrm t}_{#1}}

\newcommand{\Cinf}[1][M]{\mathcal{C}^{\infty}_{#1}}

\newcommand{\Cinfn}[2]{\mathcal{C}^{\infty,{#2}}_{#1}}

\newcommand{\Ot}[1][X]{\sho^{\mspace{2.5mu}{\mathrm t}}_{#1}}

\nc{\BRC}{{\R}\hbox{-}{\mathrm{Cons}}}
\nc{\Brc}{{\R}\hbox{-}{\mathrm{C}}}

%


\newcommand{\indlim}[1][]{\mathop{\varinjlim}\limits_{#1}}

\newcommand{\prolim}[1][]{\mathop{\varprojlim}\limits_{#1}}

\nc{\eps}{\varepsilon}
\nc{\hs}{\hspace*}
\nc{\ms}{\mspace}
\nc{\nn}{\nonumber}
\nc{\tM}{\widetilde{M}}
\nc{\h}{\mathbf{h}}
\nc{\tf}{\tilde{f}}
\nc{\trf}{{{}^{\mathrm{t}}\mspace{-3mu}f}}
\nc{\codim}{\on{codim}}
\nc{\lh}{\mathscr{H}}
\nc{\bwr}{\scalebox{1.1}{$\wr$}}
\nc{\dTi}{\dT^{*,\mathrm{in}}}
\nc{\Cd}{\mathrm{C}}
\nc{\tK}{\widetilde{K}}
\nc{\aMM}{a_{M\times M}}
\nc{\e}{\mspace{1mu}\mathrm{e}\mspace{1mu}}
\nc{\lan}{\langle}
\nc{\ran}{\rangle}
\nc{\la}{\lambda}

\nc{\vphi}{\varphi}
\nc{\vep}{\varepsilon}

\nc{\At}{\tA_\twX}
\nc{\bu}{\boldsymbol{u}}
\nc{\bA}{\boldsymbol{A}}
\nc{\hbu}{\widehat{\boldsymbol{u}}}
\nc{\ex}{\mathrm{e}}
\nc{\vpi}{\varpi}

\nc{\one}{\mathbf{1}}
\nc{\setp}[1]{\{#1\}}
\nc{\GL}{\mathrm{GL}}
\nc{\cI}{\mathrm{I}}

\nc{\vs}{\vspace*}

\nc{\wb}[1]{\mbox{$\rule[-1.1ex]{0ex}{2ex}#1$}}
\nc{\wwb}[1]{\mbox{$\rule[-1.8ex]{0ex}{3ex}#1$}}
\nc{\bpi}{\ol{\pi}}
\nc{\Tsupp}{\Supp^{\mathrm E}}

\nc{\abu}{\Vec{\bu}}
\nc{\av}{\Vec{v}}
\nc{\au}{\Vec{u}}

\nc{\FN}{\mathrm{FN}}
\nc{\DFN}{\mathrm{DFN}}

\nc{\ake}{\hs{.25ex}}

\nc{\va}{\Vec{a}}
\nc{\bal}{\begin{align}}
\nc{\aal}{\end{align}}
\nc{\baln}{\begin{align*}}
\nc{\ealn}{\end{align*}}

\newcommand{\T}{\mathcal{T}}
\newcommand{\TP}{\mathcal{TP}}
\newcommand{\Stn}{\mathrm{Stn}}

\begin{document}

\title{Tempered subanalytic topology on algebraic varieties}
\author{Fran{\c c}ois Petit \footnote{The author has been fully supported in the frame of the OPEN scheme of the Fonds National de la Recherche (FNR) with the project QUANTMOD O13/570706}}


\maketitle

\begin{abstract}

On a smooth algebraic variety over $\C$, we build the tempered subanalytic and Stein tempered  subanalytic sites. We construct the sheaf of  holomorphic functions tempered  at infinity over these sites and study their relations with the sheaf of regular functions,  proving in particular that these  sheaves are isomorphic on Zariski open subsets. We show that these data allow to define the functors of tempered and Stein tempered analytifications. We study the relations between these two functors and the usual analytification functor.  
 We also obtain algebraization results in the non-proper case and flatness results.

\footnote{2010 Mathematics Subject Classification: 32C38, 14A10}
\end{abstract}
\setcounter{tocdepth}{2}
\tableofcontents
\section*{Introduction}

The problem of comparing algebraic and complex analytic geometry is a very classical question. This comparaison is carried out by first functorially associating to an algebraic variety an complex analytic space with the help of the analytification functor. Then, one can try to compare the properties of these two spaces. In the case of proper algebraic varieties, this problem has been settle by Serre in his seminal paper \cite{Se55} and his famous GAGA theorem. In the non-proper case, this theorem does not hold but some comparison results between analytic and algebraic objects have been obtained under some growth condition on the analytic functions. One usually requires the holomorphic functions to be tempered which means that they have polynomial growth with respect to the inverse of the distance to the boundary. A very classical instance of such results is Liouville's theorem asserting that a tempered entire function is a polynomial. Several authors have studied tempered holomorphic functions on affine algebraic varieties (see for instance \cite{Bj74, Ru68, RW80}). Our aim, in this paper, is to sheafify the study of tempered holomorphic functions on algebraic variety and to compare the tempered geometry we obtain with algebraic geometry. For that purpose we construct two functors of tempered analytification, study their relations, properties and relate them to the usual analytification functor.
 
To carry out this idea, we rely upon Kashiwara-Schapira's theory of subanalytic sheaves \cite{KS01} which 
allows one to define sheaves the sections of which satisfy growth conditions (\cite{KS01}, \cite{SG16}). With the help of this theory, we associate to an algebraic variety the tempered subanalytic site, the Stein tempered subanalytic site and the sheaves of rings of tempered holomorphic functions over them. Then, we study the relationship between these sheaves and the sheaf of regular functions. We prove that on a Zariski open subset of $X$ the ring of tempered holomorphic functions is isomorphic to the ring of regular functions. This is a generalization of the aforementioned Liouville's Theorem since it implies that on a Zariski open set a tempered holomorphic function is a regular function.

In the last part of this paper, we discuss the possibility of getting a tempered GAGA theorem in the non-proper case. We construct an exact functor from the category of coherent algebraic sheaves to the category of modules over the sheaf of tempered holomorphic functions on the Stein tempered subanalytic site associated with $X$. 
We prove the fully faithfulness of this functor under the assumption of the vanishing of the tempered Dolbeaut cohomology on relatively compact subanalytic pseudo-convex open subset of $\C^n$. This vanishing result seems to be known to some specialists.\\

\noindent\textbf{Acknowledgment:} We would like to thank Pierre Schapira for many scientific advice and  for generously sharing is knowledge of the theory of subanalytic sheaves. We also thank Jean-Pierre Demailly for several pointed explanations concerning tempered Dolbeaut cohomolgy as well as Mauro Porta for useful discussions. We also thanks Tony Pantev for his invitation and hospitality to the University of Pennsylvania where this work was completed.

\section{Sheaves on the subanalytic site}
The definitions and results of the subsections \ref{subsec:satop} and \ref{subsec:temphol} are due to~\cites{KS96, KS01} and we follow closely their presentation. We also refer to \cite{Pre08}, where the operations for subanalytic sheaves are developed without relying on the theory of ind-sheaves.
\subsection{Subanalytic topology, tempered functions and distributions: a review}\label{subsec:satop}
\subsubsection{The subanalytic topology}
\begin{definition}
\banum
\item Let $M$ be a real analytic manifold. Let $\Op_M$ be the category where the objects are the open subsets of $M$ and the morphisms are the inclusions between open subsets. The category $\Op_\Msa$ is the full subcategory of $\Op_M$ spanned by the relatively compact subanalytic open subsets of $M$. 

\item The site $\Msa$ is the category $\Op_\Msa$ endowed with the Grothendieck topology where a family  $\{U_i\}_{i\in I}$ of objects is a covering of $U \in\Op_{\Msa}$ if for every $i \in I$, $U_i \subset U$ and there exists a finite subset $J\subset I$ such that  $\bigcup_{j\in J}U_j=U$.
\eanum
\end{definition}

Let $\cor$ be a field. The Grothendieck category of sheaves of $\cor$-modules on $\Msa$ is denoted by $\md[\cor_\Msa]$.
The next result is useful to construct subanalytic sheaves. It is a special case of \cite[Proposition 6.4.1]{KS01}. 
\begin{proposition}\label{prop:MVsa1} A presheaf $F$ on $\Msa$ is a sheaf if and only if $F(\emptyset)=0$ and for any pair $(U_1,U_2)$ in $\Op_\Msa$, the sequence
\eqn
0\to F(U_1\cup U_2)\to F(U_1)\oplus F(U_2)\to F(U_1\cap U_2)
\eneqn 
 is exact.
\end{proposition}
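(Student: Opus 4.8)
The plan is to treat the two implications separately. The forward direction is essentially formal: if $F$ is a sheaf, then $\emptyset$ is covered by the empty family (indeed $\bigcup_{j\in\emptyset}U_j=\emptyset$ and $\emptyset$ is a finite subset of itself), so the sheaf axiom forces $F(\emptyset)=0$; and for $U_1,U_2\in\Op_\Msa$ the union $U_1\cup U_2$ again belongs to $\Op_\Msa$ (a finite union of relatively compact subanalytic opens is of the same kind), the family $\{U_1,U_2\}$ covers it, and unwinding the equalizer diagram of the sheaf axiom for this two-element covering --- noting that the terms indexed by $U_i\cap U_i=U_i$ contribute zero --- gives exactly the exactness asserted in the displayed sequence.

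For the converse I would first reduce to finite coverings. Assume $F(\emptyset)=0$ and the Mayer--Vietoris condition, and let $\{U_i\}_{i\in I}$ be a covering of $U\in\Op_\Msa$, with finite $J\subset I$ such that $\bigcup_{j\in J}U_j=U$ (such a $J$ exists by the very definition of the subanalytic topology). If a section $s\in F(U)$ restricts to $0$ on every $U_i$, then in particular on every $U_j$ with $j\in J$, so $s=0$ once we know the sheaf axiom for the finite covering $\{U_j\}_{j\in J}$. If $(s_i)_{i\in I}$ is compatible, the finite axiom glues $(s_j)_{j\in J}$ to some $s\in F(U)$; for arbitrary $i\in I$ the sets $U_i\cap U_j$, $j\in J$, form a finite covering of $U_i$ by objects of $\Op_\Msa$, and $s|_{U_i}$ and $s_i$ restrict to the same element of each $F(U_i\cap U_j)$, so $s|_{U_i}=s_i$ by the finite axiom for this covering. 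Hence it suffices to prove the sheaf axiom for finite coverings.

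This I would prove by induction on the number $n$ of members of the covering $\{U_1,\dots,U_n\}$ of $U$. The case $n=0$ is the hypothesis $F(\emptyset)=0$ and $n=1$ is trivial. For $n\geq2$ set $V=U_1\cup\dots\cup U_{n-1}\in\Op_\Msa$, so that $U=V\cup U_n$; the Mayer--Vietoris hypothesis applied to the pair $(V,U_n)$ gives the exactness of $0\to F(U)\to F(V)\oplus F(U_n)\to F(V\cap U_n)$, while the inductive hypothesis applies to the covering $\{U_1,\dots,U_{n-1}\}$ of $V$ and to the covering $\{U_1\cap U_n,\dots,U_{n-1}\cap U_n\}$ of $V\cap U_n$. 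A diagram chase combining these three facts yields the sheaf axiom for $\{U_1,\dots,U_n\}$: injectivity of $F(U)\to\bigoplus_kF(U_k)$ is immediate, and a compatible family $(s_k)_{k\leq n}$ is glued by first producing $t\in F(V)$ with $t|_{U_k}=s_k$ for $k<n$, then checking that $t$ and $s_n$ agree on $V\cap U_n$ by restricting to the covering $\{U_k\cap U_n\}_{k<n}$ and using injectivity there, and finally applying Mayer--Vietoris for $(V,U_n)$.

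The main obstacle is precisely this final diagram chase: one has to track restrictions carefully through the three simultaneous instances of the axiom, and verify at each stage that all the sets that occur --- the partial unions $U_1\cup\dots\cup U_k$ and the pairwise intersections $U_k\cap U_l$ --- genuinely lie in $\Op_\Msa$, which relies on the stability of relatively compact subanalytic open subsets under finite unions and finite intersections. Apart from this bookkeeping the argument is the familiar one showing that a presheaf satisfying Mayer--Vietoris on a basis closed under finite unions and intersections is a sheaf; alternatively, as the statement indicates, one may simply invoke \cite[Proposition~6.4.1]{KS01}, of which this is the special case for the site $\Msa$.
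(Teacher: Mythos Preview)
Your argument is correct. Note, however, that the paper does not actually give its own proof of this proposition: it simply records the statement and points to \cite[Proposition~6.4.1]{KS01}, of which it is a special case. Your proposal supplies the standard direct proof (forward direction formal; converse by reduction to finite covers via the definition of the subanalytic topology, then induction on the size of the cover using the Mayer--Vietoris hypothesis), and you yourself observe at the end that one may alternatively just invoke that reference. So there is no discrepancy in approach --- you have written out what the cited result amounts to in this setting, while the paper is content to cite it.
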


The morphism of sites $\rhosa$ induces the following adjoint pair of functors
\eq\label{eq:fctrho0}
\xymatrix{
\opb{\rhosa} \colon \md[\cor_\Msa] \ar@<.5ex>[r]&\md[\cor_M]\ar@<.5ex>[l]\colon \oim{\rhosa}\,.
}
\eneq
Moreover, $\opb{\rhosa}$ is exact and the functor $\oim{\rhosa}$ is fully faithful and left exact. This gives rise to the adjoint pair of functors
\eq\label{eq:fctrho1}
\xymatrix{
 \opb{\rhosa} \cl \Derb(\cor_\Msa) \ar@<.5ex>[r]&\Derb(\cor_M)\ar@<.5ex>[l] \cl \roim{\rhosa}\,.
}
\eneq
The functor $\roim{\rhosa}$ is fully faithful.

The functor $\opb{\rhosa}$ has also a left adjoint, denoted $\eim{\rhosa} \cl \md[\cor_M] \to \md[\cor_\Msa]$. We refer the reader to \cite[\S 6.6]{KS01} for more details and only recall its construction and main properties. If  $F\in \md[\cor_M]$, $\eim{\rhosa}F$ is defined as the sheafification of the presheaf $\Op_{\Msa} \ni U\mapsto F(\overline U)$. The functor $\eim{\rhosa}$
is exact, fully faithful and commutes with tensor products.

The situation is summarized by the following diagram.
\eqn
&& \xymatrix{
{\md[\cor_M]}\ar@<-1.5ex>[rr]_-{\eim{\rhosa}}\ar@<1.5ex>[rr]^-{\oim{\rhosa}}&&{\md[\cor_\Msa]}\ar[ll]|-{\;\opb{\rhosa}\;}.
}
\eneqn
where $(\opb{\rhosa},\oim{\rhosa})$ and $(\eim{\rhosa},\opb{\rhosa})$ form adjoint pairs of functors.

Note that $M$ is not an object of $\Op_\Msa$ unless $M$ is compact.  Thus, we define the global section functor as follow. For an object $F$ of $\Der(\cor_\Msa)$, one sets
 \eq\label{eq:rsectMsa}
 &&\rsect(\Msa;F)=\RHom(\cor_\Msa,F).
 \eneq
 It follows from the isomorphism $\cor_\Msa\simeq\eim{\rhosa}\cor_M$, the adjunction 
 $(\eim{\rhosa},\opb{\rhosa})$ and formula~\eqref{eq:rsectMsa} that
\eq\label{eq:rsectMsa2}
&&\rsect(\Msa;F)\simeq\rsect(M;\opb{\rhosa} F).
\eneq

\subsubsection{Induced topology}
Let $U\in\Op_\Msa$.  We endow $U$ with the topology induced by $\Msa$, that is,
 we consider the site $\Umsa$ defined as follows
 \banum
\item
the objects of $ \Op_{\Umsa}$  are the open subanalytic subsets of $U$, no more necessarily relatively compact,
\item
a covering of $V \in \Op_{\Umsa}$  is a family $\{V_i\}_{i \in I}$ of objects of $\Op_\Msa$ such that $V_i\subset V$ and there is a finite subset $J\subset I$ with $V=\bigcup_{j\in J}V_j$. 
\eanum
Note that the natural morphism of sites $\Umsa \cl \Usa \to \Umsa$ is not an equivalence of sites in general.

\subsubsection{Tempered functions and distributions}\label{subsec:temp}
All along this paper $M$ is a real analytic manifold. We denote by $\Cinf$ the sheaf of  $\C$-valued functions of class $\mathrm{C}^\infty$ on $M$ and by $\Db_M$ the sheaf of Schwartz's distributions on $M$.

\begin{definition}\label{def:polgrowth}
Let $U$ be an open subset of $M$  and $f\in \Cinf(U)$. 
The function $f$ has {\it  polynomial growth} at $p\in M$
if  $f$ satisfies the following condition: for a local coordinate system $(x_1,\dots,x_n)$ around $p$, there exist a sufficiently small compact neighbourhood $K$ of $p$
and a positive integer $N$ such that
\eq
&&\sup\limits_{x\in K\cap U}\big(\dist(x,K\setminus U)^N\vert f(x)\vert )\big)
<\infty\,.\label{eq:moderate}
\eneq
Here, $\dist(x,K\setminus U)\seteq\inf\set{|y-x|}{y\in K\setminus U}$,
and we understand that the left-hand side of \eqref{eq:moderate} is $0$
if $ K\cap U=\emptyset$ or $K\setminus U=\emptyset$. We say that $f$ has {\em polynomial growth} 
if it has polynomial growth at any point of $M$. We say that $f$ is {\em tempered at $p$ } if all its derivatives have polynomial growth at $p$. We say that $f$ is {\em tempered} 
if it is tempered at any point of $M$.
\end{definition}

\begin{remark}\label{rem:polyimptem}
\noindent (i) In the above definition, $f$ has polynomial growth at any point of the open subset $U$.

\noindent (ii) A holomorphic function which has polynomial growth on a relatively compact open subset $U$ of $\C^n$ is tempered on $U$. This follows from the Cauchy formula (see \cite[Lemma 3]{Si70} for a detailed proof). 

\end{remark}

We recall Lojaciewicz's inequality.

\begin{theorem}[Lojaciewicz's inequality \cite{BM88}] Let $M$ be a real analytic manifold and let $K$ be a subanalytic subset of $M$. Let $f, \, g\colon K \to \R$ be subanalytic functions with compact graphs. If $f^{-1}(0) \subset g^{-1}(0)$, then there exists $c, \, r >0$ such that, for all $x \in K$,
\eqn
\vert f(x) \vert \geq c \, \vert g(x) \vert^r.
\eneqn
\end{theorem}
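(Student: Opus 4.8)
The plan is to reduce the statement to the classical real-analytic Łojasiewicz inequality on a compact manifold, by uniformizing the graph of the pair $(f,g)$.

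First I would form the set $\Gamma=\set{(x,f(x),g(x))}{x\in K}\subset M\times\R^2$. Since $\Gamma_f=\set{(x,f(x))}{x\in K}$ and $\Gamma_g=\set{(x,g(x))}{x\in K}$ are compact subanalytic by hypothesis, $\Gamma$ is, up to a permutation of coordinates, the intersection of $\Gamma_f\times\R$ and $\Gamma_g\times\R$ in $M\times\R^2$, hence again compact and subanalytic; moreover it is precisely the graph of $x\mapsto(f(x),g(x))$. I would then invoke the uniformization theorem for subanalytic sets (a consequence of Hironaka's resolution of singularities; see \cite{BM88}): there exist a real-analytic manifold $N$ and a proper real-analytic map $\psi\cl N\to M\times\R^2$ with $\psi(N)=\Gamma$, and since $\Gamma$ is compact and $\psi$ proper, $N=\opb\psi(\Gamma)$ is compact.

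Writing $\psi=(u,F,G)$ with $u\cl N\to M$ and $F,G\cl N\to\R$ real-analytic, the identity $\psi(N)=\Gamma$ gives $F(p)=f(u(p))$ and $G(p)=g(u(p))$ for every $p\in N$; in particular $F(p)=0$ forces $u(p)\in\opb f(0)\subset\opb g(0)$, i.e. $G(p)=0$, so $\opb F(0)\subset\opb G(0)$. Next I would apply the classical Łojasiewicz inequality on the compact manifold $N$ to obtain $c_0,r>0$ with $\vert F(p)\vert\ge c_0\,\dist(p,\opb F(0))^r$ for all $p\in N$ (the case $\opb F(0)=\emptyset$ being trivial, since then $\vert F\vert\ge\min_N\vert F\vert>0$), and use that the analytic function $G$ is Lipschitz on the compact $N$, with some constant $L$, so that for $p\in N$ and $q\in\opb G(0)$ nearest to $p$ one has $\vert G(p)\vert=\vert G(p)-G(q)\vert\le L\,\dist(p,\opb G(0))\le L\,\dist(p,\opb F(0))$, the last inequality using $\opb F(0)\subset\opb G(0)$. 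Combining the two bounds yields $\vert F(p)\vert\ge c_0L^{-r}\vert G(p)\vert^r$ on $N$, and since every triple $(x,f(x),g(x))$ lies in $\Gamma=\psi(N)$, choosing $p$ with $\psi(p)=(x,f(x),g(x))$ gives $\vert f(x)\vert=\vert F(p)\vert\ge c_0L^{-r}\vert G(p)\vert^r=c_0L^{-r}\vert g(x)\vert^r$, which is the assertion with $c=c_0L^{-r}$.

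\textbf{Main obstacle.} The only deep input is the uniformization theorem, which is where resolution of singularities enters; everything after the parametrization is soft (a Lipschitz estimate and the analytic Łojasiewicz inequality, itself provable by induction on dimension). A more hands-on alternative would replace $f,g$ by $\vert f\vert,\vert g\vert$, form the compact subanalytic set $B=\set{(g(x),f(x))}{x\in K}\subset\R^2_{\ge0}$, observe that $\opb f(0)\subset\opb g(0)$ forces $B\cap(\R_{>0}\times\{0\})=\emptyset$, introduce the one-variable subanalytic function $\mu(t)=\inf\set{v}{(t,v)\in B}$ (positive for small $t>0$, since the fibres of $B$ are compact and miss $0$), and conclude from the Puiseux-type asymptotics of $\mu$ near $0$ that $\mu(t)\ge c\,t^r$ there, the range $t\ge\varepsilon$ being handled by compactness; but this route also ultimately rests on the structure theory of subanalytic sets.
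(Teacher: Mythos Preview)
The paper does not prove this theorem: it is quoted verbatim as a result of Bierstone--Milman with the citation \cite{BM88}, and is used thereafter only as a black box (to derive Lemma~\ref{th:Loj1} and in the proof of Proposition~\ref{lem:precomposition}). There is therefore no ``paper's own proof'' to compare against.

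That said, your sketch is a correct outline and is in fact the standard route, essentially the one in \cite{BM88}: reduce to real-analytic functions on a compact source via uniformization of the compact subanalytic graph, then feed the inclusion $\opb F(0)\subset\opb G(0)$ into the analytic \L ojasiewicz inequality plus a Lipschitz bound. Two small points worth tightening: (i) when you write $N=\opb\psi(\Gamma)$ to deduce compactness, you are implicitly using that $\psi$ maps \emph{into} $\Gamma$ (so $N=\opb\psi(\Gamma)$ trivially) together with properness of $\psi$; this is fine but should be said cleanly. (ii) The Lipschitz step needs $\opb G(0)\neq\emptyset$; you only handled $\opb F(0)=\emptyset$ explicitly, but of course $\opb F(0)\neq\emptyset$ forces $\opb G(0)\neq\emptyset$ by the inclusion, so there is no gap. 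Your alternative one-variable argument via the infimum function $\mu$ is also a legitimate and well-known proof (it is the approach that avoids uniformization in favour of the structure of subanalytic subsets of $\R^2$); either route is acceptable.
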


The following metric property of subanalytic subsets of $\R^n$ is a consequence of the Lojaciewicz's inequality (See the seminal papers by Lojasiewicz~\cite{Lo59} and Malgrange~\cite{Ma66}). This result is used to construct the sheaf of tempered smooth functions.

\begin{lemma}\label{th:Loj1}
Let $U$ and $V$ be two relatively compact subanalytic open  subsets of $\R^n$.
Then, there exist a positive integer $N$ and a constant $C>0$ such that
\eqn
&&\dist\big(x,\R^n\setminus (U\cup V)\big)^N
\leq C \big(\dist(x,\R^n\setminus U)+\dist(x,\R^n\setminus V)\big).
\eneqn
\end{lemma}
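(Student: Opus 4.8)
The plan is to reduce the estimate to Łojasiewicz's inequality on a suitable compactification of $\R^n$, applied to carefully chosen subanalytic distance functions. First I would fix a relatively compact open ball $B\subset\R^n$ containing the closures $\ol U$ and $\ol V$, so that everything takes place inside the compact set $\ol B$; this is harmless because $\dist(x,\R^n\setminus W)$ for $W\in\{U,V,U\cup V\}$ is controlled on $\ol B$ and vanishes outside a bounded set anyway, so it suffices to prove the inequality for $x$ ranging over the compact subanalytic set $K\eqdot\ol B$. Next I would introduce the two subanalytic functions on $K$
\eqn
f(x)&\eqdot&\dist(x,\R^n\setminus U)+\dist(x,\R^n\setminus V),\\
g(x)&\eqdot&\dist\big(x,\R^n\setminus(U\cup V)\big).
\eneqn
Both are subanalytic (distance functions to subanalytic sets are subanalytic), continuous, bounded on $K$, hence have compact graphs.

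The key point is the inclusion of zero sets $f^{-1}(0)\subset g^{-1}(0)$ on $K$. Indeed, if $f(x)=0$ then $\dist(x,\R^n\setminus U)=0$ and $\dist(x,\R^n\setminus V)=0$, i.e.\ $x\in\ol{\R^n\setminus U}\cap\ol{\R^n\setminus V}=\R^n\setminus U\cap\R^n\setminus V$ taking closures appropriately; in any case $x$ lies in the closure of the complement of $U$ and in the closure of the complement of $V$, hence in the closure of $\R^n\setminus(U\cup V)$ (since $\R^n\setminus(U\cup V)=(\R^n\setminus U)\cap(\R^n\setminus V)$, and a point adherent to both closed sets... here one must be slightly careful: $\ol{A}\cap\ol{B}$ need not equal $\ol{A\cap B}$ in general). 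To handle this I would instead argue directly: $f(x)=0$ forces $\dist(x,\R^n\setminus U)=0$, so $x\notin U$ (as $U$ is open, points of $U$ have positive distance to the complement), and likewise $x\notin V$; therefore $x\notin U\cup V$, so $\dist(x,\R^n\setminus(U\cup V))=0$, i.e.\ $g(x)=0$. This gives $f^{-1}(0)\subset g^{-1}(0)$ cleanly. Applying the Łojasiewicz inequality (the theorem quoted just above) to $f,g$ on $K$ yields constants $c,r>0$ with $|f(x)|\geq c\,|g(x)|^r$ for all $x\in K$, that is
\eqn
\dist\big(x,\R^n\setminus(U\cup V)\big)^r\leq \tfrac1c\big(\dist(x,\R^n\setminus U)+\dist(x,\R^n\setminus V)\big).
\eneqn

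It remains to replace the real exponent $r$ by a positive integer $N$ and to pass from $K=\ol B$ back to all of $\R^n$. For the exponent: since $g(x)=\dist(x,\R^n\setminus(U\cup V))$ is bounded on $K$ by some $R>0$, we have $g(x)^N\leq R^{N-r}g(x)^r$ for any integer $N\geq r$, so absorbing $R^{N-r}$ into the constant gives the desired inequality with an integer $N$. For the localization: if $x\notin\ol B$, then since $\ol U,\ol V,\ol{U\cup V}$ are all contained in $B$, we can choose $B$ large enough that $\dist(x,\R^n\setminus(U\cup V))$ is comparable to (in fact equal to, for $x$ far from $B$) the analogous distances, or more simply note that for $x$ outside a fixed bounded neighbourhood of $B$ all three distances vanish and the inequality is trivial; in the intermediate region one shrinks back into $\ol B'$ for a slightly larger ball $B'$ and reruns the argument, the point being that only finitely many "shells" are needed by compactness. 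I expect the main obstacle to be precisely this bookkeeping of constants and the careful verification of the zero-set inclusion together with the reduction to a compact set—Łojasiewicz's inequality itself does the real work, but it is only stated for functions with compact graphs, so the passage between the global statement on $\R^n$ and the compact setting must be done with some care. Altogether this is routine once the setup is in place, and the essential content is the single application of the Łojasiewicz inequality.
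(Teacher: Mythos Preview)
Your approach is correct and is precisely what the paper indicates: the lemma is stated there without proof, merely as a known consequence of the \L ojasiewicz inequality with references to \cite{Lo59} and \cite{Ma66}. One small simplification of your last step: the passage from $K=\ol B$ back to all of $\R^n$ is immediate and needs no ``intermediate shells'' or reruns, since for $x\notin \ol B\supset U\cup V$ one has $x\in\R^n\setminus(U\cup V)$, hence $g(x)=0$ and the inequality reads $0\le C\cdot 0$.
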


We define the presheaf $\Cinft[\Msa]$ by $\Op_{\Msa} \ni U \mapsto \Cinft[M](U)$ where $\Cinft[M](U)$ is the subspace of $\Cinf(U)$ consisting of tempered $\mathrm{C}^\infty$-functions. We also consider the presheaf $\Dbt_{\Msa}$ which associates to a subanalytic open subset $U$ of $M$ the image $\Dbt_M(U)$ 
of the restriction morphism $\sect(M;\Db_M)\to\sect(U;\Db_M)$. The space $\Dbt_M(U)$ is called the space of {\em tempered distributions} on $U$.

\begin{proposition}
The presheaves  $U\mapsto \Cinft[M](U)$ and $U\mapsto \Dbt_M(U)$ are sheaves on  $\Msa$.
\end{proposition}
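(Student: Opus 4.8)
The plan is to verify the sheaf condition using Proposition~\ref{prop:MVsa1}: it suffices to check that each presheaf takes $\emptyset$ to $0$ and that for any pair $(U_1,U_2)$ in $\Op_\Msa$ the associated Mayer--Vietoris sequence
\eqn
0\to F(U_1\cup U_2)\to F(U_1)\oplus F(U_2)\to F(U_1\cap U_2)
\eneqn
is exact, where $F$ is either $\Cinft[M]$ or $\Dbt_M$. Vanishing on $\emptyset$ is immediate. Left exactness at the first two spots is the separation axiom and follows purely from the fact that $\Cinf$ and $\Db_M$ are themselves sheaves: a tempered function (resp.\ tempered distribution) that restricts to $0$ on $U_1$ and on $U_2$ is already $0$ as a $\mathrm{C}^\infty$-function (resp.\ distribution) on $U_1\cup U_2$. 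So the content is the gluing step: given $f_i\in F(U_i)$ with $f_1|_{U_1\cap U_2}=f_2|_{U_1\cap U_2}$, the sheaf property of $\Cinf$ (resp.\ $\Db_M$) produces a unique $f\in\Cinf(U_1\cup U_2)$ (resp.\ $\Db_{U_1\cup U_2}$) restricting to the $f_i$; one must show $f$ lies in the tempered subspace.

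For $\Cinft[M]$, the point is local on $M$, so we may work in a coordinate chart and, after shrinking, assume $U_1,U_2$ are relatively compact subanalytic open subsets of $\R^n$ and that we only need to control growth at a fixed point $p$ with a fixed compact neighbourhood $K$. For a multi-index $\alpha$, $\partial^\alpha f$ agrees with $\partial^\alpha f_1$ on $U_1$ and with $\partial^\alpha f_2$ on $U_2$, so on $K\cap(U_1\cup U_2)$ we have
\eqn
\dist(x,K\setminus(U_1\cup U_2))^{M}\,|\partial^\alpha f(x)|
\le \dist\big(x,\R^n\setminus(U_1\cup U_2)\big)^{M}\,\big(|\partial^\alpha f_1(x)|+|\partial^\alpha f_2(x)|\big),
\eneqn
and then Lemma~\ref{th:Loj1} bounds $\dist(x,\R^n\setminus(U_1\cup U_2))^{M}$ by a constant times $\big(\dist(x,\R^n\setminus U_1)+\dist(x,\R^n\setminus U_2)\big)^{M'}$ for suitable exponents; expanding and distributing the powers, each term is controlled by the polynomial-growth estimate for $f_1$ on $U_1$ or for $f_2$ on $U_2$ (noting $\dist(x,\R^n\setminus U_i)\ge\dist(x,K\setminus U_i)$ after possibly shrinking $K$). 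Hence every derivative of $f$ has polynomial growth at $p$, so $f$ is tempered. This establishes the sheaf property of $\Cinft[M]$.

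For $\Dbt_M$, the argument is more direct from the definition: $\Dbt_M(U)$ is by construction the image of the global sections $\sect(M;\Db_M)$ under restriction, i.e.\ a tempered distribution on $U$ is the restriction of a globally defined distribution. Given $u_i\in\Dbt_M(U_i)$ agreeing on $U_1\cap U_2$, glue them to $u\in\Db_{U_1\cup U_2}(U_1\cup U_2)$; one must exhibit a global extension of $u$ to $M$. Here I would invoke that subanalytic open subsets admit extension of tempered distributions — concretely, pick globally defined $v_i\in\sect(M;\Db_M)$ with $v_i|_{U_i}=u_i$, choose a subanalytic partition-of-unity-type cutoff adapted to $U_1,U_2$ (available since $U_1,U_2$ are subanalytic), and patch the $v_i$ into a single $v\in\sect(M;\Db_M)$ with $v|_{U_1\cup U_2}=u$; this uses that multiplication by a smooth subanalytic cutoff preserves temperedness and the analogue of Lemma~\ref{th:Loj1} to control the overlap. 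I expect the distribution case to be the main obstacle, precisely because one cannot argue pointwise as with $\mathrm{C}^\infty$-functions and must instead produce a genuine global representative; the key technical input is that the space of tempered distributions behaves well under the subanalytic cutoffs, which is itself a consequence of Lojasiewicz's inequality as recorded above. Both verifications together, via Proposition~\ref{prop:MVsa1}, give the proposition.
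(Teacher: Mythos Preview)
Your approach matches the paper's: reduce to the two-set Mayer--Vietoris criterion of Proposition~\ref{prop:MVsa1} and control the gluing step via the Lojasiewicz-type distance inequality of Lemma~\ref{th:Loj1}. The paper's proof is in fact just a one-line invocation of these two ingredients, so you have essentially unpacked it correctly (modulo some easily repaired imprecisions in the displayed estimate---the inequality between $\dist(x,K\setminus\cdot)$ and $\dist(x,\R^n\setminus\cdot)$ goes the other way, and the ``expand and distribute'' step is cleaner as a case split on which of $\dist(x,\R^n\setminus U_i)$ dominates).
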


\begin{proof}
This follows from Lemma \ref{th:Loj1} and Proposition \ref{prop:MVsa1}.
\end{proof}

We have the following natural morphisms (see \cite[p.122]{KS01})
\eq
&&\eim{\rhosa}\Cinf[M]\to\Cinft[\Msa]\to\oim{\rhosa}\Cinf[M]
\eneq
and isomorphisms
\eq\label{eq:opbCtm}
&&\opb{\rhosa}\eim{\rhosa}\Cinf[M]\isoto\opb{\rhosa}\Cinft[\Msa] \isoto\opb{\rhosa}\oim{\rhosa}\Cinf[M]
\isoto\Cinf[M],\quad\opb{\rhosa}\Dbt_{\Msa}\simeq\Db_M.
\eneq

\subsection{Tempered functions and distributions: complements}

In this subsection, we establish some complementary results concerning tempered functions and distributions. All manifolds $M_1$, $M_2$ etc. are real analytic.

Set for short $M_{ij}=M_i\times M_j$, $M_{123}=M_1\times M_2\times M_3$ and 
denote by $p_i$ the $i$-th projection defined on $M_{ij}$ or on $M_{123}$ and by $p_{ij}$ the $(i,j)$ projection.

Recall that if  $K$ is a closed subanalytic subset of $M_1$, then, a function $f \colon K \to M_2$ is subanalytic if its graph  $\Gamma_f$ is subanalytic in $M_{12}$.

\begin{lemma} \label{lem:compsub} 
Let $M_1$, $M_2$ and $M_3$ be real analytic manifolds, $K$ be a closed subanalytic subset of $M_1$, $f \colon K \to M_2$ and $g\colon M_2 \to M_3$ be continuous subanalytic functions. Then $g \circ f$ is a subanalytic function.
\end{lemma}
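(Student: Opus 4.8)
The plan is to realize the graph $\Gamma_{g\circ f}\subset M_{13}$ as the image of a subanalytic set under a proper map, so that the stability of subanalytic sets under proper direct images (Gabrielov's theorem) applies. First I would form the fiber product description
\[
\Gamma_{g\circ f}=p_{13}\bigl(\Gamma_f\times_{M_2}\Gamma_g\bigr)
=p_{13}\Bigl(\bigl(\Gamma_f\times M_3\bigr)\cap\bigl(M_1\times\Gamma_g\bigr)\Bigr)\subset M_{13},
\]
where $p_{13}\colon M_{123}\to M_{13}$ is the projection. Both $\Gamma_f\times M_3$ (as $\Gamma_f$ is subanalytic in $M_{12}$ and subanalyticity is stable under products with a manifold) and $M_1\times\Gamma_g$ (as $\Gamma_g$ is subanalytic in $M_{23}$) are subanalytic subsets of $M_{123}$, hence so is their intersection $Z\eqdot(\Gamma_f\times M_3)\cap(M_1\times\Gamma_g)$.

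The key point is then to check that $p_{13}\vert_{Z}$ is proper onto its image, so that $p_{13}(Z)$ is subanalytic in $M_{13}$. Here is where continuity and the hypothesis that $K$ is \emph{closed} are used: a point of $Z$ has the form $(x,f(x),g(f(x)))$ with $x\in K$, so $Z$ is exactly the graph of the continuous map $K\to M_2\times M_3$, $x\mapsto(f(x),g(f(x)))$; since $K$ is closed in $M_1$ and this map is continuous, $Z$ is closed in $M_{123}$, and the restriction of the projection $p_{13}$ to $Z$ is injective with closed graph, hence a homeomorphism onto $p_{13}(Z)$, and in particular proper. Alternatively, and perhaps more cleanly, I would factor through $p_{12}$: note $Z$ maps homeomorphically onto $\Gamma_f\subset M_{12}$ via $p_{12}$ (again because $g$ is a genuine map, so the $M_3$-coordinate on $Z$ is determined), and observe that $\Gamma_{g\circ f}$ is the image of $\Gamma_f$ under the continuous subanalytic map $(\id_{M_1}\times g)\colon M_{12}\to M_{13}$ restricted to $\Gamma_f$; this restriction is proper because $\Gamma_f$ is closed and the map $x'\mapsto(x',g(\cdot))$ is continuous, so $\im$ is closed and the map is proper onto it. Either way, Gabrielov's theorem on proper images of subanalytic sets gives that $\Gamma_{g\circ f}$ is subanalytic, i.e.\ $g\circ f$ is subanalytic.

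The main obstacle is the properness verification: subanalyticity is \emph{not} preserved under arbitrary (non-proper) projections, so one must genuinely exploit that $g$ is defined and continuous on all of $M_2$ (making the extra coordinates functionally determined on $Z$) together with the closedness of $K$, to guarantee that the relevant projection is proper. Once properness is in hand, everything else—stability of subanalytic sets under products, finite intersections, and proper direct images—is standard from the theory of subanalytic sets (cf.\ the references \cite{Lo59}, \cite{Ma66}, \cite{BM88} already cited). A minor bookkeeping point to handle is that $f$ takes values in a manifold $M_2$ rather than $\R$; this is dispatched by working locally in charts on $M_2$ and $M_3$, since subanalyticity is a local condition on the target.
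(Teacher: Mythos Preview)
Your approach is essentially identical to the paper's: your set $Z=(\Gamma_f\times M_3)\cap(M_1\times\Gamma_g)$ is exactly the paper's $\opb{p_{12}}(\Gamma_f)\cap \opb{p_{23}}(\Gamma_g)$, and both arguments conclude by projecting via $p_{13}$ after observing that this projection is proper on the closed subanalytic set $Z$. The paper simply asserts the properness whereas you spell it out (your phrase ``injective with closed graph, hence a homeomorphism onto $p_{13}(Z)$, and in particular proper'' is a little loose---what you actually use is that $Z$ is the graph over the closed set $K$ of a continuous map, making $p_{13}\vert_Z$ a closed embedding); the final remark about passing to local charts on $M_2$ and $M_3$ is unnecessary, as the paper works globally.
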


\begin{proof}
By assumption  $\Gamma_f$ is a closed subanalytic  subset of $M_{12}$
and $\Gamma_g$  is a closed subanalytic subset of $M_{23}$. The set $\opb{p}_{23}(\Gamma_g) \cap \opb{p}_{12}(\Gamma_f)$ is subanalytic and closed in $M_{123}$. Since 
$p_{13}$ is proper over $\opb{p}_{23}(\Gamma_g) \cap \opb{p}_{12}(\Gamma_f)$, the set
\eqn
\Gamma_{g \circ f} = p_{13}(\opb{p}_{23}(\Gamma_g) \cap \opb{p}_{12}(\Gamma_f)).
\eneqn

is subanalytic in $M_{13}$.
\end{proof}

We recall the following fact.

\begin{lemma}[{\cite[Remark 3.11]{BM88}}] \label{lem:funsub} 
Let $A$ be a subanalytic subset of $\R^n$. Then, the function $\R^n \to \R$, $x \mapsto \dist(x,A)$ is subanalytic.
\end{lemma}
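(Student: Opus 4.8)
The plan is to exhibit the graph $\Gamma_f$ of $f\colon x\mapsto\dist(x,A)$ as a set built from $A$ by operations that preserve subanalyticity: passing to the closure, forming a product with a semialgebraic set, taking a proper direct image, and taking a topological frontier. First I reduce to the case where $A$ is closed and nonempty: one has $\dist(x,A)=\dist(x,\ol A)$, the closure of a subanalytic set is subanalytic, and the hypothesis that $f$ is $\R$-valued forces $A\neq\emptyset$.

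The key step is to show that the epigraph of $f$ is subanalytic. Consider
\[
Z=\set{(x,y,t)\in\R^n\times\R^n\times\R}{y\in A,\ t\ge0,\ |x-y|^2\le t^2},
\]
a closed subanalytic subset of $\R^{2n+1}$ (it is the intersection of $\R^n\times A\times\R$ with a semialgebraic set), and let $p\colon\R^n\times\R^n\times\R\to\R^n\times\R$ be $(x,y,t)\mapsto(x,t)$. The restriction $p|_Z$ is proper: for $(x,y,t)\in Z$ with $(x,t)\in\ol B(0,R)$ one has $0\le t\le R$ and $|y|\le|x|+t\le 2R$, so $p^{-1}(\ol B(0,R))\cap Z$ is closed and bounded, hence compact. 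Since $Z$ is closed, it follows that $S\seteq p(Z)$ is subanalytic. Finally, because $A$ is closed and nonempty the infimum defining $\dist(x,A)$ is attained for each $x$, so $S=\set{(x,t)}{\dist(x,A)\le t}$ is precisely the epigraph of $f$.

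To pass from the epigraph to the graph, note that $f$ is $1$-Lipschitz, hence continuous; therefore $S$ is closed, its interior is $\set{(x,t)}{\dist(x,A)<t}$, and its frontier $\ol S\setminus\Int S$ equals $\set{(x,t)}{\dist(x,A)=t}=\Gamma_f$. The frontier of a subanalytic set is subanalytic \cite{BM88}, so $\Gamma_f$ is subanalytic and $f$ is a subanalytic function.

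The only non-formal point is the subanalyticity of the epigraph $S$, and the heart of that is the properness of $p|_Z$, which is what licenses the use of stability of subanalytic sets under proper direct images. The reductions before it (closure of a subanalytic set is subanalytic, products with semialgebraic sets) and the final step (frontier of a subanalytic set is subanalytic) are standard stability properties of subanalytic sets, so I do not expect any obstacle there.
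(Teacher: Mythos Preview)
Your argument is correct. Note, however, that the paper does not actually prove this lemma: it is simply quoted from \cite[Remark~3.11]{BM88}, so there is no proof in the paper to compare yours against. The route you take---writing the epigraph of $x\mapsto\dist(x,A)$ as the image of a closed subanalytic set under a proper linear projection, and then recovering the graph as the topological boundary---is the standard one and is essentially what is indicated in the cited reference.
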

Here, $\dist$ is the Euclidian distance.

The following Proposition is probably well-known but we have no reference for it. So, we provide it with a proof.
\begin{proposition}\label{lem:precomposition}
Let $M$ and $N$ be two real analytic manifolds, let $f\cl M \to N$ be a real analytic map, let $V$ be a subanalytic open subset of $N$ and set $U=f^{-1}(V)$. Let $\psi$ be a smooth function with polynomial growth on $V$. Then $\psi \circ f$ has polynomial growth on $U$. 
\end{proposition}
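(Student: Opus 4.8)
The plan is to transport the polynomial growth of $\psi$ from $V$ to $U$ through $f$, the essential tool being Lojasiewicz's inequality. First I would observe that it suffices to verify polynomial growth of $\psi\circ f$ at every point $p$ of the topological boundary $\partial U\seteq\overline U\setminus U$: at a point $p\in U$ one takes a compact neighbourhood $K\subset U$, so $K\setminus U=\emptyset$ and the left-hand side of \eqref{eq:moderate} vanishes, and at a point $p\notin\overline U$ one takes $K$ with $K\cap U=\emptyset$, again giving $0$. So fix $p\in\partial U$ and set $q=f(p)$. Since $f$ is continuous and $U=f^{-1}(V)$, we have $q\notin V$, while $q\in\overline V$ because $p$ is a limit of points of $U$; hence $q\in\overline V\setminus V$.

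Next I would use polynomial growth of $\psi$ at $q$ (Definition~\ref{def:polgrowth}): there are local coordinates on a chart $W\ni q$, a compact neighbourhood $L_0\subset W$ of $q$ and an integer $N>0$ with $\sup_{y\in L_0\cap V}\bigl(\dist(y,L_0\setminus V)^N\vert\psi(y)\vert\bigr)<\infty$. After shrinking $L_0$ to a closed coordinate ball (why this is legitimate is discussed in the last paragraph) I may assume $L\seteq L_0$ is a compact \emph{subanalytic} neighbourhood of $q$. I then pick local coordinates on a chart around $p$ together with a compact subanalytic neighbourhood $K$ of $p$, of diameter $\le 1$ and with $f(K)\subset L$, and introduce on $K$ the functions
\[
g(x)=\dist(x,K\setminus U),\qquad h(x)=\dist\bigl(f(x),L\setminus V\bigr).
\]
Both are continuous, hence have compact graph. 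Here $g$ is subanalytic by Lemma~\ref{lem:funsub}; and $h$ is subanalytic because $f$ is real analytic (so $\Gamma_f$ is subanalytic, and $K\setminus U$ is subanalytic, being the first projection of $\Gamma_f\cap\bigl(K\times(N\setminus V)\bigr)$, a subanalytic set contained in the compact set $\Gamma_f\cap(K\times L)$), $L\setminus V$ is subanalytic and closed, $y\mapsto\dist(y,L\setminus V)$ is subanalytic by Lemma~\ref{lem:funsub}, and one composes using Lemma~\ref{lem:compsub}.

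The crux is the inclusion $h^{-1}(0)\subset g^{-1}(0)$: if $h(x)=0$ then $f(x)\in\overline{L\setminus V}\subset N\setminus V$ (the last inclusion since $V$ is open), so $f(x)\notin V$ and $x\in K\setminus U=g^{-1}(0)$. Lojasiewicz's inequality then yields $c,r>0$ with $h(x)\ge c\,g(x)^r$ for all $x\in K$. For $x\in K\cap U$ we have $f(x)\in L_0\cap V$ and $h(x)>0$, so the estimate for $\psi$ gives $\vert\psi(f(x))\vert\le C_0\,h(x)^{-N}\le C_0\,c^{-N}g(x)^{-rN}$, i.e.\ $g(x)^{rN}\vert\psi(f(x))\vert$ is bounded on $K\cap U$. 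Taking an integer $N'\ge rN$ and using $g\le 1$ on $K$ then gives $\sup_{x\in K\cap U}\bigl(\dist(x,K\setminus U)^{N'}\vert\psi(f(x))\vert\bigr)<\infty$, which is exactly polynomial growth of $\psi\circ f$ at $p$.

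The step I expect to be the main obstacle is the seemingly innocuous reduction to a \emph{subanalytic} neighbourhood $L$ of $q$: Definition~\ref{def:polgrowth} only provides a compact neighbourhood $L_0$, and shrinking $L_0$ \emph{increases} $\dist(\,\cdot\,,L_0\setminus V)$, so the estimate is not automatically inherited by a smaller set. The remedy is the elementary remark that, since $q\in L_0\setminus V$, for $y$ in a small enough ball $\overline{B(q,\rho)}$ the point of $L_0\setminus V$ nearest to $y$ lies within distance $\rho$ of $q$, hence inside $L\seteq\overline{B(q,2\rho)}\subset\Int L_0$; therefore $\dist(y,L\setminus V)=\dist(y,L_0\setminus V)$ on $\overline{B(q,\rho)}$, and it then suffices to arrange $f(K)\subset B(q,\rho)$, which is possible by continuity of $f$ at $p$. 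The remaining points — compactness of the graphs of $g$ and $h$, and the passage from the real exponent $rN$ to an integer — are routine.
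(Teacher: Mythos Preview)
Your proof is correct and follows essentially the same route as the paper's: reduce to $p\in\partial U$, pick subanalytic compact neighbourhoods on source and target, verify the zero-set inclusion $h^{-1}(0)\subset g^{-1}(0)$, apply Lojasiewicz's inequality, and pass from the real exponent $rN$ to an integer using that the distance is $\le 1$. The one difference is that the paper simply asserts the existence of a ``sufficiently small subanalytic compact neighbourhood $K'$'' of $f(p)$ for which the growth estimate holds, whereas you isolate this shrinking step as the main obstacle and justify it explicitly via the nested-balls argument---a welcome clarification of a point the paper glosses over.
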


\begin{proof}
The preimage of a subanalytic set by a real analytic map is subanalytic, thus $U$ is subanalytic in $M$. Let $p \in M$. In view of Definition \ref{def:polgrowth} and Remark \ref{rem:polyimptem}, we can assume that $p \in \partial U$. There is a coordinates system  $(y_1,\ldots,y_m)$ around $f(p)$ as well as a sufficiently small subanalytic compact neighbourhood $K^\prime$ of $f(p)$ and a non-negative integer $N^\prime$ such that
\begin{equation*}
\sup\limits_{y\in K^\prime\cap V}\big(\dist(y,K^\prime\setminus V)^{N^\prime}\vert \psi(y)\vert\big)
<\infty.
\end{equation*}
There is a coordinates system $(x_1,\ldots,x_m)$ around $p$ and a sufficiently small subanalytic compact neighbourhood $K$ of $p$ such that $\psi(K) \subset K^\prime$. Shrinking $K$ and $K^\prime$ if necessary, we can further assume that 

 $\dist(x,K\setminus U) \leq 1$  and $\dist(f(x),K^\prime \setminus V) \leq 1$ for $x\in K$.

The function $f:K \to N$ is subanalytic in $M \times N$. It follows from Lemmas~\ref{lem:funsub} and~\ref{lem:compsub} that the continuous functions on $K$, $\dist(x,K\setminus U)$ and $\dist(f(x),K^\prime \setminus V)$ are subanalytic.

Moreover,  $\dist(f(x),K^\prime \setminus V)=0$ implies $f(x)\notin V$ hence $x\notin U$ and thus $\dist(x,K\setminus U)=0$. 
We deduce from the Lojaciewicz's inequality that there exists $C, \, \alpha \in \R_+^\ast$ such that for every $x \in K$
\begin{equation*}
\dist(x,K\setminus U)^\alpha\leq C \,\dist(f(x),K^\prime \setminus V).
\end{equation*} 
There is a non-negative integer $N \geq \max(\alpha N^\prime, N^\prime)$ such that for every $x \in K \cap U$
\begin{equation*}
\dist(x,K\setminus U)^N |\psi \circ f(x)| \leq \dist(x,K\setminus U)^{\alpha N^\prime} |\psi\circ f(x)| \leq C \,\dist(f(x),K^\prime \setminus V)^{N^\prime} |\psi\circ f(x)|
\end{equation*}
which proves that $\sup\limits_{x\in K\cap U}\big(\dist(x,K\setminus U)^N\vert \psi\circ f(x)\vert \big)<\infty$.
\end{proof}

\begin{corollary}\label{cor:tempprecompo}
Let $M$ and $N$ be two real analytic manifolds, let $f\cl M \to N$ be a real analytic map, let $V$ be a subanalytic open subset of $N$ and set $U=f^{-1}(V)$. Let $\psi$ be tempered smooth function on $V$. Then $\psi \circ f$ is a tempered smooth function on $U$. 
\end{corollary}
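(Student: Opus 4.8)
The plan is to deduce this from Proposition \ref{lem:precomposition} simply by differentiating. Since, by definition, a smooth function is tempered exactly when \emph{all} of its partial derivatives have polynomial growth, and since $\psi\circ f$ is smooth (being the composite of a smooth function and a $\mathrm C^\infty$ map), it suffices to show that $\partial^\alpha(\psi\circ f)$ has polynomial growth on $U$ for every multi-index $\alpha$; the case $\alpha=0$ is exactly Proposition \ref{lem:precomposition}.

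Fix $\alpha$ and a point $p\in M$; by Remark \ref{rem:polyimptem}(i) we may assume $p\in\partial U$. I would choose local coordinates $(x_1,\dots,x_m)$ near $p$ and $(y_1,\dots,y_n)$ near $f(p)$, write $f=(f_1,\dots,f_n)$ in these coordinates, and apply the multivariate chain rule (Fa\`a di Bruno's formula). This expresses $\partial^\alpha(\psi\circ f)$ as a finite $\R$-linear combination of functions of the form $\bigl((\partial^\beta\psi)\circ f\bigr)\cdot\prod_{l}\partial^{\gamma_l}f_{k_l}$, where $1\le|\beta|\le|\alpha|$ and the $\gamma_l$ are non-zero multi-indices. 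Each $\partial^\beta\psi$ is a smooth function with polynomial growth on $V$ because $\psi$ is tempered, so $(\partial^\beta\psi)\circ f$ has polynomial growth on $U$ by Proposition \ref{lem:precomposition}; and each $\partial^{\gamma_l}f_{k_l}$ is of class $\mathrm C^\infty$ on $M$ since $f$ is real analytic, hence bounded on a small compact neighbourhood of $p$.

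It then remains to combine these facts. As there are only finitely many terms, one can fix a single sufficiently small compact neighbourhood $K$ of $p$ on which every factor $\partial^{\gamma_l}f_{k_l}$ that occurs is bounded, on which each $(\partial^\beta\psi)\circ f$ satisfies an estimate of type \eqref{eq:moderate}, and which is shrunk so that $\dist(x,K\setminus U)\le1$ on $K$ (as in the proof of Proposition \ref{lem:precomposition}); taking $N$ larger than all the exponents that appear then yields $\sup_{x\in K\cap U}\bigl(\dist(x,K\setminus U)^N\,\vert\partial^\alpha(\psi\circ f)(x)\vert\bigr)<\infty$. Since $p$ and $\alpha$ are arbitrary, this shows $\psi\circ f$ is tempered on $U$.

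The argument has no essential obstacle: the mathematical content is entirely contained in Proposition \ref{lem:precomposition}, and the only point needing a little care is the bookkeeping in the last step, i.e.\ the (standard) observation that the class of smooth functions of polynomial growth at $p$ is stable under finite sums, finite products, and multiplication by locally bounded functions, once one works with a common compact neighbourhood $K$ small enough that distances to $K\setminus U$ are $\le1$.
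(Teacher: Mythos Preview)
Your argument is correct and is precisely the intended one: the paper states this result as a corollary without proof, and your derivation via the chain rule---reducing to Proposition~\ref{lem:precomposition} applied to each $\partial^\beta\psi$ and using that the partial derivatives of $f$ are locally bounded---is the standard way to unpack it.
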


Finally, there is the following important result concerning $\Dbt_{Msa}$. This is essentially a corollary of Lemma 2.5.7 of \cite{KS16} but it has never been stated explicitly and due to its importance we provide a detailed proof.

Consider a morphism of real analytic manifold $f\cl M\to N$. Recall the natural isomorphism \cite{KS16}*{Th.~2.5.6}
\eq\label{eq:KS256}
&&\Dbtv_\Msa\ltens[\shd_\Msa]\shd_{\Msa\to\Nsa}\isoto\epb{f}\Dbtv_\Nsa.
\eneq
Recall that $\Dbtv_\Msa$ is the tensor product of  $\Dbt_\Msa$ with the sheaf of analytic densities (differential form of higher degree tensorized with the orientation sheaf).

\begin{proposition}\label{prop:Dbinvariance}
Consider a morphism of real analytic manifolds $f\cl M\to N$ and let $U \in \Op_{\Msa}$ and $V \in \Op_{\Nsa}$. Assume that $f$ induces an isomorphism $U \isoto V$. Then 
\banum
\item
$f\vert_U$ induces an isomorphism of sites $\Umsa \simeq\Vnsa$,
\item
isomorphism~\eqref{eq:KS256} induces an isomorphism 
\eq\label{eq:KS256b}
\oim{f\vert_U}\Dbt_{\Msa}\vert_\Umsa &\simeq &\Dbt_{\Nsa}\vert_\Vnsa.  
\eneq
\eanum
\end{proposition}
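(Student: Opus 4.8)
The plan is to prove (a) directly — by checking that $f\vert_U$ and its inverse carry (relatively compact) subanalytic open subsets to (relatively compact) subanalytic open subsets and match up the covering families — and then to deduce (b) from (a) together with the isomorphism~\eqref{eq:KS256}, restricted to the locus along which $f$ is \'etale. For (a): let $W\in\Op_{\Umsa}$, i.e.\ $W$ is an open subanalytic subset of $M$ contained in $U$. Then $f(W)$ is open in $V$ because $f\vert_U$ is a homeomorphism, and it is subanalytic in $N$: writing $\Gamma_f\subset M\times N$ for the graph of $f$, which is a closed subanalytic subset since $f$ is real analytic, the set $\Gamma_f\cap(W\times N)$ is subanalytic in $M\times N$ and relatively compact there — it is contained in $\overline W\times f(\overline W)$, which is compact since $W\subset U$ and $U$ is relatively compact — so its image $f(W)$ under the projection to $N$ is subanalytic, being the projection of a relatively compact subanalytic set (cf.~\cite{BM88}), and $f(W)$ is itself relatively compact in $N$. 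Conversely, for $W'\in\Op_{\Vnsa}$ the set $(f\vert_U)^{-1}(W')=U\cap f^{-1}(W')$ is an open subanalytic subset of $M$ contained in $U$, because the preimage of a subanalytic set by a real analytic map is subanalytic (as used already in the proof of Proposition~\ref{lem:precomposition}) and $U$ itself is subanalytic. These two assignments are mutually inverse since $f\vert_U$ is a bijection, and both preserve inclusions and commute with finite unions, hence carry covering families to covering families; thus $f\vert_U$ induces an isomorphism of sites $\Umsa\simeq\Vnsa$.

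For (b), note first that since $f\vert_U$ is an isomorphism of sites, the functors $\oim{f\vert_U}$ and $\opb{(f\vert_U)}$ are mutually quasi-inverse equivalences of the associated categories of sheaves; hence \eqref{eq:KS256b} is equivalent to a canonical isomorphism $\Dbt_{\Msa}\vert_\Umsa\simeq\opb{(f\vert_U)}\bigl(\Dbt_{\Nsa}\vert_\Vnsa\bigr)$. I would obtain the latter by restricting isomorphism~\eqref{eq:KS256} of \cite{KS16} to the open set $U$, along which $f$ is \'etale: $df$ is everywhere invertible on $U$ because $f\vert_U$ is a diffeomorphism, and in particular $\dim M=\dim N$, so the derived tensor product in~\eqref{eq:KS256} reduces to an ordinary one there. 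Over an \'etale locus the transfer bimodule $\shd_{\Msa\to\Nsa}$ is canonically isomorphic to $\shd_{\Msa}$ as a left $\shd_{\Msa}$-module, its right $\opb f\shd_{\Nsa}$-action being transported from the left action through the canonical isomorphism $\shd_{\Msa}\vert_\Umsa\simeq\opb{(f\vert_U)}(\shd_{\Nsa}\vert_\Vnsa)$; and $\epb f$ coincides with $\opb f$ over $U$, since $f$ is there a local isomorphism. Hence the restriction of \eqref{eq:KS256} to $\Umsa$ reads $\Dbtv_{\Msa}\vert_\Umsa\simeq\opb{(f\vert_U)}\bigl(\Dbtv_{\Nsa}\vert_\Vnsa\bigr)$. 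Finally, since $f\vert_U$ is \'etale the sheaf of analytic densities on $M$ is, over $U$, the inverse image of the sheaf of analytic densities on $N$; this sheaf being invertible, tensoring both sides by its inverse cancels the density factor and gives $\Dbt_{\Msa}\vert_\Umsa\simeq\opb{(f\vert_U)}\bigl(\Dbt_{\Nsa}\vert_\Vnsa\bigr)$. Applying $\oim{f\vert_U}$ then yields~\eqref{eq:KS256b}.

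The substance of the argument is in (b): one has to distil the elementary invariance of $\Dbt$ from the $\shd$-module-theoretic isomorphism~\eqref{eq:KS256} (Lemma~2.5.7 of \cite{KS16}), which forces one to unwind the transfer bimodule $\shd_{\Msa\to\Nsa}$ and the functor $\epb f$ over the \'etale locus and to verify that the density and orientation twists cancel. A more hands-on route — pulling tempered distributions back along the diffeomorphism $f\vert_U\colon W\isoto f(W)$ — looks awkward, since $f$ is a priori uncontrolled near $\partial U$, so one cannot readily compare $\dist(\,\cdot\,,M\setminus W)$ with $\dist\bigl(f(\,\cdot\,),N\setminus f(W)\bigr)$; this is exactly why the proof is routed through~\eqref{eq:KS256}.
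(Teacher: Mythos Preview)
Your argument for (a) is correct and fleshes out what the paper dismisses as ``clear''. For (b) your approach is also correct but takes a genuinely different route from the paper's. The paper passes to the ind-sheaf framework and invokes Lemma~2.5.7 of \cite{KS16}, which already gives $\rihom(\C_U,\Dbt_M)\simeq\epb{f}\rihom(\C_V,\Dbt_N)$ with the density twist removed; from this it reads off the section-level isomorphism $\RHom(\C_U,\Dbt_M)\simeq\RHom(\C_V,\Dbt_N)$ by a short chain of adjunctions, the key computation being $\reim{f}\C_U\simeq\C_V$. You instead restrict~\eqref{eq:KS256} itself (Theorem~2.5.6 of \cite{KS16}) to $\Umsa$ at the sheaf level and simplify the transfer bimodule, the functor $\epb{f}$, and the density twist by hand using that $f\vert_U$ is \'etale. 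Your route is more self-contained---it avoids the ind-sheaf formalism and shows explicitly how~\eqref{eq:KS256b} is ``induced'' by~\eqref{eq:KS256}---at the price of unwinding the $\shd$-module and density machinery; the paper's route is shorter because Lemma~2.5.7 has already absorbed that work. One point in your argument deserves a line of justification: the identification $(\epb{f}\Dbtv_{\Nsa})\vert_\Umsa\simeq\opb{(f\vert_U)}(\Dbtv_{\Nsa}\vert_\Vnsa)$ is \emph{not} a base-change statement (the relevant square is not Cartesian, since $f^{-1}(V)$ may strictly contain $U$); it follows instead from the factorisation $f\circ j_U=j_V\circ(f\vert_U)$, the compatibility $\epb{(g\circ h)}\simeq\epb{h}\circ\epb{g}$, and the fact that $\epb{j}=\opb{j}$ both for open immersions and for isomorphisms.
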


\begin{proof}
(a)  is clear.

\spa
(b) 
It is enough to prove that for $U'\in\Op_\Umsa$ and $V'\in\Op_\Vnsa$ with $f(U')=V'$, we have an isomorphism
\eqn
\Dbt_{\Msa}(U')&\simeq &\Dbt_{\Nsa}(V'). 
\eneqn
Changing our notations, we will prove this formula for $U$ and $V$. 

\spa
To prove this statement, we work with indsheaves. This does not cause any problem since subanalytic sheaves form a full subcategory of the category of indsheaves (see Proposition 2.4.3 of \cite{KS16} for more details). We follow the notations of \cite{KS16} for all matters regarding ind-sheaves. We denote by $\Dbt_M$  and $\Dbt_N$ the subanalytic sheaves 
$\Dbt_\Msa$  and  $\Dbt_\Nsa$ viewed as  indsheaves.
In particular, we have
\eqn
&&\Dbt_{\Msa}(U) \simeq \RHom[\Der(\icor_M)](\C_U,\Dbt_{M})
\eneqn
and similarly with  $\Dbt_\Nsa(V)$.
From Lemma 2.5.7 of \cite{KS16}, we have
\eqn
\rihom(\C_U,\Dbt_{M})&\simeq &\epb{f}\rihom(\C_V,\Dbt_{N}).
\eneqn
Then,
\eqn
\RHom[\Der(\icor_M)](\C_U,\Dbt_{M})&\simeq&\RHom[\Der(\icor_M)](\C_U,\rihom(\C_U,\Dbt_{M}))\\
&\simeq &\RHom[\Der(\icor_M)](\C_U,\epb{f}\rihom(\C_V,\Dbt_{N}))\\
&\simeq &\RHom[\Der(\icor_M)](\C_V \tens \reiim{f} \C_U,\Dbt_{N})\\
&\simeq &\RHom[\Der(\icor_M)](\C_V \tens \reim{f} \C_U ,\Dbt_{N})\\                                  
                                &\simeq &\RHom[\Der(\icor_M)](\C_V,\Dbt_{N}).
\eneqn
\end{proof}

\begin{remark}\label{rem:ringstruct}
It follows from Corollary \ref{cor:tempprecompo} that there is a morphism of sheaves of rings
\eq\label{mor:Cinftemp}
\Cinft[\Nsa] \to \oim{f}\Cinft[\Msa], \; \varphi \mapsto \varphi \circ f.  
\eneq
It follows from the commutativity of  Diagram~\eqref{diag:Dbinvariance} below
\eq\label{diag:Dbinvariance}
\xymatrix{\eim{f}\Dbv_{U} \ar[r]^-{\int_f} & \Dbv_{V}\\
          \eim{f}\Db_{U} \ar[u]^-{\cdot \otimes d\lambda}_-{\wr}          &  \Db_{V}\ar[u]_-{\cdot \otimes d\lambda}^-{\wr}\\
\oim{f_{U}} \mathcal{C}^\infty_{U} \ar[u]^-{\int(\cdot)|Jac(f)|d\lambda}  & \mathcal{C}^\infty_{V} \ar[u]_-{\int(\cdot)d\lambda} \ar[l]_-{f^\ast}          
}
\eneq
that the isomorphism~\eqref{eq:KS256b} is compatible with the morphism \eqref{mor:Cinftemp}. This shows that \eqref{mor:Cinftemp} induces an isomorphism of sheaves of rings
\eq\label{eq:KS256c}
\oim{f\vert_U}\Cinft[\Msa]\vert_\Umsa &\simeq &\Cinft[\Nsa]\vert_\Vnsa.  
\eneq
\end{remark}

\subsection{Tempered holomorphic functions}\label{subsec:temphol}

In this subsection, we construct the sheaf of tempered holomorphic functions.
Let $X$ be a complex manifold. We denote  by $X^c$ the complex conjugate manifold of $X$ and by $X_\R$ the underlying real analytic  manifold. We write $\Xsa$ for the manifold  $X_\R$ endowed with the subanalytic topology and set
\eqn
&&\D_\Xsa\eqdot\erx\shd_X.
\eneqn

Following \cite{KS01}, we define the following sheaves on $\Xsa$.
\eq
\Owx&\eqdot&\eim{\rhosa}\sho_X,\label{eq:defOt1}\\
\Otx&\eqdot&\rhom[\shd_{\Xcsa}](\Ocwx,\Dbt_{X_\R}),\label{eq:defOt2}\\
\Ox&\eqdot&\roim{\rhosa}\sho_X.\label{eq:defOt4}
\eneq
The objects $\Owx, \Otx$ and $\Ox$ belong to the category $\Derb(\D_\Xsa)$.  
The object $\Otx$ is isomorphic to the Dolbeault complex with coefficients in $\Dbt_{X_\R}$ on the subanalytic site:
\eq\label{eq:dolbeault1}
&&0\To\Dbt_{\Xsa}\To[\ol\partial]\Db_{\Xsa}^{\mathrm{t}\;(0,1)}\To[\ol\partial]\cdots
\To[\ol\partial]\Db_{\Xsa}^{\mathrm{t}\;(0,d_X)}\To0.
\eneq

One calls  $\Otx$  the 
{\em sheaf of tempered holomorphic functions}. 
We have natural morphisms in  $\Derb(\D_\Xsa)$:
\eqn
&& \Owx\to \Otx\to\Ox.
\eneqn
By \cite[Theorem 10.5]{KS96} we have the isomorphism
\eq\label{eq:defOt5}
\Otx&\simeq&\rhom[\shd_{\Xcsa}](\erx\sho_{X^c},\Cinft[\Xsa])\mbox{ in }\Derb(\D_\Xsa).
\eneq
This implies, in particular, that $\Otx$ is represented by the differential graded algebra
\eq \label{eq:dolbeault2}
&&0\To\Cinfn{\Xsa}{\mathrm{t}}\To[\ol\partial]\Cinfn{\Xsa}{\mathrm{t}\;(0,1)}\To[\ol\partial]\cdots
\To[\ol\partial]\Cinfn{\Xsa}{\mathrm{t}\;(0,d_X)}\To0.
\eneq

Consider a morphism of complex manifolds $f\cl X\to Y$ 
 First, recall the morphism of~\cite{KS01}*{Lem.~7.4.9} or~\cite{KS16}*{Cor.~3.1.4}
\eq\label{eq:KS314}
&&\shd_{\Xsa\to \Ysa}\ltens[\opb{f}\shd_\Ysa]\opb{f}\Ot[\Ysa]\to\Ot[\Ysa]
\eneq
Note that this morphism is constructed from morphism \eqref{eq:KS256}.

\begin{proposition}\label{prop:morphcompact}
Consider a morphism of complex manifolds $f\cl X\to Y$ and let $U \in \Op_{\Xsa}$ and $V \in \Op_{\Ysa}$. Assume that $f$ induces an isomorphism of complex analytic manifolds $U \isoto V$. Then 
isomorphism~\eqref{eq:KS256} induces an isomorphism 
\eq\label{eq:KS314b} 
\roim{f\vert_U}\Ot[\Xsa]\vert_\Uxsa &\simeq &\Ot[\Ysa]\vert_\Vysa.  
\eneq
\end{proposition}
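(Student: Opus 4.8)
The goal is to promote the sheaf-of-rings isomorphism~\eqref{eq:KS256c} (or its distribution-valued counterpart~\eqref{eq:KS256b}) from the subanalytic smooth/distribution level to the Dolbeault complexes computing $\Ot[\Xsa]$ and $\Ot[\Ysa]$. By Proposition~\ref{prop:Dbinvariance}(a), $f\vert_U$ is an isomorphism of sites $\Umsa\simeq\Vnsa$ (here $M=X_\R$, $N=Y_\R$), so $\roim{f\vert_U}$ restricted to objects supported on $\Umsa$ is just transport of structure along that equivalence; the content is that this transport is compatible with the $\ol\partial$-differentials and the $\shd$-module structures. First I would reduce~\eqref{eq:KS314b} to a statement about the representatives~\eqref{eq:dolbeault2}: since $\Ot[\Xsa]\vert_\Uxsa$ is represented by the complex $\Cinfn{\Xsa}{\mathrm{t}\,(0,\bullet)}\vert_\Uxsa$ and likewise for $Y$, and $\roim{f\vert_U}$ is exact on the subcategory of sheaves living on the site $\Umsa$ (being an equivalence there), it suffices to produce a quasi-isomorphism of complexes $\roim{f\vert_U}\Cinfn{\Xsa}{\mathrm{t}\,(0,\bullet)}\vert_\Uxsa\simeq\Cinfn{\Ysa}{\mathrm{t}\,(0,\bullet)}\vert_\Vysa$ compatible with $\ol\partial$.

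The next step is to identify the degree-wise isomorphisms. For each $(0,q)$ one has $\Cinfn{\Xsa}{\mathrm{t}\,(0,q)}\simeq\Cinft[\Xsa]\tens_{\Cinf[X_\R]}\shs^{0,q}$ where $\shs^{0,q}$ is the (locally free, finite rank) bundle of $(0,q)$-forms, and pullback along the biholomorphism $f\vert_U\colon U\isoto V$ identifies $f^*\shs_Y^{0,q}$ with $\shs_X^{0,q}$ over $U$. Combined with the ring isomorphism~\eqref{eq:KS256c}, this gives isomorphisms $\roim{f\vert_U}\Cinfn{\Xsa}{\mathrm{t}\,(0,q)}\vert_\Uxsa\simeq\Cinfn{\Ysa}{\mathrm{t}\,(0,q)}\vert_\Vysa$ of $\Cinft[\Nsa]\vert_\Vnsa$-modules. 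Compatibility with $\ol\partial$ is then a purely local (on sections over subanalytic opens of $V$) verification: $\ol\partial$ commutes with pullback of forms along a holomorphic map, and this is inherited by the tempered subspaces because~\eqref{mor:Cinftemp} sends $\varphi\mapsto\varphi\circ f$ and the transition maps in~\eqref{eq:KS256c} are exactly restriction along $f$. Alternatively, and more cleanly, I would argue directly on the distribution-valued Dolbeault complex~\eqref{eq:dolbeault1}: Proposition~\ref{prop:Dbinvariance}(b) already gives $\oim{f\vert_U}\Dbt_{\Msa}\vert_\Umsa\simeq\Dbt_{\Nsa}\vert_\Vnsa$, and Remark~\ref{rem:ringstruct} (Diagram~\eqref{diag:Dbinvariance}) shows this isomorphism is compatible with the action of smooth functions and hence with multiplication; since $\ol\partial$ on distributions is defined by the analytic de~Rham--type operator which commutes with the biholomorphic change of variables (the Jacobian factor being holomorphic, it is absorbed), one gets an isomorphism of the complexes~\eqref{eq:dolbeault1} for $X$ and $Y$ over $\Uxsa$ and $\Vysa$, hence~\eqref{eq:KS314b}.

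Finally one must check that the resulting isomorphism is the one induced by~\eqref{eq:KS256}, i.e.\ that it is compatible with the $\D_\Xsa$- and $\D_\Ysa$-module structures via the transfer bimodule, so that~\eqref{eq:KS314b} is an isomorphism in $\Derb(\D_\Ysa\vert_\Vysa)$ and not merely of complexes of $\cor$-modules. This is where I expect the only real friction: tracing through the identification~\eqref{eq:defOt5} of $\Ot$ with $\rhom[\shd_{\Xcsa}](\erx\sho_{X^c},\Cinft[\Xsa])$ and checking that the $\shd$-linear structure coming from~\eqref{eq:KS256} matches the one transported along the biholomorphism; concretely it amounts to the naturality of~\eqref{eq:KS256} under the isomorphism $U\isoto V$, which in turn follows from functoriality of~\eqref{eq:KS256} in $f$ together with the fact that $\epb{(f\vert_U)}$ on sheaves supported on $\Vnsa$ coincides with $\opb{(f\vert_U)}$ (up to the density twist, already handled in Proposition~\ref{prop:Dbinvariance}). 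So the proof is: (i) reduce to complexes of representatives; (ii) build degreewise isomorphisms from~\eqref{eq:KS256c} and pullback of forms; (iii) check $\ol\partial$-compatibility from the commutation of $\ol\partial$ with holomorphic pullback; (iv) upgrade to a $\shd$-module isomorphism by naturality of~\eqref{eq:KS256}. The bulk of the work — and the main obstacle — is step (iv), the bookkeeping of the $\shd$-module structure through~\eqref{eq:defOt5} and~\eqref{eq:KS314}.
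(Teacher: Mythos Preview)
Your argument is correct, but the paper takes a shorter and more intrinsic route that sidesteps precisely the step you flag as the main obstacle. Instead of passing to the Dolbeault representatives~\eqref{eq:dolbeault1} or~\eqref{eq:dolbeault2} and then checking $\ol\partial$-compatibility and $\shd$-linearity by hand, the paper works directly with the defining formula $\Ot[\Xsa]=\rhom[\D_{\Xcsa}](\Ocwx,\Dbt_\Xsa)$. Since $f\vert_U$ is a biholomorphism, the transfer morphism $\shd_{\Xsa}\to\shd_{\Xsa\to\Ysa}$ restricted to $\Uxsa$ is an isomorphism, so the $\shd$-module structures on source and target match automatically under transport along the equivalence of sites. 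One then only needs that $\Ocwx$ and $\Dbt_\Xsa$ transport to $\Ocwy$ and $\Dbt_\Ysa$; the first is immediate from the biholomorphism and the second is exactly Proposition~\ref{prop:Dbinvariance}(b). The whole proof is three lines of isomorphisms of $\rhom$'s.

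What your approach buys is an explicit, computable description of the isomorphism at the level of sections (pullback of tempered forms), which is useful later for the ring-structure statement in Remark~\ref{rem:ringstructsa}. What the paper's approach buys is that your step~(iv), the $\shd$-module bookkeeping through~\eqref{eq:defOt5} and~\eqref{eq:KS314}, simply does not arise: by never leaving the $\rhom[\shd]$ formulation, the $\shd$-linearity and the compatibility with~\eqref{eq:KS256} are built in from the start.
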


\begin{proof}
We shall use Proposition~\ref{prop:Dbinvariance}. 
 Recall first that $f\vert_U$ induces an isomorphism of sites $\Uxsa \simeq\Vysa$ and that the morphism of $\shd_{\Xsa}$-module $\shd_{\Xsa} \to \shd_{\Xsa\to \Ysa}$ restricted to $\Uxsa$ is an isomorphism. 
We have the sequence of isomorphisms
\eqn
\oim{f\vert_U}\Ot[\Xsa]\vert_\Uxsa&\simeq&\oim{f\vert_U}\rhom[\D_{\Xcsa}](\Ocwx, \Dbt_\Xsa)\vert_\Uxsa\\
&\simeq&\rhom[\D_{\Ycsa}](\Ocwy, \Dbt_\Ysa)\vert_\Vysa\\
&\simeq&\Ot[\Ysa]\vert_\Vysa.
\eneqn
\end{proof}

\begin{remark}\label{rem:ringstructsa}
The sheaf $\rmH^0(\Otx)$ is a sheaf of rings whose multipicative law is given by the multiplication of functions. Isomorphism~\eqref{eq:KS314b} induces an isomorphism of sheaves of rings:
\eq\label{eq:KS314ring} 
\oim{f\vert_U}\rmH^0(\Otx)\vert_\Uxsa &\simeq &\rmH^0(\Oty)\vert_\Vysa.  
\eneq
This follows from the isomorphism~\eqref{eq:KS256c}.
\end{remark}

The following result seems known to some specialists but there is no reference for it in the literature so we do not label it as a theorem.
\begin{conjecture}\label{claim:dolbeautvan}
Assume that $X= \C^n$ and that $U$ is a relatively compact subanalytic pseudo-convex open subset of $X$. 
Then $\rsect(U;\Otx)\in\Derb(\C)$  is concentrated in degree $0$.
\end{conjecture}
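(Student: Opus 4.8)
The plan is to identify $\rsect(U;\Otx)$ with the tempered Dolbeault complex computing the cohomology of $U$ with coefficients in tempered distributions, and then to prove the vanishing of the higher cohomology of this complex by solving the $\overline\partial$-equation with temperate estimates on subanalytic pseudo-convex domains. Concretely, by the isomorphism \eqref{eq:dolbeault1} (or equivalently the smooth version \eqref{eq:dolbeault2}) and formula \eqref{eq:rsectMsa2}, $\rsect(U;\Otx)$ is represented by the complex
\eqn
0\To\Dbt_X(U)\To[\ol\partial]\Dbt_X^{(0,1)}(U)\To[\ol\partial]\cdots\To[\ol\partial]\Dbt_X^{(0,n)}(U)\To0,
\eneqn
so the statement is equivalent to the assertion that for $q\geq 1$ every $\overline\partial$-closed tempered distributional $(0,q)$-form on $U$ is $\overline\partial$ of a tempered distributional $(0,q-1)$-form on $U$. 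First I would reduce to this analytic statement carefully, checking that the section functor on the subanalytic site really does produce this complex of global sections (this is where \eqref{eq:rsectMsa2} and the fact that $U$ itself is an object of $\Op_{\Umsa}$ are used).

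Next I would invoke the classical Hörmander $L^2$-theory for the $\overline\partial$-operator on the pseudo-convex open set $U\subset\C^n$, in a form with weights adapted to the distance to the boundary. The key point is that since $U$ is subanalytic and bounded, Lojasiewicz's inequality (Lemma~\ref{th:Loj1} and Lemma~\ref{lem:funsub}) gives two-sided polynomial comparisons between $\dist(z,\partial U)$ and any other subanalytic defining function of the boundary, so weights of the form $|\log\dist(z,\partial U)|$ or negative powers of $\dist(z,\partial U)$ are plurisubharmonic up to controllable corrections and interact well with the temperateness condition. Using Hörmander's solution with a weight that grows like a large negative power of the boundary distance, one solves $\overline\partial u=v$ with an $L^2$ bound on $u$ in terms of a (shifted) weighted $L^2$ norm of $v$; converting the $L^2$ bound back into a sup-type polynomial-growth bound via interior elliptic estimates for $\overline\partial$ (the same Cauchy-estimate mechanism as in Remark~\ref{rem:polyimptem}(ii)) shows $u$ is tempered when $v$ is. One must iterate/bootstrap over the degrees $q=n,n-1,\dots,1$, and handle the distributional (not merely smooth or $L^2$) coefficients by a regularization argument, noting that the space of tempered distributions on $U$ is exactly the image of $\sect(X_\R;\Db_X)$ restricted to $U$.

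The main obstacle is precisely this quantitative $\overline\partial$-solving with temperate estimates: Hörmander's theorem gives $L^2_{\loc}$ control, and upgrading it to polynomial growth in $1/\dist(z,\partial U)$ uniformly on $U$ requires choosing the Hörmander weight and the plurisubharmonic exhaustion of $U$ with enough care, exploiting the subanalyticity of $U$ in an essential way (a general bounded pseudo-convex domain need not admit such estimates). A clean way to organize this is: pick a subanalytic plurisubharmonic exhaustion $\varphi$ of $U$ with $\varphi\sim -\log\dist(z,\partial U)$ up to Lojasiewicz-controlled errors, solve $\overline\partial$ with weight $e^{-\varphi - N\varphi}$ for $N$ large depending on the polynomial growth order of $v$, and then use that on the subanalytic set $U$ the function $e^{-\varphi}$ is comparable to a power of the boundary distance to translate the resulting estimate into temperateness. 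I would present the reduction to the $\overline\partial$-statement in full and then cite or sketch the Hörmander-type estimate, flagging that the temperate $L^2$-to-sup passage and the subanalytic choice of weight are the technical heart of the argument.
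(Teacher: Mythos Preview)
The paper does not prove this statement. It is explicitly labeled as a \emph{Conjecture} (Conjecture~\ref{claim:dolbeautvan}), and the sentence preceding it says that ``the following result seems known to some specialists but there is no reference for it in the literature so we do not label it as a theorem.'' Throughout the rest of the paper the statement is used only as a standing hypothesis (see Remark~\ref{rem:concentration}, Remark~\ref{rem:Liouvillefort}, and the subsection on a tempered GAGA theorem). So there is no ``paper's own proof'' to compare your proposal against.

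As for the proposal itself: your reduction step is correct --- via \eqref{eq:dolbeault1} and \eqref{eq:rsectMsa2} the object $\rsect(U;\Otx)$ is indeed represented by the tempered Dolbeault complex on $U$, so the conjecture is exactly the temperate $\overline\partial$-solvability statement you write down. The strategy you outline (H\"ormander $L^2$-estimates with weights comparable to powers of the boundary distance, using subanalyticity and Lojasiewicz to control the weights, then interior elliptic estimates to pass back to polynomial growth) is the natural line of attack and is presumably what the ``specialists'' alluded to have in mind. But you should be aware that this is not a proof the paper omitted for space reasons; it is a genuinely open point in the paper, and the delicate parts you flag --- producing a plurisubharmonic weight with the right two-sided comparison to $\dist(\cdot,\partial U)$ on an arbitrary bounded subanalytic pseudo-convex domain, and the $L^2$-to-temperate bootstrap for distributional data --- are exactly where a complete argument would need real work. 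Your sketch is a reasonable roadmap, not a finished proof, and that matches the status the paper gives the statement.
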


\section{Tempered analytification}\label{section:tempered}

\subsection{$\TP$-spaces}

In this subsection, we review the notion of $\TP$-space. This a notion which is intermediate between the notion of topological space and the concept of site and will be better suited to our purpose than the notion of ringed sites. It was introduced in \cite{KS01} and further studied in \cite{PrEd10}.

\begin{definition}
\begin{nnum}
\item
A $\TP$-space $(X,  \T)$ is the data of a set $X$ together with a collection of subsets $\T$ of $X$ such that\\
(i) $\emptyset \in \T$,
(ii) $\T$ is stable by finite unions and finite intersections.
\item
A morphism of $\TP$-spaces $f\cl (X, \T_X)\to(Y,\T_Y)$ is a map $f \cl X \to Y$ 
such that for every $U \in \T_Y$, $\opb{f}(U) \in \T_X$.
\end{nnum}
\end{definition}
We have thus defined the category of $\TP$-spaces, that we denote by $\mathsf{TS}$.

The forgetful functor $for \cl \mathsf{Top} \to \mathsf{TS}$ form the category of topological spaces to the category of $\TP$-spaces admits a left adjoint $(\cdot)^{\mathrm{Top}}:\mathsf{TS} \to \mathsf{Top}$ which associate to a $\T$-space $(X,\T)$ a topological space $(X,\T^\mathrm{Top})$ where $\T^\mathrm{Top}:=\{\bigcup_{V \in \mathcal{B}} V \vert \mathcal{B} \subset \T \}$. When there is no risk of confusion we will write $X^\mathrm{Top}$ instead of $(X,\T^\mathrm{Top})$.

A $\TP$-space defines a presite, the morphisms $U\to V$ being the inclusions. We  endow it with the following Grothendieck topology: A family  $\{U_i\}_{i\in I}$ of objects of $\T$ is a covering of $U$ if for every $i \in I$, $U_i \subset U$ and there exists a finite subset $J\subset I$ such that  $\bigcup_{j\in J}U_j=U$. 

We denote this site by $X_\T$. 
A morphism of $\TP$-spaces $f\cl (X, \T_X) \To (Y,\T_Y)$ induces a morphism of sites $f \cl X_{\T_X} \to Y_{\T_Y}$, $\T_Y \ni V \mapsto \opb{f}(V) \in \T_X$. 
\begin{notation}
If there is no risk of confusion, we  will not make the distinction between a $\TP$-space $(X,\T)$ and the associated site $X_\T$. 
\end{notation}
Given  a $\TP$-space $X$ and $U\in \T$, $U$ is naturally endowed with a structure of $\TP$-space $(U,\T_U)$ by setting 
$\T_U=\{ V \subset U; V \in \T\}$. We denote by $U_{X_\T}$ the site induced by $X_\T$ on the presite $(U,\T_U)$. Hence, the coverings of $U_{X_\T}$ are those induced by the coverings in $X$.

\begin{definition} 
A site $X_\T$ (associated with a $\TP$-space) endowed with a sheaf of rings $\sho_X$ is called a ringed $\TP$-space. One defines as usual a morphism of ringed $\TP$-spaces .
\end{definition}

\subsection{The tempered analytification functor}
The aim of this subsection is to construct the tempered analytification functor.

\begin{notation} We denote by
\begin{itemize}
\item $\mathsf{Var}$ (resp. $\mathsf{Var_{sm}}$) the category of complex algebraic varieties (resp. smooth complex algebraic varieties),
\item $\mathsf{An_\C}$ the category of complex analytic spaces,
\item $(\cdot)^\an:\mathsf{Var}\to \mathsf{An}_\C$ the analytification functor,
\item $\mathsf{TRgS}$ the category of ringed $\TP$-spaces.

\item $\mathsf{TFS}$ the category whose objects are the pair $((X,\T),F)$ where $(X, \T)$ is a $\mathcal{TP}$-space and $F \in \Derb(\cor_{X_\T})$. A morphism in $\mathsf{TFS}$  is a pair $(f,f^\sharp)\cl ((X,\T_X),F) \to ((Y,\T_Y),G)$ where $f:X \to Y$ is a morphism of $\mathcal{TP}$-space and $f^\sharp \cl \opb{f}G \to F$ is a morphism in $\Derb(\cor_{X_\T})$.
\end{itemize}

In general, if there is no risk of confusion, we shall not write the functor $(\cdot)^\an$. For example, if $X$ is a smooth complex algebraic varieties, then $\Xsa$ is the subanalytic site associated with $X^\an$. 
\end{notation}

We refer the reader to \cite[Expos\'e XII]{SGA1} for a detailed study of the properties of the analytification functor.  

We recall a few classical facts concerning smooth compactifications of algebraic varieties. 

\begin{enumerate}[(i)]
\item It follows from the Nagata's compactification theorem and Hironaka's desingularization theorem that any smooth algebraic variety has a smooth algebraic compactification.

\item Compactification is not functorial but let $f: X_0 \to X_1$ be a  morphism of algebraic varieties and $j_1 \cl X_1 \to Y_1$ be a compactification of $X_1$. Then, there exists a morphism $\hat{f}:Y_0 \to Y_1$ where $j_0 \cl X_0 \to Y_0$ is a smooth compactification of $X_0$ such that the following diagram commutes
\eq\label{diag:compmap}
\xymatrix{ Y_0 \ar[r]^-{\hat f}& Y_1\\
X_0 \ar@{_{(}->}[u]^{j_0} \ar[r]^-{f} & X_1 \ar@{_{(}->}[u]_-{j_1}.
}
\eneq

This is a consequence of the following classical construction. 

Let $f \cl X_0 \to X_1$  be a morphism of algebraic varieties. Let $Y_2$ and $Y_1$ be respectively smooth compactification of $X_0$ and $X_1$. Consider $\Gamma_f$ the graph of $f$ as a subset of $Y_2 \times Y_1$ and consider its closure, in the Zariski topology, $\ol{\Gamma_f}^{\,\zar}$. Note that $\Gamma_f$ is open in $\ol{\Gamma_f}^{\,\zar}$. By Hironaka's theorem there exists a smooth algebraic variety $Y_0$ and a proper morphism $g \cl Y_0 \to \ol{\Gamma_f}^{\,\zar}$, such that $g: g^{-1}(\Gamma_f) \to \Gamma_f$ is an isomorphism. This implies that there is an open embedding $j_0: X_0 \to Y_0$. Thus $j_0$ is a smooth compactification of $X$. 

Let $p_i \cl Y_2 \times Y_1 \to Y_i$ $(i=1,\,2)$ be the projection on $Y_i$ and we still denote by $p_i$ their respective restrcition to $\ol{\Gamma_f}^{\,\zar}$. We set $\widehat{f}=p_1 \circ g$ and notice that  $\widehat{f}|_{X_0}=f$.
Then, we have the diagram
\begin{equation}\label{diag:compactification}
\xymatrix{
Y_2  & \ol{\Gamma_f}^{\,\zar} \ar[r]^-{p_1} \ar[l]_-{p_2}& Y_1\\
&  Y_0 \ar[u]^-{g}\ar[ru]_-{\widehat f}&\\
X_0 \ar@{=}[r] \ar@{_{(}->}[uu]^{j_2}& X_0 \ar@{_{(}->}[u]^{j_0}  \ar[r]^f & X_1, \ar@{_{(}->}[uu]^{j_1}\\
}
\end{equation}

\item Smooth compactifications are not unique but there is the following consequence of the construction in (ii). Consider two smooth compactifications of $X$, $j_1\cl X \hookrightarrow Y_1$ and $j_2 \cl X \hookrightarrow Y_2$.  Applying (ii) with $X_0=X_1=X$ and $f=\id$, we get the following commutative diagram

\eq\label{diag:dominationcomp}
\xymatrix{&Y_0\ar[rd]^-{q_2} \ar[ld]_-{q_1}&\\
Y_1& \ar[r]_-{j_2} \ar[l]^-{j_1} \ar[u]^-{j_0}X& Y_2
}
\eneq
where $Y_0$ is a smooth compactification of $X$, $q_1=p_1 \circ g$, $q_2=p_2 \circ g$. The morphism $q_1$  (resp. $q_2$) induces an isomorphism $q_1 \cl j_0(X) \isoto j_1(X)$ (resp. $q_2 \cl j_0(X) \isoto j_2 (X)$).
\end{enumerate}

\begin{lemma}\label{lem:indep}
Let $X$ be a smooth algebraic variety and $Y$ a smooth algebraic compactification of $X$.
\banum
\item The site $X_{Y_\sa}$ does not depend on the choice of a smooth algebraic compactification $Y$ of $X$,

\item the object $\Ot[Y_{\sa}]\vert_{X_\Ysa} \in \Derb(\cor_{X_\Ysa})$ does not depend on the choice of an algebraic compactification $Y$ of $X$.
\eanum
\end{lemma}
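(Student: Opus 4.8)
The strategy is to reduce everything to the domination diagram~\eqref{diag:dominationcomp} and then invoke Proposition~\ref{prop:morphcompact} (together with Proposition~\ref{prop:Dbinvariance}) to identify the restricted sheaves of tempered holomorphic functions under the comparison maps between two compactifications. Concretely, given two smooth algebraic compactifications $j_1\cl X\into Y_1$ and $j_2\cl X\into Y_2$, I would first produce, via construction~(ii) applied with $X_0=X_1=X$ and $f=\id$, a third smooth compactification $Y_0$ together with morphisms $q_1\cl Y_0\to Y_1$ and $q_2\cl Y_0\to Y_2$ as in~\eqref{diag:dominationcomp}, where each $q_i$ restricts to an isomorphism of complex analytic manifolds $q_i\cl j_0(X)\isoto j_i(X)$. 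It then suffices to treat the case of two compactifications related by such a dominating morphism, i.e.\ to show that if $q\cl Y_0\to Y_1$ is a morphism of smooth compactifications of $X$ inducing an isomorphism $U_0\eqdot j_0(X)\isoto U_1\eqdot j_1(X)$, then $q$ induces an isomorphism of sites $X_{Y_{0\sa}}\simeq X_{Y_{1\sa}}$ and an isomorphism $\Ot[Y_{0\sa}]\vert_{X_{Y_{0\sa}}}\simeq \Ot[Y_{1\sa}]\vert_{X_{Y_{1\sa}}}$ compatible with these identifications.

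For part~(a): since $q$ is a morphism of complex manifolds restricting to an analytic isomorphism $U_0\isoto U_1$, Proposition~\ref{prop:Dbinvariance}(a) (applied to the underlying real analytic map) gives that $q\vert_{U_0}$ induces an isomorphism of sites $U_{0\,Y_{0\sa}}\simeq U_{1\,Y_{1\sa}}$. One must observe that the site $X_{Y_\sa}$ as defined in the statement is exactly the induced site $U_{Y_\sa}$ on the $\TP$-space $(j(X),\T_{j(X)})$ in the sense of the discussion preceding Lemma~\ref{lem:indep}; so this is literally the content of Proposition~\ref{prop:Dbinvariance}(a). Hence the site is independent of the choice of $Y$.

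For part~(b): I would apply Proposition~\ref{prop:morphcompact} to the morphism $q\cl Y_0\to Y_1$ with $U=U_0$ and $V=U_1$ (these are subanalytic open subsets of $Y_0$ resp.\ $Y_1$, being the complements of the subanalytic — indeed algebraic — boundary divisors, and $q$ induces an isomorphism of complex analytic manifolds between them). This yields an isomorphism $\roim{q\vert_{U_0}}\Ot[Y_{0\sa}]\vert_{U_{0\,Y_{0\sa}}}\simeq\Ot[Y_{1\sa}]\vert_{U_{1\,Y_{1\sa}}}$ in $\Derb(\cor_{U_{1\,Y_{1\sa}}})$, which under the site identification from part~(a) is precisely the desired isomorphism of objects in $\Derb(\cor_{X_{Y_\sa}})$. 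Chaining the two comparisons $Y_1\leftarrow Y_0\rightarrow Y_2$ for arbitrary $Y_1,Y_2$ then gives a canonical isomorphism between the two restricted objects, proving independence. One should also remark that these isomorphisms are canonical and compatible with each other (a cocycle-type condition over a fourth compactification dominating $Y_0$ for two different choices), so that the resulting identification is well-defined — this follows from the functoriality built into~\eqref{eq:KS256} and construction~(ii).

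\textbf{Main obstacle.} The genuine mathematical content is entirely in Propositions~\ref{prop:Dbinvariance} and~\ref{prop:morphcompact}, which are already established; so the difficulty here is organizational rather than technical. The one point requiring a little care is verifying that $U_0$ and $U_1$ are relatively compact subanalytic open subsets of $Y_0$ and $Y_1$ respectively and that $q$ genuinely restricts to an \emph{analytic isomorphism} (not merely a bijection) between them — this is where construction~(ii), specifically the property that $g\cl g^{-1}(\Gamma_f)\to\Gamma_f$ is an isomorphism, is used. The other mild subtlety is checking that the independence is canonical, i.e.\ that the transition isomorphisms satisfy the obvious compatibility; I expect this to be routine given that all maps in sight come from the single natural transformation~\eqref{eq:KS256} and the projections in diagram~\eqref{diag:compactification}.
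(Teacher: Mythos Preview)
Your proposal is correct and follows essentially the same route as the paper: the paper's proof consists of the two one-line references ``consequence of Proposition~\ref{prop:morphcompact} and Diagram~\eqref{diag:dominationcomp}'' for each of (a) and (b), which is exactly the reduction you spell out in detail. Your version is more explicit about why $U_0,U_1$ lie in $\Op_{Y_{i\sa}}$ and about the cocycle compatibility, but the underlying argument is identical.
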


\begin{proof}
\banum
\item is a consequence of Proposition \ref{prop:morphcompact} (a) and Diagram \eqref{diag:dominationcomp}.
\item is a consequence of Proposition \ref{prop:morphcompact} (b) and Diagram \eqref{diag:dominationcomp}.
\eanum
\end{proof}
As a consequence of Lemma \ref{lem:indep} we can state
\begin{definition}
Let $X$ be a smooth algebraic variety. We define  the \textit{tempered analytification} (t-analytification for short) of $X$ to be the object of $\mathsf{TFS}$, $(X^\temp,\Otempx)$ where
\eqn
&& X^\temp \eqdot (X(\C), \, X_{Y_\sa})  \quad \textnormal{and} \quad \Otempx \eqdot\Ot[Y_\sa]|_{X_{Y_\sa}}.
\eneqn
\end{definition}

We will associate to a morphism of smooth algebraic varieties $f \cl X_0 \to X_1$ is tempered $f^\temp$ analytification. In order to do so, we need the following Lemma.

\begin{lemma}\label{lem:morwelldef}
Let $f \cl X_0 \to X_1$ be a morphism of smooth algebraic varieties. Let $U \in \Op_{X_1^\temp}$. Then $(f^\an)^{-1}(U) \in \Op_{X_0^\temp}$.
\end{lemma}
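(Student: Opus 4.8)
The plan is to reduce the statement to the standard closure properties of subanalytic sets; the only genuine point is that $f^\an$ need not extend to the chosen compactifications of $X_0$ and $X_1$, so it must first be replaced by an honest morphism of \emph{complete} algebraic varieties, as provided by the construction recalled in~\eqref{diag:compmap}.

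First I would make the assertion concrete. If $j_1\cl X_1\hookrightarrow Y_1$ is the smooth algebraic compactification used to define $X_1^\temp$, then, by definition of the induced topology, $U\in\Op_{X_1^\temp}$ means exactly that $U$ is an open subset of $X_1(\C)$ that is subanalytic in $Y_1(\C)$; and by Lemma~\ref{lem:indep}(a) it suffices to exhibit \emph{one} smooth algebraic compactification $j_0\cl X_0\hookrightarrow Y_0$ for which $(f^\an)^{-1}(U)$ is an open subanalytic subset of $Y_0(\C)$. Accordingly, I would not fix $Y_0$ in advance, but take for it the smooth compactification of $X_0$ supplied---together with a morphism of algebraic varieties $\hat f\cl Y_0\to Y_1$ such that $\hat f\circ j_0=j_1\circ f$---by the construction of~\eqref{diag:compmap} applied to $f$ and $j_1$. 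Since $Y_0$ and $Y_1$ are complete, $\hat f$ is proper, hence $\hat f^\an\cl Y_0(\C)\to Y_1(\C)$ is a proper real analytic map of compact real analytic manifolds, and the relation $\hat f\circ j_0=j_1\circ f$ gives, after identifying $X_0(\C)$ with $j_0\big(X_0(\C)\big)\subset Y_0(\C)$, the equality $(f^\an)^{-1}(U)=X_0(\C)\cap(\hat f^\an)^{-1}(U)$ of subsets of $Y_0(\C)$.

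It then remains to check that the right-hand side is open and subanalytic in $Y_0(\C)$. Openness is clear, $f^\an$ being continuous and $X_0(\C)$ open in $Y_0(\C)$. For subanalyticity: $X_0(\C)$ is subanalytic in $Y_0(\C)$ because $Y_0\setminus X_0$ is Zariski closed, hence semialgebraic, hence subanalytic; and $(\hat f^\an)^{-1}(U)$ is subanalytic in $Y_0(\C)$ because the preimage of a subanalytic set by a proper real analytic map is subanalytic---concretely, writing $\Gamma_{\hat f^\an}$ for the (closed, indeed algebraic) graph, $(\hat f^\an)^{-1}(U)=p_1\big(\Gamma_{\hat f^\an}\cap(Y_0(\C)\times U)\big)$ is the image under the proper projection $p_1\cl Y_0(\C)\times Y_1(\C)\to Y_0(\C)$ of a subanalytic subset of the compact manifold $Y_0(\C)\times Y_1(\C)$, hence subanalytic, as in Lemmas~\ref{lem:compsub} and~\ref{lem:funsub}. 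A finite intersection of subanalytic subsets being subanalytic, $(f^\an)^{-1}(U)$ is an open subanalytic subset of $Y_0(\C)$, i.e.\ an object of $\Op_{X_0^\temp}$. The main obstacle---and the only step needing care---is the reduction carried out in the second paragraph: subanalyticity has to be tested in the compactification $Y_0(\C)$, not merely in $X_0(\C)$, which forces the passage from $f^\an$ to the proper map $\hat f^\an$; granting this, the rest is routine.
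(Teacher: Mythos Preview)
Your proof is correct and follows essentially the same route as the paper: choose compactifications $Y_0,Y_1$ together with an extension $\hat f\cl Y_0\to Y_1$ as in~\eqref{diag:compmap}, rewrite $(f^\an)^{-1}(U)=\hat f^{-1}(U)\cap X_0^\an$, and conclude by the stability of subanalytic sets. The paper's proof is terser---it simply asserts that $\hat f^{-1}(U)$ is subanalytic in $Y_0$ (relying implicitly on the fact, used earlier in Proposition~\ref{lem:precomposition}, that preimages under real analytic maps are subanalytic)---whereas you spell this out via the graph projection; your references to Lemmas~\ref{lem:compsub} and~\ref{lem:funsub} are not quite the right ones for that step, but the argument itself is sound.
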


\begin{proof}
There exist smooth algebraic compactifications $Y_0$ and $Y_1$ of $X_0$ and $X_1$ such that $f$ extends to a regular morphism $\widehat{f} \cl Y_0 \to  Y_1$ such that Diagram \eqref{diag:compmap} commutes. Let $U \in \Op_{X^\temp}$, then
\begin{align*}
f^{-1}(U)&=j_0^{-1}\widehat{f}^{-1}(j_1(U))\\
                     &=\widehat{f}^{-1}(U) \cap X_0^\an.
\end{align*}
It follows that $f^{-1}(U)$ is subanalytic in $Y_0$.
\end{proof}
Let $f \cl X_0 \to X_1$ be a morphism of smooth algebraic variety. Using Lemma \ref{lem:morwelldef}, we associate to $f$ a morphism of site 
\eq\label{mor:ftemp}
f^\temp \cl X_0^\temp \to X_1^\temp, \;\Op_{X_1^\temp} \ni U \mapsto (f^\an)^{-1}(U) \in \Op_{X_0^\temp}.
\eneq 
\begin{lemma}\label{lem:morsheaftemp}
Let $f \cl X_0 \to X_1$ be a morphism of smooth algebraic varieties. Then $f$ induces a canonical morphism in $\Derb(\cor_{X_0^\temp})$
\eqn
f^{\temp \,\sharp} \cl \opb{(f^\temp)}\Ot[X_1^\temp] \to \Ot[X_0^\temp]
\eneqn
\end{lemma}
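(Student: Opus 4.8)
The plan is to obtain $f^{\temp\,\sharp}$ from the transfer morphism~\eqref{eq:KS314} applied to an algebraic extension of $f$, and then to check independence of the choices involved. First I would choose smooth algebraic compactifications $j_0\cl X_0\hookrightarrow Y_0$, $j_1\cl X_1\hookrightarrow Y_1$ and a regular morphism $\widehat f\cl Y_0\to Y_1$ with $\widehat f\vert_{X_0}=f$, as provided by the construction preceding Lemma~\ref{lem:indep}. Applying~\eqref{eq:KS314} to the analytification of $\widehat f$ and precomposing with the canonical $\cor$-linear morphism $\opb{(\widehat f_\sa)}\Ot[Y_{1,\sa}]\to\shd_{Y_{0,\sa}\to Y_{1,\sa}}\ltens[\opb{(\widehat f_\sa)}\shd_{Y_{1,\sa}}]\opb{(\widehat f_\sa)}\Ot[Y_{1,\sa}]$, $s\mapsto 1\otimes s$, yields a morphism $\alpha_{\widehat f}\cl\opb{(\widehat f_\sa)}\Ot[Y_{1,\sa}]\to\Ot[Y_{0,\sa}]$ in $\Derb(\cor_{Y_{0,\sa}})$. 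I would then restrict $\alpha_{\widehat f}$ to the site $X_0^\temp$, which is the one induced by $Y_{0,\sa}$ on $X_0$. By Lemma~\ref{lem:morwelldef} the morphism $\widehat f_\sa\cl Y_{0,\sa}\to Y_{1,\sa}$ restricts to $f^\temp\cl X_0^\temp\to X_1^\temp$, and it fits into a strictly commutative square with $f^\temp$ and the two ``restriction to the open part'' morphisms of sites $X_i^\temp\to Y_{i,\sa}$ (on objects, both composites send $V\in\Op_{Y_{1,\sa}}$ to $\widehat f^{-1}(V)\cap X_0$). As $\Ot[X_i^\temp]$ is, by definition, the restriction of $\Ot[Y_{i,\sa}]$ to $X_i^\temp$, this square identifies $\opb{(f^\temp)}\Ot[X_1^\temp]$ with the restriction of $\opb{(\widehat f_\sa)}\Ot[Y_{1,\sa}]$ to $X_0^\temp$, and restricting $\alpha_{\widehat f}$ accordingly gives $f^{\temp\,\sharp}\cl\opb{(f^\temp)}\Ot[X_1^\temp]\to\Ot[X_0^\temp]$.

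It remains, and this is the main point, to check that $f^{\temp\,\sharp}$ does not depend on $(Y_0,Y_1,\widehat f)$. Given two such triples, I would dominate the two compactifications of $X_1$ by a third one $Y_1''$ as in diagram~\eqref{diag:dominationcomp}, then run the construction of diagram~\eqref{diag:compactification} for $f$ with this $Y_1''$ and with $Y_0$ in the role of the auxiliary compactification of $X_0$. This produces a smooth compactification $Y_0''$ of $X_0$, a regular morphism $\widehat f''\cl Y_0''\to Y_1''$ extending $f$, and proper morphisms $q\cl Y_0''\to Y_0$, $r\cl Y_1''\to Y_1$ restricting to the identity over $X_0$, resp.\ $X_1$. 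Since $r\circ\widehat f''$ and $\widehat f\circ q$ are regular morphisms $Y_0''\to Y_1$ agreeing on the dense open subset $X_0$ and $Y_1$ is separated, $r\circ\widehat f''=\widehat f\circ q$ exactly.

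I would conclude by comparing the morphism attached to $(Y_0,Y_1,\widehat f)$ with the one attached to $(Y_0'',Y_1'',\widehat f'')$ (and likewise for the other triple), using two properties of~\eqref{eq:KS314}: its compatibility with composition of morphisms of complex manifolds, which gives $\alpha_{r\circ\widehat f''}=\alpha_{\widehat f''}\circ\opb{(\widehat f''_\sa)}(\alpha_r)$ and $\alpha_{\widehat f\circ q}=\alpha_q\circ\opb{(q_\sa)}(\alpha_{\widehat f})$; and the fact that over an open set on which the morphism is an isomorphism,~\eqref{eq:KS314} restricts to the isomorphism of Proposition~\ref{prop:morphcompact}, so that $\alpha_q\vert_{X_0}$ and $\alpha_r\vert_{X_1}$ are precisely the canonical identifications in Lemma~\ref{lem:indep}(b). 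Restricting the equality $\alpha_{\widehat f''}\circ\opb{(\widehat f''_\sa)}(\alpha_r)=\alpha_q\circ\opb{(q_\sa)}(\alpha_{\widehat f})$ to $X_0^\temp$ then shows that both constructions give the same $f^{\temp\,\sharp}$ once $\Ot[X_0^\temp]$ and $\Ot[X_1^\temp]$ are identified via Lemma~\ref{lem:indep}. The hard part is verifying these two compatibilities of~\eqref{eq:KS314}: this is routine but requires unwinding its construction from the tempered-distribution pullback~\eqref{eq:KS256} --- whose restriction to the open part is the desired isomorphism, by the proofs of Propositions~\ref{prop:Dbinvariance} and~\ref{prop:morphcompact} --- together with the transitivity of the $\shd$-module transfer bimodules.
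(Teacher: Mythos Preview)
Your argument is correct, but it takes a different route from the paper's. The paper works directly with the Dolbeault resolution~\eqref{eq:dolbeault2} of $\Ot$ by tempered $\Cinf$-forms: after choosing $Y_0$, $Y_1$, $\widehat f$ as you do, it invokes Corollary~\ref{cor:tempprecompo} to see that pullback of differential forms $\omega\mapsto\widehat f^{\,*}\omega$ sends tempered forms to tempered forms, giving a morphism of complexes $(\Cinfn{Y_{1\sa}}{\mathrm{t}\;(0,\bullet)},\ol\partial)\to\oim{\widehat f}(\Cinfn{Y_{0\sa}}{\mathrm{t}\;(0,\bullet)},\ol\partial)$, and then restricts and adjoins. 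The independence of the compactification is simply asserted.

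What each approach buys: the paper's construction is entirely concrete and, because~\eqref{eq:dolbeault2} is a differential graded \emph{algebra} and $\widehat f^{\,*}$ is multiplicative, it immediately exhibits $f^{\temp\,\sharp}$ as a morphism of sheaves of dg-rings, which feeds directly into the ring structure used later (Remark~\ref{rem:ringstructsa} and the $\stan$-construction). Your route via the $\shd$-module transfer morphism~\eqref{eq:KS314} is more functorial in flavour and, notably, you actually carry out the independence-of-compactification argument in detail (domination plus functoriality of~\eqref{eq:KS314} under composition, plus Proposition~\ref{prop:morphcompact} on the open locus), whereas the paper leaves this to the reader. The two constructions agree, of course, since~\eqref{eq:KS314} is built from~\eqref{eq:KS256}, whose restriction to smooth functions is exactly pullback.
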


\begin{proof}
There exist smooth algebraic compactifications $Y_0$ and $Y_1$ of $X_0$ and $X_1$ such that $f$ extends to a regular morphism $\widehat{f} \cl Y_0 \to  Y_1$ such that Diagram \eqref{diag:compmap} commutes. Using the presentation of $\Ot[Y_{0\,\sa}]$ (resp. $\Ot[Y_{1\sa}]$) by the complexe \eqref{eq:dolbeault2} and Corollary \ref{cor:tempprecompo}, the pullback of differential forms by $f$ provides a morphism
\eqn
(\Cinfn{Y_{1\sa}}{\mathrm{t}\;(0,\bullet)},\; \ol\partial) \to \oim{\hat f}(\Cinfn{Y_{0\sa}}{\mathrm{t}\;(0,\bullet)},\;\ol\partial)) ;\quad \omega \mapsto {\hat f}^\ast \omega
\eneqn
Restricting to $X^\temp$ and using the adjunction $(\opb{\hat{f}},\oim{\hat f})$, we get the desired morphism
\eqn
f^{\temp, \, \sharp}:\opb{(f^\temp)}(\Cinfn{X_1^\temp}{\mathrm{t}\;(0,\bullet)},\; \ol\partial) \to (\Cinfn{X_0^\temp}{\mathrm{t}\;(0,\bullet)},\;\ol\partial)). 
\eneqn
Moreover, this morphism does not depend of the choice of a compactification.
\end{proof}

The datum of $(f^\temp, f^{\temp \, \sharp})$ defines a morphism in $\mathsf{TFS}$. If there is no risk of confusion, we write $f^\temp$ instead of $(f^\temp, f^{\temp \, \sharp})$. Finally, if $g \cl Y_0 \to Y_1$ is an other regular morphism, on checks that $(g \circ f)^\temp=g^\temp \circ f^\temp$ and that $\id^\temp=\id$.

In view of the preceding construction we can state the following Theorem.
\begin{theorem}
The functor $(\cdot)^\temp \cl\mathsf{Var_{sm}} \to \mathsf{TFS}$, $(X, \sho_X) \mapsto (X^\temp,\Otempx)$, $f \mapsto f^\temp$ is well defined and is called the functor of \textit{tempered analytification} ($\mathrm{t}$-analytification for short). We denote by $\mathsf{TAn}_\C$ its image in $\mathsf{TFS}$.
\end{theorem}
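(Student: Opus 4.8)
The plan is to verify the three separate assertions in the statement: that the object-level assignment $X \mapsto (X^\temp, \Otempx)$ lands in $\mathsf{TFS}$, that the morphism-level assignment $f \mapsto f^\temp = (f^\temp, f^{\temp\,\sharp})$ lands in $\mathsf{TFS}$, and finally that these assignments respect composition and identities. The first point is essentially already done: Lemma~\ref{lem:indep} shows that $X_{Y_\sa}$ and $\Ot[Y_\sa]|_{X_{Y_\sa}}$ are independent of the chosen smooth compactification $Y$, and $X_{Y_\sa}$ is a site coming from a $\TP$-space while $\Ot[Y_\sa]|_{X_{Y_\sa}}$ lies in $\Derb(\cor_{X_{Y_\sa}})$, so the pair $(X^\temp, \Otempx)$ is by definition an object of $\mathsf{TFS}$. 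I would simply cite Lemma~\ref{lem:indep} and the preceding definitions for this.

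Next I would treat functoriality on morphisms. Given $f\cl X_0 \to X_1$, Lemma~\ref{lem:morwelldef} guarantees $(f^\an)^{-1}(U) \in \Op_{X_0^\temp}$ for all $U \in \Op_{X_1^\temp}$, so \eqref{mor:ftemp} really defines a morphism of sites (equivalently of $\TP$-spaces), and Lemma~\ref{lem:morsheaftemp} produces the accompanying morphism $f^{\temp\,\sharp}\cl \opb{(f^\temp)}\Ot[X_1^\temp] \to \Ot[X_0^\temp]$ in $\Derb(\cor_{X_0^\temp})$. Together these give a morphism $(f^\temp, f^{\temp\,\sharp})$ in $\mathsf{TFS}$ by definition of the latter category. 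So again the real content has been isolated into the two preceding lemmas, and the job here is to assemble their outputs.

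The genuinely functorial part — compatibility with composition and identities — is the step that needs a little care, and I would spell it out. For identities, $\id^\an = \id$ so $\id^\temp$ is the identity morphism of sites and $\id^{\temp\,\sharp}$ is the identity of $\Ot[X^\temp]$ (the pullback of differential forms along $\id$ is the identity), hence $\id^\temp = \id$ in $\mathsf{TFS}$. For composition, given $f\cl X_0 \to X_1$ and $g\cl X_1 \to X_2$, on the level of $\TP$-spaces one has $(g\circ f)^\an{}^{-1} = (f^\an)^{-1}\circ(g^\an)^{-1}$, which immediately gives $(g\circ f)^\temp = g^\temp \circ f^\temp$ as morphisms of sites. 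The point requiring verification is that the sharp maps compose correctly, i.e.\ that $(g\circ f)^{\temp\,\sharp}$ agrees with the composite $\opb{((g\circ f)^\temp)}\Ot[X_2^\temp] \simeq \opb{(f^\temp)}\opb{(g^\temp)}\Ot[X_2^\temp] \xrightarrow{\opb{(f^\temp)}g^{\temp\,\sharp}} \opb{(f^\temp)}\Ot[X_1^\temp] \xrightarrow{f^{\temp\,\sharp}} \Ot[X_0^\temp]$. Since, by the construction in Lemma~\ref{lem:morsheaftemp}, each sharp map is induced at the level of Dolbeault complexes \eqref{eq:dolbeault2} by pullback of differential forms along an extension $\widehat{f}$, and since one may choose compatible compactifications so that $\widehat{g\circ f} = \widehat{g}\circ\widehat{f}$ (using the construction recalled in item (ii) before Lemma~\ref{lem:indep}), functoriality of pullback of forms, $(\widehat g \circ \widehat f)^\ast = \widehat f^\ast \circ \widehat g^\ast$, together with the independence of $f^{\temp\,\sharp}$ from the chosen compactification, gives the claim after restricting to $X_0^\temp$ and using the adjunction $(\opb{\widehat f}, \oim{\widehat f})$.

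The main obstacle is this last compatibility check: one must be slightly careful that the compactifications $Y_0, Y_1, Y_2$ and the extensions $\widehat f, \widehat g, \widehat{g\circ f}$ can be chosen coherently (compactification being non-functorial), so that the identity $(\widehat g\circ\widehat f)^\ast = \widehat f^\ast\circ\widehat g^\ast$ at the level of complexes \eqref{eq:dolbeault2} actually descends to the asserted equality of sharp maps on $X_0^\temp$; the already-established independence of $f^{\temp\,\sharp}$ from the choice of compactification is exactly what lets one pass from one coherent choice to any other, so once that is invoked the verification is formal. Everything else is bookkeeping with the adjunctions and the definition of $\mathsf{TFS}$.
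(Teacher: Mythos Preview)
Your proposal is correct and follows exactly the paper's approach: the paper gives no separate proof of this theorem but simply states it ``in view of the preceding construction,'' having already assembled Lemmas~\ref{lem:indep}, \ref{lem:morwelldef}, and \ref{lem:morsheaftemp} and remarked in one line that ``one checks that $(g\circ f)^\temp = g^\temp\circ f^\temp$ and that $\id^\temp = \id$.'' Your write-up is more explicit than the paper on this last functoriality check---in particular on choosing compatible compactifications so that $\widehat{g\circ f}=\widehat g\circ\widehat f$ and then invoking the compactification-independence of $f^{\temp,\sharp}$---but this is precisely the verification the paper leaves to the reader.
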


\begin{remark}
In a next paper, we will refine the construction of the functor $( \cdot )^\temp$ to get a functor valued in the $\infty$-category of $\mathbb{E}_\infty$ ringed spaces through which the usual analytification functor will factor. We prove such a result for the Stein tempered analytification functor in the next section.
\end{remark}

\subsection{The sheaves $\Otempx$ and $\Ozx$}\label{subsec:OtemOzar}
Let $(X,\sho_X)$ be a smooth algebraic variety. If we want to emphasize that $X$ is endowed with the Zariski topology we write $X_\zar$ instead of $X$.

We first note that there is a natural morphism of site 
\eq
\rho_\tz \colon X^\temp \to X_\zar.
\eneq
We shall study its properties.

Let $Y$ be a smooth algberaic compactification of $X$. There are  natural morphisms of sites that fit into the following commutative diagram.
\eqn
\xymatrix{
  Y_\sa \ar[r]^-{\rho_\zar^\sa} \ar[d]_-{j_{X_{Y_\sa}}} &Y_\zar \ar[d]^{j_{X_\zar}}.\\
  X^\temp \ar[r]_-{\rho_\tz} & X_\zar&
}
\eneqn
\begin{lemma}
Let $X$ be a smooth algebraic variety and $U$ a Zariski open subset of $X$. Let $f \in \sho_X(U)$. Then, the analytification of $f$ is a tempered holomorphic function.
\end{lemma}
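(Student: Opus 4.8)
The statement to prove is: for $X$ a smooth algebraic variety, $U\subset X$ a Zariski open subset, and $f\in\sho_X(U)$, the analytification $f^\an$ is a tempered holomorphic function, i.e. a section of $\rmH^0(\Otempx)$ over $U^\an$. The plan is to reduce to a local computation near a point of the boundary and there to invoke the classical fact that a rational function has polynomial growth in the inverse of the distance to its polar locus, together with Remark~\ref{rem:polyimptem}(ii) which upgrades polynomial growth to temperedness for holomorphic functions.

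First I would fix a smooth algebraic compactification $j\cl X\hookrightarrow Y$, so that by definition $\Otempx=\Ot[Y_\sa]\vert_{X_{Y_\sa}}$ and $U^\an$ is a subanalytic open subset of $Y^\an$ (being Zariski open in $X$, it is Zariski open in $Y$ minus the boundary divisor, hence constructible, hence subanalytic). By the Dolbeault presentation~\eqref{eq:dolbeault2}, $\rmH^0(\Ot[Y_\sa])$ is the sheaf $U'\mapsto\{\varphi\in\sho_{Y^\an}(U')\mid \varphi \text{ is tempered on }U'\}$. Since $f^\an$ is genuinely holomorphic on $U^\an$, it suffices to show it is tempered there. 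Temperedness is a local condition on $Y^\an$ (Definition~\ref{def:polgrowth}), and by Remark~\ref{rem:polyimptem}(ii) it is enough to check polynomial growth: a holomorphic function with polynomial growth on a relatively compact open subset is automatically tempered. Thus the task reduces to: for every $p\in\partial U^\an\subset Y^\an$, show $f^\an$ has polynomial growth at $p$.

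Now the key step. Near such a point $p$, choose a Zariski open affine neighbourhood $W$ of $p$ in $Y$ on which $f$, being regular on $U$, is a \emph{rational} function: $f=g/h$ with $g,h$ regular on $W$ and $h$ not identically zero, and $U\cap W\subset W\setminus\{h=0\}$. The polar locus $Z=\{h=0\}$ is a closed algebraic (hence subanalytic) subset of $W$. On a small compact subanalytic neighbourhood $K$ of $p$ the numerator $g^\an$ is bounded. For the denominator, I invoke the Łojasiewicz inequality (stated in the excerpt): the functions $x\mapsto \dist(x, K\setminus (U\cap W))$ and $x\mapsto |h^\an(x)|$ are subanalytic (using Lemma~\ref{lem:funsub} and Lemma~\ref{lem:compsub}, and that $|h^\an|$ is real-analytic hence subanalytic with compact graph on $K$), and the zero set of the distance function is contained in the zero set of $h^\an$ (if $\dist(x,K\setminus(U\cap W))=0$ then $x\notin U\cap W$, but on $W\setminus Z$ one has $x\in U\cap W$, so $x\in Z$, i.e. $h^\an(x)=0$). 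Therefore there are constants $c,r>0$ with $|h^\an(x)|\ge c\,\dist(x,K\setminus(U\cap W))^r$ on $K$. Combining, for $x\in K\cap U$,
\[
\dist(x, K\setminus(U\cap W))^{\lceil r\rceil}\,|f^\an(x)| \;=\; \dist(x,K\setminus(U\cap W))^{\lceil r\rceil}\,\frac{|g^\an(x)|}{|h^\an(x)|}\;\le\; \frac{1}{c}\,\sup_K|g^\an|\;<\;\infty,
\]
(after shrinking $K$ so that $\dist\le 1$, so that raising $r$ up to $\lceil r\rceil$ only helps). This is exactly the polynomial growth bound of~\eqref{eq:moderate}. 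Since temperedness and the statement ``$f^\an\in\rmH^0(\Otempx)(U^\an)$'' do not depend on the choice of compactification by Lemma~\ref{lem:indep}, we are done.

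The main obstacle is the bookkeeping around the boundary: making sure that the relevant distance functions and $|h^\an|$ are genuinely subanalytic with compact graphs so that Łojasiewicz applies, and that the local affine chart $W$ containing $p$ can be chosen so that $f$ is rational there — this uses that $f$ is regular on the Zariski open $U$ and that $Y$ is covered by affine opens, so on each such $W$, $f$ extends to a rational function on $W$ with poles only along $W\cap(Y\setminus U)$. Everything else is a routine application of the Łojasiewicz inequality exactly as in the proof of Proposition~\ref{lem:precomposition}, of which this argument is essentially a special case.
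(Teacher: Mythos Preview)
Your approach is essentially the paper's: localize near a boundary point, write $f$ as a quotient $g/h$ of regular functions, and invoke the {\L}ojasiewicz inequality to bound $1/|h^\an|$ polynomially in the inverse distance to the complement. The paper streamlines the setup slightly: it checks temperedness at points $p\in X$ (no compactification is needed, since temperedness in the sense of Definition~\ref{def:polgrowth} is a condition on the ambient manifold $X^\an$; the application to $\Otempx$ is then made by taking $X=Y$ compact), and it uses Proposition~\ref{lem:precomposition} to replace a local affine chart by an affine subvariety of $\mathbb{A}^n_\C$, so that $f$ becomes the restriction of a rational function $P/Q$ with $P,Q$ polynomials and the {\L}ojasiewicz step is immediate.

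There is one genuine slip in your {\L}ojasiewicz step. You assert that the zero set of the distance function is contained in the zero set of $h^\an$, and from this deduce $|h^\an|\ge c\,\dist^r$. But with the paper's formulation of {\L}ojasiewicz, the hypothesis $\dist^{-1}(0)\subset (h^\an)^{-1}(0)$ yields $\dist\ge c\,|h^\an|^r$, the wrong direction; and your parenthetical justification silently uses $W\setminus Z\subset U\cap W$, which you never established (you only stated $U\cap W\subset W\setminus Z$, and the reverse inclusion is false in general, e.g.\ when $f$ extends regularly across part of $W\setminus U$). The fix is simply to reverse the roles: from $U\cap W\subset W\setminus Z$ you get $(h^\an)^{-1}(0)\cap K\subset K\setminus(U\cap W)=\dist^{-1}(0)$, and then {\L}ojasiewicz indeed gives $|h^\an|\ge c\,\dist^r$ on $K$, which is what your displayed estimate needs.
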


\begin{proof}
Let $p \in X$ and $V$ an open affine subset of $X$ containing $p$, we can further assume that $U$ is affine and since $X$ is an algebraic variety, the open set $U \cap V$ is again affine.
In view of Proposition \ref{lem:precomposition}, we can assume that $V$ is a smooth affine subvariety of $\mathbb{A}^n_\C$. Then $f$ is the restriction to $U\cap V$ of a rational function $P(x)/Q(x)$ on $\mathbb{A}^n_\C$. It follows immediately from Lojaciewicz's inequality that $P(x)/Q(x)$ is tempered in $p$ when $p$ is a zero of $Q$.
\end{proof}

Using the commutative differential graded algebra \eqref{eq:dolbeault2} to represent $\Oty$, the above observations implies that there is a canonical morphism of sheaves of differential graded rings
\eqn\label{mor:Liouvilleinv}
 \Ozy \to \oim{\rhoz^{\sa}}\Oty\, , \quad \varphi \mapsto \varphi^\an.
\eneqn
Restrincting to $X_\zar$ and using the adjunction $(\opb{\rho_\tz}, \oim{\rho_\tz})$ this induces a morphism
\eq
L_\temp \cl \opb{\rho_\tz}\Ozx \to \Otempx.
\eneq
By adjunction, we get the morphism
\eq\label{mor:Liouvillesa}
 \Ozx \to \roim{{\rho_\tz}}\Otempx.
\eneq

Let $(X^\prime, \sho_{X^\prime})$ be a complex manifold and $Z^\prime$ be an anlytic subset of $X^\prime$. We denote by $\shi_{Z^\prime}$ the sheaf of holomorphic functions vanishing on $Z^\prime$ and set 
\eqn
\sect_{[Z^\prime]}(\sho_{X^\prime}) \eqdot \indlim[k] \hom[\sho_{X^\prime}](\sho_{X^\prime}/\shi^k_{Z^\prime},\sho_{X^\prime}).
\eneqn

\begin{lemma}
Let $X$ be a smooth algebraic variety and $Z$ be a closed subset of $X$. Then,
\eq
\rsect_{[Z^\an]}(\sho_{X^\an}) \simeq (\rsect_{Z}(\sho_{X_\zar}))^\an.
\eneq
\end{lemma}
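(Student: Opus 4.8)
The plan is to describe both sides of the isomorphism by means of \emph{algebraic} local cohomology and then to transport this description across the analytification functor. Note that one cannot reduce to the complement: analytification does not commute with $\RR j_*$ for the open immersion $j\cl X\setminus Z\into X$ (already false for $\C^\times\into\C$), whereas it does commute with each finite approximation $\rhom(\sho/\shi^k,-)$, and this is precisely what makes the statement — phrased with $\rsect_{[Z^\an]}$ rather than $\rsect_{Z^\an}$ — accessible.

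To set up, I would fix the canonical morphism of ringed spaces $\pi\cl X^\an\to X_\zar$ and recall from \cite[Expos\'e XII]{SGA1} that $\sho_{X^\an}$ is flat over $\opb{\pi}\sho_{X_\zar}$; consequently analytification $(\scdot)^\an=\sho_{X^\an}\tens[\opb{\pi}\sho_{X_\zar}]\opb{\pi}(\scdot)$ is exact, monoidal and commutes with filtered colimits. Let $\shi_Z\subset\sho_{X_\zar}$ be the reduced ideal sheaf of $Z$. Since $X$ is a Noetherian scheme and $\sho_{X_\zar}$ is quasi-coherent, the classical comparison of cohomology with supports with algebraic local cohomology gives $\rsect_Z(\sho_{X_\zar})\simeq\indlim[k]\rhom[\sho_{X_\zar}](\sho_{X_\zar}/\shi_Z^k,\sho_{X_\zar})$, the derived colimit being an honest colimit because filtered colimits are exact; likewise, unwinding the definition of $\sect_{[Z^\an]}$, $\rsect_{[Z^\an]}(\sho_{X^\an})\simeq\indlim[k]\rhom[\sho_{X^\an}](\sho_{X^\an}/\shi_{Z^\an}^k,\sho_{X^\an})$.

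The crux is the compatibility of $\rhom$ with $(\scdot)^\an$. For each $k$ I would prove
\[
\bigl(\rhom[\sho_{X_\zar}](\sho_{X_\zar}/\shi_Z^k,\sho_{X_\zar})\bigr)^\an\isoto\rhom[\sho_{X^\an}]\bigl((\sho_{X_\zar}/\shi_Z^k)^\an,\sho_{X^\an}\bigr).
\]
This is local on $X$; since $X$ is smooth, every coherent $\sho_{X_\zar}$-module is perfect, so locally $\sho_{X_\zar}/\shi_Z^k$ has a finite resolution $L^\scbul$ by free $\sho_{X_\zar}$-modules of finite rank. Then $\rhom[\sho_{X_\zar}](\sho_{X_\zar}/\shi_Z^k,\sho_{X_\zar})$ is represented by the dual complex $(L^\scbul)^\vee$; applying the exact functor $(\scdot)^\an$ gives $((L^\scbul)^\an)^\vee$, while $(L^\scbul)^\an\to(\sho_{X_\zar}/\shi_Z^k)^\an$ is still a finite free resolution. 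Using moreover that $(\scdot)^\an$ is monoidal, one identifies $(\sho_{X_\zar}/\shi_Z^k)^\an\simeq\sho_{X^\an}/\shj^k$, where $\shj\seteq\shi_Z\cdot\sho_{X^\an}$ is the ideal generated by the analytifications of the regular functions vanishing on $Z$. Since the zero set of $\shj$ is $Z^\an$, the analytic Nullstellensatz gives $\sqrt{\shj}=\shi_{Z^\an}$, and Noetherianity of $\sho_{X^\an}$ makes the sequences $\shj^k$ and $\shi_{Z^\an}^k$ cofinal at every stalk, so $\indlim[k]\rhom[\sho_{X^\an}](\sho_{X^\an}/\shj^k,\sho_{X^\an})\simeq\rsect_{[Z^\an]}(\sho_{X^\an})$. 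Assembling these, and using that $(\scdot)^\an$ commutes with the colimit over $k$, yields $\bigl(\rsect_Z(\sho_{X_\zar})\bigr)^\an\simeq\rsect_{[Z^\an]}(\sho_{X^\an})$, naturally in $X$ and $Z$.

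The main obstacle is the flat base-change isomorphism for $\rhom$ in the third paragraph: this is exactly where smoothness of $X$ is used — to guarantee perfectness of the coherent sheaves $\sho_{X_\zar}/\shi_Z^k$ — and it must be arranged so that $(\scdot)^\an$ can be applied term by term to a finite free resolution and still compute $\rhom$ over $\sho_{X^\an}$. Two subsidiary points requiring care are the comparison $\rsect_Z\simeq\rsect_{[Z]}$ on the algebraic side, which genuinely uses quasi-coherence on a Noetherian scheme, and the passage from $\shj=\shi_Z\cdot\sho_{X^\an}$ to its radical $\shi_{Z^\an}$, which relies on the analytic Nullstellensatz and on Noetherianity of $\sho_{X^\an}$.
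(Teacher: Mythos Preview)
Your proposal is correct and follows essentially the same route as the paper: both identify each side with $\indlim[k]\rhom(\sho/\shi^k,\sho)$ via Grothendieck's algebraic local cohomology (the paper cites \cite[Theorem~2.8]{GrL}) and then transport this description across analytification. Your write-up is in fact more careful than the paper's, which leaves implicit the two points you single out as obstacles---the compatibility of $\rhom$ with $(\scdot)^\an$ (via perfectness, using smoothness of $X$) and the cofinality of $(\shi_Z\cdot\sho_{X^\an})^k$ with $\shi_{Z^\an}^k$.
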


\begin{proof}
Writing $\shi_{Z^\an}$ for the defining ideal of $Z^\an$ in $X^\an$, we have that
\eqn
\sect_{[Z^\an]}(\sho_{X^\an}) \eqdot \indlim[k] \hom[\sho_{X^\an}](\sho_{X^\an}/\shi^k_{Z^\an},\sho_{X^\an}) \simeq (\indlim[k] \hom[\sho_{X}](\sho_{X}/\shi^k_{Z},\sho_{X}))^\an.
\eneqn
Moreover by \cite[Theorem 2.8]{GrL}, on an algebraic variety,
\eqn
\rmH^i_{Z}(\sho_{X}) \simeq \indlim[k] \ext[\sho_{X}]{i}(\sho_{X}/\shi^k_{Z},\sho_{X}).
\eneqn
\end{proof}
\begin{theorem}\label{thm:Liouvillesheafsa} The object $\roim{\rho_\tz}\Otempx$ is concentrated in degree zero and
the morphism \eqref{mor:Liouvillesa} is an isomorphism. 
\end{theorem}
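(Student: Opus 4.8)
The plan is to check both assertions locally on $X_\zar$ and, over affine Zariski opens, to reduce everything to a comparison of two excision triangles on a smooth compactification, where Serre's GAGA and the two lemmas preceding the theorem apply.

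Fix a smooth algebraic compactification $j\cl X\into Y$ (it exists by Nagata's and Hironaka's theorems, as recalled), so that $\Otempx=\Ot[Y_\sa]\vert_{X_{Y_\sa}}$ and $\rho_\tz$ is induced on objects by $\Op_{X_\zar}\ni V\mapsto V^\an$. For a Zariski open $V\subseteq X$ the set $V^\an$ is the complement, in the \emph{compact} space $Y^\an$, of the closed complex analytic subset $(Y\setminus V)^\an$; hence $V^\an$ is a relatively compact subanalytic open of $Y^\an$, i.e. $V^\an\in\Op_{Y_\sa}$, and
\[
\rsect(V;\roim{\rho_\tz}\Otempx)\;\simeq\;\RHom[\cor_{Y_\sa}](\cor_{V^\an},\Ot[Y_\sa])\;=:\;\rsect(V^\an;\Ot[Y_\sa]).
\]
Since affine opens form a basis of $X_\zar$ and, for every $x\in X$, one has $\mathcal{H}^i(\roim{\rho_\tz}\Otempx)_x\simeq H^i\bigl(\indlim[V\ni x,\ \mathrm{affine}]\rsect(V^\an;\Ot[Y_\sa])\bigr)$, it is enough to prove: \emph{for every affine open $V\subseteq X$, setting $Z\seteq Y\setminus V$, the object $\rsect(V^\an;\Ot[Y_\sa])$ is concentrated in degree $0$ and the map $\sho_X(V)\to\rsect(V^\an;\Ot[Y_\sa])$ induced by \eqref{mor:Liouvillesa} (a regular function goes to its analytification, tempered by the lemma above) is an isomorphism.}

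The heart of the argument is the comparison of two distinguished triangles. On $Y_\sa$, the excision short exact sequence $0\to\cor_{V^\an}\to\cor_{Y^\an}\to\cor_{Z^\an}\to0$ gives, upon applying $\RHom[\cor_{Y_\sa}](-,\Ot[Y_\sa])$, the triangle
\[
\RHom[\cor_{Y_\sa}](\cor_{Z^\an},\Ot[Y_\sa])\to\rsect(Y^\an;\Ot[Y_\sa])\to\rsect(V^\an;\Ot[Y_\sa])\xto{+1}.
\]
As $Y^\an$ is compact, $\cor_{Y^\an}=\cor_{Y_\sa}$, and by \eqref{eq:rsectMsa2} together with $\opb{\rhosa}\Ot[Y_\sa]\simeq\sho_{Y^\an}$ (from \eqref{eq:opbCtm} and the Dolbeault resolution \eqref{eq:dolbeault2}) the middle term is $\rsect(Y^\an;\sho_{Y^\an})$, which equals $\rsect(Y;\sho_Y)$ by GAGA (valid since $Y$ is proper, \cite{Se55}). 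On $Y_\zar$ one has the algebraic excision triangle $\rsect_Z(Y;\sho_Y)\to\rsect(Y;\sho_Y)\to\rsect(V;\sho_V)\xto{+1}$, whose last term is $\sho_X(V)$ in degree $0$ because $V$ is affine. The two triangles share the same middle term, with compatible maps to their third terms (restriction of sections on both sides), so, passing to cofibers, the theorem reduces to identifying the first terms,
\[
\RHom[\cor_{Y_\sa}](\cor_{Z^\an},\Ot[Y_\sa])\;\simeq\;\rsect_Z(Y;\sho_Y),
\]
compatibly with the arrows into $\rsect(Y^\an;\sho_{Y^\an})=\rsect(Y;\sho_Y)$.

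This last identification is where I expect the main difficulty to lie. The left-hand side is the global tempered cohomology of $\sho$ with support in the complex analytic subset $Z^\an$; by the comparison of \cite{KS96} (see also \cite{KS01}) between tempered holomorphic cohomology and formal cohomology along a complex analytic subset, it equals $\rsect\bigl(Y^\an;\rsect_{[Z^\an]}(\sho_{Y^\an})\bigr)$. The lemma preceding the theorem gives $\rsect_{[Z^\an]}(\sho_{Y^\an})\simeq(\rsect_Z(\sho_{Y_\zar}))^\an$; and since $\rsect_Z(\sho_{Y_\zar})=\indlim[k]\rhom[\sho_Y](\sho_Y/\shi_Z^k,\sho_Y)$ is a filtered colimit of bounded complexes of \emph{coherent} $\sho_Y$-modules, GAGA applied termwise, together with the commutation of the cohomology of the compact space $Y^\an$ with filtered colimits, yields $\rsect\bigl(Y^\an;(\rsect_Z(\sho_{Y_\zar}))^\an\bigr)\simeq\rsect_Z(Y;\sho_Y)$. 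What then remains is bookkeeping: checking that all these isomorphisms are natural with respect to the restriction maps and to analytification of regular functions, so that the resulting isomorphism $\sho_X(V)\isoto\rsect(V^\an;\Ot[Y_\sa])$ is exactly \eqref{mor:Liouvillesa} over $V$, and observing that the stalk computation above then forces $\roim{\rho_\tz}\Otempx$ to be concentrated in degree $0$. Note that this route deliberately avoids the (still conjectural) tempered Dolbeault vanishing of Conjecture~\ref{claim:dolbeautvan}: the excision triangle transfers the whole problem to the coherent sheaf $\sho_Y$ on $Y$ and to local cohomology, where GAGA and the preceding lemmas suffice.
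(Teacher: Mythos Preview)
Your proof is correct and follows essentially the same route as the paper: both compare the algebraic and tempered excision triangles on a smooth compactification $Y$, use GAGA for the middle term $\rsect(Y;\sho_Y)$, and reduce the support term to the identification $\opb{\rhosa}\rsect_Z(\Oty)\simeq\rsect_{[Z^\an]}(\sho_{Y^\an})$ from~\cite{KS96}*{Th.~5.12}. You spell out in more detail the passage from this local isomorphism back to $\rsect_Z(Y;\sho_Y)$ (via the preceding lemma and GAGA on the filtered colimit of coherent sheaves), a step the paper leaves implicit; your restriction to affine opens rather than all Zariski opens is harmless since they form a basis.
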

\begin{proof}
It is enough to check the isomorphism
\eqn
&&\rsect(U;\Oty)\simeq\rsect(U;\Ozy)
\eneqn
for any $U\in\Op_\Xz$. Let $Z\eqdot Y\setminus U$. There is the following commutative diagram 
\eqn
\xymatrix{
\rsect_Z(Y;\Ozy)  \ar[r] \ar[d]^{L_\temp} & \rsect(Y;\Ozy) \ar[r] \ar[d]^{L_\temp}& \rsect(U;\Ozy)\ar[d]^{L_\temp} \ar[r]^-{[+1]}&\\
\rsect_Z(Y;\Oty) \ar[r]& \rsect(Y;\Oty) \ar[r]& \rsect(U;\Oty)\ar[r]^-{[+1]}&.
}
\eneqn
where the rows are distinguished triangle.
As $Y$ is proper $\rsect(Y;\Oty)\simeq\rsect(Y^\an;\sho_{Y^\an})$. Then, $\rsect(Y;\Oty)\simeq\rsect(Y;\Ozy)$ by the GAGA theorem~\cite[Expos\'e XII]{SGA1}. We are reduced to prove the local isomorphism
\eqn
&&\opb{\rhosa}\rsect_Z(\Oty)\simeq\rsect_{[Z^\an]}(\sho_{Y^\an}).
\eneqn
This result is Theorem 5.12 in~\cite{KS96}. This implies in particular that the morphism $\Ozy \to \roim{\rhoz^\sa}\Oty$ is an isomorphism.
\end{proof}

\begin{corollary}
A tempered holomorphic function on a Zariski open set is regular.
\end{corollary}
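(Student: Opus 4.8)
The plan is to read this off directly from Theorem~\ref{thm:Liouvillesheafsa}. Let $U$ be a Zariski open subset of $X$. Being the complement of a closed algebraic --- hence subanalytic --- subset, $U$ is a subanalytic open of $X^\an$, so it is at the same time an object of $\Op_\Xz$ and an object of the site $X^\temp$, compatibly with the morphism of sites $\rho_\tz\cl X^\temp\to X_\zar$. A tempered holomorphic function on $U$ is, by definition, a section over $U$ of the sheaf of rings $\rmH^0(\Otempx)$, i.e. an element of $\sect(U;\rmH^0\Otempx)$, and we must show that it lies in the image of the analytification map $\varphi\mapsto\varphi^\an$ from $\sho_X(U)$.

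First I would identify $\sect(U;\rmH^0\Otempx)$ with $H^0\bl\rsect(U;\Otempx)\br$. Since $\Otempx$ is represented by the Dolbeault complex obtained by restricting~\eqref{eq:dolbeault2}, it is concentrated in degrees $\geq 0$; hence the truncation triangle $\rmH^0\Otempx\to\Otempx\to\tau^{\geq 1}\Otempx$, together with the vanishing of $H^{-1}$ and $H^0$ of $\rsect(U;\tau^{\geq 1}\Otempx)$, gives the desired isomorphism. Next, as the morphism of sites $\rho_\tz$ is induced by the functor sending a Zariski open of $X$ to the corresponding object of $X^\temp$, one has $\rsect(U;\Otempx)\simeq\rsect\bl U;\roim{\rho_\tz}\Otempx\br$; and Theorem~\ref{thm:Liouvillesheafsa} asserts that the Liouville morphism~\eqref{mor:Liouvillesa} is an isomorphism $\Ozx\isoto\roim{\rho_\tz}\Otempx$ onto an object concentrated in degree zero. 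Therefore $H^0\bl\rsect(U;\Otempx)\br\simeq H^0(U;\Ozx)=\sho_X(U)$, and, unwinding the construction of~\eqref{mor:Liouvillesa}, this isomorphism is precisely the analytification map $\varphi\mapsto\varphi^\an$.

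Combining the two identifications, the analytification map $\sho_X(U)\to\sect(U;\rmH^0\Otempx)$ is bijective; in particular it is surjective, so every tempered holomorphic function on $U$ is the analytification of a (necessarily unique) regular function, that is, is regular. There is no genuine obstacle here: the whole mathematical content is already packaged in Theorem~\ref{thm:Liouvillesheafsa}, and the only point requiring care is the elementary identification of $\sect(U;\rmH^0\Otempx)$ with the degree-zero cohomology of the complex $\rsect(U;\Otempx)$, which is immediate from the fact that the Dolbeault complex lives in non-negative degrees.
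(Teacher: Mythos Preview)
Your proof is correct and is exactly the intended deduction: the paper states this corollary without proof, treating it as immediate from Theorem~\ref{thm:Liouvillesheafsa}, and you have simply spelled out the passage from the derived isomorphism $\Ozx\isoto\roim{\rho_\tz}\Otempx$ to the statement about sections over a single Zariski open $U$. The only extra care you take --- identifying $\sect(U;\rmH^0\Otempx)$ with $H^0\rsect(U;\Otempx)$ via the non-negativity of the Dolbeault complex --- is a routine check that the paper leaves implicit.
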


\begin{corollary}
Let $X$ and $Y$ be two smooth algebraic varieties and $f \cl X^\an \to Y^\an$ an holomorphic map. Assume that $Y$ is affine and that $f$ induces a morphism of ringed $\TP$-spaces
\begin{align*}
f \cl X^\temp \to Y^\temp, \quad \Op_{Y^\temp} \ni U \mapsto \opb{f}(U) \in \Op_{X^\temp}\\
\rmH^0(f^{\temp\, \sharp}) \cl \opb{f}\rmH^0(\Otempy) \to \rmH^0(\Otempx), \quad \varphi \mapsto \varphi \circ f .
\end{align*}
Then $\varphi$ is the analytification of a regular morphism.
\end{corollary}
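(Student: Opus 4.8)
The plan is to use Theorem~\ref{thm:Liouvillesheafsa} to identify global tempered holomorphic functions with global regular functions, to convert the hypothesis into a $\C$-algebra homomorphism $\sho_Y(Y)\to\sho_X(X)$, and then to algebraize $f$ using that $Y$ is affine; the conclusion is that $f$ is the analytification of a (unique) regular morphism $X\to Y$.

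First I would establish that $\sect(X^\temp;\rmH^0(\Otempx))\simeq\sho_X(X)$, the ring of global regular functions on $X$, compatibly with analytification. By Theorem~\ref{thm:Liouvillesheafsa} applied to $X$, the object $\roim{\rho_\tz}\Otempx$ is concentrated in degree $0$ and the morphism~\eqref{mor:Liouvillesa} is an isomorphism $\Ozx\isoto\roim{\rho_\tz}\Otempx$; hence $\rsect(X^\temp;\Otempx)\simeq\rsect(\Xz;\sho_X)$. Since $\Otempx$ is represented by the complex~\eqref{eq:dolbeault2}, which sits in non-negative degrees, the hypercohomology spectral sequence $E_2^{p,q}=\rmH^p(X^\temp;\rmH^q(\Otempx))\Rightarrow\rmH^{p+q}\rsect(X^\temp;\Otempx)$ has $E_2^{0,0}$ as its only term of total degree $0$, and all differentials into or out of $E_2^{0,0}$ vanish for degree reasons; therefore $\sect(X^\temp;\rmH^0(\Otempx))\simeq\rmH^0\rsect(X^\temp;\Otempx)\simeq\sho_X(X)$. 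Tracing through $L_\temp$ and~\eqref{eq:opbCtm}, this isomorphism sends $h\in\sho_X(X)$ to the section of $\rmH^0(\Otempx)$ whose underlying holomorphic function on $X^\an$ is $h^\an$. The same holds with $Y$ in place of $X$.

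Taking global sections in $\rmH^0(f^{\temp\,\sharp})\cl\opb f\rmH^0(\Otempy)\to\rmH^0(\Otempx)$ and precomposing with the canonical map $\sect(Y^\temp;\rmH^0(\Otempy))\to\sect(X^\temp;\opb f\rmH^0(\Otempy))$ then yields, via the previous paragraph, a ring homomorphism $\sho_Y(Y)\to\sho_X(X)$, $\varphi\mapsto g_\varphi$, determined by the property $g_\varphi^\an=\varphi^\an\circ f$ in $\sho_{X^\an}(X^\an)$. Now I use that $Y$ is affine: choose a closed immersion $\iota\cl Y\hookrightarrow\BBA^N_\C$ and let $y_1,\dots,y_N\in\sho_Y(Y)$ be the pullbacks of the coordinate functions. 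The tuple $g=(g_{y_1},\dots,g_{y_N})\cl X\to\BBA^N_\C$ is a regular morphism with $g^\an=\iota^\an\circ f$. Since $f$ has image in $Y^\an$, so does $g^\an$; hence the closed subscheme $g^{-1}(Y)\subseteq X$ contains every $\C$-point of $X$, so $g^{-1}(Y)=X$ because $X$ is reduced. Thus $g$ factors uniquely as $g=\iota\circ\tilde g$ with $\tilde g\cl X\to Y$ regular, and $\iota^\an\circ\tilde g^\an=g^\an=\iota^\an\circ f$ together with the injectivity of $\iota^\an$ forces $\tilde g^\an=f$. Finally $\tilde g^\temp$ agrees with the given morphism of ringed $\TP$-spaces, since both are $U\mapsto\opb f(U)$ on objects and composition with $f$ on $\rmH^0$.

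The main obstacle is the first step: one must handle with care the passage between $\rsect(X^\temp;\Otempx)$, the spectral sequence, and the sheaf $\rmH^0(\Otempx)$, and check that the resulting identification $\sho_X(X)\simeq\sect(X^\temp;\rmH^0(\Otempx))$ is indeed realized by analytification through $L_\temp$. Once this is in place, the hypothesis produces genuine tempered holomorphic functions $g_\varphi$, which are regular by Theorem~\ref{thm:Liouvillesheafsa}, and the remaining algebraization is the classical argument for morphisms into an affine variety.
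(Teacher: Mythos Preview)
Your proof is correct and follows essentially the same route as the paper: embed the affine target $Y$ into $\BBA^N_\C$, pull back the coordinate functions along $f$, observe that these are global tempered holomorphic functions on $X$ and hence regular by Theorem~\ref{thm:Liouvillesheafsa}, and conclude that $f$ is algebraic. The paper's proof is a terse sketch of exactly this argument; you have supplied the details it omits, notably the spectral-sequence justification of $\sect(X^\temp;\rmH^0(\Otempx))\simeq\sho_X(X)$ and the verification that the resulting regular map $X\to\BBA^N_\C$ factors through $Y$.
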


\begin{proof}
Since $Y$ is affine, there is an algebraic closed immersion $j_Y \cl Y \to \mathbb{A}_\C^n$. By composition, we get a tempered morphism $j_Y f \cl X \to \mathbb{A}_\C^n$ and applying this morphism to the coordinate functions $x_i \cl \mathbb{A}^n_\C \to \C$ with $1 \leq i \leq n$, we get that  $x_i( j_Y f )\in \Otempy(Y)=\Ozy(Y)$. Thus, $f$ is algebraic.
\end{proof}

\section{Stein tempered analytification}\label{sec:sta}

\subsection{The subanalytic Stein topology}

In view of Conjecture \ref{claim:dolbeautvan} it is  natural to introduce the $\sas$-topology defined below. In all of Section \ref{sec:sta}, we are not assuming Conjecture \ref{claim:dolbeautvan} unless explicitely stated.  

\begin{definition}
Let $X$ be a complex manifold. We denote by $\Xsas$ the presite for which the open sets are the finite unions of  $U\in\Op_\Xsa$ with $U$ Stein. We endow $\Xsas$ with the topology induced by $\Xsa$. 
\end{definition}
We have the natural morphisms of sites  
\eqn
\xymatrix{
X \ar@/_1pc/[rr]_-{\rhosasa} \ar[r]^-{\rhosa}&\Xsa \ar[r]^-{\rhosas} & \Xsas
}
\eneqn
where $\rhosasa\eqdot \rhosas \circ \rhosa$.

The morphism of sites $\rhosas$ induces the following pairs of adjoint functors
\eq\label{eq:fctrhosas}
\xymatrix{
  \opb{\rhosas} \cl \md[\cor_{X_\sas}]\ar@<.5ex>[r]& \md[\cor_\Xsa]\ar@<.5ex>[l] \cl \oim{\rhosas}
}
\eneq
and 
\eq
\xymatrix{
\opb{\rhosas} \cl \Derp(\cor_{X_\sas}) \ar@<.5ex>[r]& \Derp(\cor_\Xsa)\ar@<.5ex>[l] \cl \roim{\rhosas}.
}
\eneq
Note that the functors $\oim{\rhosas}$ and $\oim{\rhosa}$ commute with $\hom$ and preserve injectives. The last observation implies that $\roim{\rhosasa}\simeq \roim{\rhosas}\circ \roim{\rhosa}$.

\begin{proposition}
\banum
\item The functor $\oim{\rhosasa}$ and $\roim{\rhosasa}$ are fully faithful.
\item The functor $\oim{\rhosas}$ and $\roim{\rhosas}$ are fully faithful.
\eanum
\end{proposition}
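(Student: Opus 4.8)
The plan is to reduce the whole Proposition to the single assertion that the co-unit $\opb{\rhosas}\oim{\rhosas}F\to F$ is an isomorphism for every $F\in\md[\cor_\Xsa]$, and then to establish that co-unit isomorphism by analysing the inclusion $\Op_{\Xsas}\into\Op_\Xsa$ of underlying categories. The reductions come first. For (a): pushforward being functorial, $\oim{\rhosasa}\simeq\oim{\rhosas}\circ\oim{\rhosa}$, and $\oim{\rhosa}$ is already known to be fully faithful; likewise $\roim{\rhosasa}\simeq\roim{\rhosas}\circ\roim{\rhosa}$ (recorded just above) with $\roim{\rhosa}$ fully faithful. Hence (a) follows formally once (b) is known. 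For the derived half of (b): $\oim{\rhosas}$ preserves injectives (recorded above) and is right adjoint to the exact functor $\opb{\rhosas}$, so $\roim{\rhosas}G$ is computed by applying $\oim{\rhosas}$ to an injective resolution of $G$; since a right adjoint is fully faithful exactly when its co-unit is invertible, and $\opb{\rhosas}$ is exact, the derived co-unit $\opb{\rhosas}\roim{\rhosas}\to\id$ is invertible as soon as the non-derived one is. So everything comes down to the non-derived statement.

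To prove that, I would use that $\opb{\rhosas}\oim{\rhosas}F$ is, by construction, the sheafification for the $\Xsa$-topology of the presheaf $V\mapsto\indlim F(W)$, the inductive limit running over the $W\in\Op_\Xsas$ with $V\subseteq W$ (transition maps given by restriction), the co-unit being induced by the restrictions $F(W)\to F(V)$. On the objects $W\in\Op_\Xsas$ this presheaf already agrees with $F$, so the point is that the morphism of presheaves is a local isomorphism for the $\Xsa$-topology. Unwinding, this amounts to: for every $V\in\Op_\Xsa$ and every $s\in F(V)$ there is a \emph{finite} covering $V=\bigcup_{j}V_j$ in $\Xsa$ together with $W_j\in\Op_\Xsas$ satisfying $V_j\subseteq W_j\subseteq V$ (so that $s\vert_{W_j}$ is the needed extension), plus the corresponding statement giving injectivity. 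In turn this follows at once from the geometric claim that every relatively compact subanalytic open subset $V$ of a complex manifold is a \emph{finite} union of Stein relatively compact subanalytic open subsets of $V$ — i.e. that the objects of $\Op_\Xsas$ contained in $V$ already cover $V$ in $\Xsa$. Granting this, the argument is word for word the one showing $\oim{\rhosa}$ is fully faithful.

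The main obstacle is this covering claim. For $\rhosa$ the analogous input is essentially trivial — relatively compact subanalytic opens form a basis for the topology of $X$ — but here a relatively compact subanalytic open may be pseudoconcave along part of its boundary, and it is not clear a priori that finitely many Stein opens contained in $V$ cover $V$ (as opposed to infinitely many small balls). I would attack it through subanalytic geometry: fix a finite atlas of holomorphically convex coordinate charts covering $\overline V$, take a subanalytic triangulation of a compactification adapted both to $V$ and to this atlas so that $V$ is a finite union of subanalytic opens each contained in one chart, and then, using the local conic structure of subanalytic sets together with the fact that convex — hence Stein — subanalytic opens form a neighbourhood basis inside a chart, refine these finitely many pieces into a finite subanalytic Stein covering of $V$; the delicate step is keeping the refinement both finite and contained in $V$ near the concave part of $\partial V$. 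Should a decomposition of this strength turn out to be unavailable in the required form, the fallback is to extract the full faithfulness of $\oim{\rhosas}$ directly from the general formalism of morphisms of sites attached to a fully faithful, continuous embedding of presites carrying the induced Grothendieck topology, again by checking invertibility of the co-unit; once that is in hand, the reductions of the first paragraph conclude both (a) and (b).
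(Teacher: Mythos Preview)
Your reductions are sound, but you have inverted the logical order relative to the paper, and in doing so you have manufactured a difficulty that is not there. The paper proves (a) \emph{first} and then deduces (b), not the other way around. The point is that for (a) one is comparing sheaves on the ordinary site $X$, where isomorphisms are detected on stalks; since every point of $X$ admits a fundamental system of neighbourhoods consisting of relatively compact subanalytic Stein open sets (small coordinate polydiscs), the counit $\opb{\rhosasa}\roim{\rhosasa}\to\id$ is an isomorphism by a trivial cofinality argument at each stalk. This is the content of \cite[Proposition~6.6.1]{KS01} and requires no covering lemma whatsoever. Part (b) then follows formally: from $\oim{\rhosasa}=\oim{\rhosas}\circ\oim{\rhosa}$ with both $\oim{\rhosasa}$ and $\oim{\rhosa}$ fully faithful one extracts the full faithfulness of $\oim{\rhosas}$ --- the implicit extra input here is that $\opb{\rhosa}$ is conservative (equivalently, the site $\Xsa$ has enough points, all coming from $X$), so that the counit $\opb{\rhosas}\oim{\rhosas}G\to G$ can again be tested on stalks, where it reduces to the same cofinality already used in (a).

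Your route, by contrast, attacks (b) head-on by trying to show that every $V\in\Op_\Xsa$ admits a \emph{finite} cover by Stein subanalytic opens contained in $V$. This is exactly the statement that $\Op_\Xsas$ is a dense subsite of $\Xsa$ in the sense of the comparison lemma, and you correctly identify it as the crux of your argument. But this finite-cover statement is genuinely delicate near concave boundary (your own sketch via triangulations and local conic structure is plausible but far from a proof), and --- more importantly --- it is entirely unnecessary: the stalkwise argument only needs the Stein subanalytic opens to be cofinal \emph{at each point}, which is obvious. In short, your ``main obstacle'' is self-imposed; the paper sidesteps it completely by reversing the order and exploiting that $\Xsa$ has enough points.
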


\begin{proof}

\banum
\item This follows from the fact that any point $x \in X$ has a fundamental system of neighbourhood composed of Stein subanalytic open sets. For details see \cite[Proposition 6.6.1]{KS01}. This, together with the exactness of $\opb{\rhosasa}$ implies that $\opb{\rhosasa}\,\roim{\rhosasa} \to \id$ is an isomorphism. 

\item By definition of $\rhosasa$, $\oim{\rhosasa}= \oim{\rhosas} \oim{\rhosa}$. Since $\oim{\rhosasa}$ and $\oim{\rhosa}$ are fully faithful, the functor $\oim{\rhosas}$ is fully faithful. The proof for $\roim{\rhosas}$ is similar.
\eanum

\end{proof}

We will also need for technical matters a variant of the sas topology. These two topologies have equivalent categories of sheaves.
\begin{definition}
We denote  by $\Xsat$ the presite for which the open sets are the $U\in\Op_\Xsa$ with $U$ a Stein open subset. We endow $\Xsat$ with the topology induced by $\Xsa$.
\end{definition}

We have the natural morphisms of sites  
\eqn
\xymatrix{
\Xsas \ar[r]^-{\rhosat}& \Xsat
}
\eneqn

The functor $\oim{\rho_\sat}\cl \md[\cor_\Xsas] \to[\sim] \md[\cor_\Xsat]$ is an equivalence of categories the inverse of which $\rho^\ddagger_\sat\cl \md[\cor_\Xsat] \to[\sim] \md[\cor_\Xsas]$ is given by
\eqn 
&& \rho^\ddagger_\sat F(U) \eqdot \prolim[\stackrel{U_i \to U}{U_i \in \Op_\Xsat}]F(U_i).
\eneqn
More generally, the topos associated to $\Xsas$ and $\Xsat$ are equivalent.

We set
\eq \label{eq:notsas}
\Otsx\eqdot \rmH^0(\roim{\rhosas}\Otx)
\eneq

\begin{remark}\label{rem:concentration} Assuming the Conjecture \ref{claim:dolbeautvan}, we deduce that the object  $\roim{\rhosas}\Otx \in\Derb(\C_\Xsas)$ is concentrated in degree $0$.
To check it, it is enough to prove that any $U\in\Op_\Xsas$ admits a finite covering $U=\bigcup_iU_i$ with 
$\rsect(U_i;\roim{\rhosas}\Otx)$ concentrated in degree $0$. We may assume $U$ is Stein and we cover $\ol U$ with a finite union 
of open sets $V_i$, $i\in I$, such that $V_i\in\Op_\Xsa$, $V_i$ is Stein and $V_i$ is contained and relatively compact in a chart $\phi_i\cl W_i\isoto W'_i\subset\C_\sa^n$. Set $U_i=U\cap V_i$ and $U'_i=\phi_i(U_i)$. Then 
$\rsect(U_i;\roim{\rhosas}\Otx)\simeq \rsect(U'_i;\Ot[\C^n])$ and this last complex is concentrated in degree $0$. 

This justifies the notation~\eqref{eq:notsas}.
\end{remark}

Let $U\in\Op_\Xsas$. We endow $U$ with the topology induced by $\Xsas$ and we denote this site by $U_\Xsa$. There is a natural morphsim of sites $ U_\sas\to U_{X_\sas}$. In general, this morphism is not an equivalence of site.

\begin{proposition}\label{prop:morphcompactsas}
Consider a morphism of complex manifolds $f\cl X\to Y$ and let $U \in \Op_{\Xsas}$ and $V \in \Op_{\Ysas}$. Assume that $f$ induces an isomorphism of complex analytic manifolds $f_U \cl U \isoto V$. Then, $f_U$ induces 
\banum
\item  an isomorphism of sites ${f_U} \cl U_{X_{\sas}} \to V_{Y_{\sas}}$\,, given by the functor 
\eqn
f_U^t \cl \Op_{U_{X_\sas}} \to \Op_{V_{Y_{\sas}}}, \quad W \mapsto \opb{{f_U}}(W)=\opb{f}(W) \cap U,
\eneqn
\item an ismorphism of sheaves of rings
\eq\label{iso:otsas}
\oim{{f_U}}\Ot[X_{\sas}]\vert_{U_{X_{\sas}}} \simeq \Ot[{Y_{\sas}}]\vert_{V_{Y_{\sas}}}.
\eneq
\eanum
\end{proposition}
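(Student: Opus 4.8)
## Proof proposal for Proposition \ref{prop:morphcompactsas}

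The plan is to bootstrap this statement from the already-established Proposition \ref{prop:morphcompact} for the ordinary subanalytic sites, using the fact that both the $\sas$- and $\sa$-topologies on an open set $U$ record essentially the same sheaf data once one works with the induced sites. First I would observe that part (a) is elementary: since $f$ is a morphism of complex manifolds inducing an isomorphism $f_U \cl U \isoto V$, its inverse $g_U = f_U^{-1}$ is again a morphism of complex manifolds, hence real analytic, so both $f_U$ and $g_U$ send subanalytic open subsets to subanalytic open subsets (preimage of a subanalytic set under a real analytic map is subanalytic, as used repeatedly above) and preserve the Stein property, being biholomorphisms. Therefore the functor $W \mapsto \opb{f}(W)\cap U$ carries $\Op_{U_{X_\sas}}$ to $\Op_{V_{Y_\sas}}$ and is an equivalence of the underlying presites; that it is compatible with the (finite-cover) Grothendieck topologies is immediate since $f_U$ is a homeomorphism onto $V$, so it carries finite covers to finite covers. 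This gives the isomorphism of sites in (a).

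For part (b), the idea is to reduce~\eqref{iso:otsas} to the corresponding statement~\eqref{eq:KS314b} at the level of the ordinary subanalytic sites, which is exactly the content of Proposition \ref{prop:morphcompact}. First I would note that by Remark \ref{rem:concentration} (or directly, working Stein-chart by Stein-chart, without invoking Conjecture \ref{claim:dolbeautvan}) the sheaf $\Ot[X_\sas] = \rmH^0(\roim{\rhosas}\Otx)$ on a Stein open set $U'$ is computed by $\rsect(U';\Ot[X_\sas])\simeq \rsect(U';\roim{\rhosas}\Otx)\simeq \rsect(U';\Otx)$, and similarly for $Y$; more precisely, $\Ot[X_\sas]\vert_{U_{X_\sas}}$ is obtained from $\Ot[X_\sa]\vert_{U_{X_\sa}}$ by restriction along the morphism of sites $U_{X_\sas}\to U_{X_\sa}$ (on Stein opens the two agree). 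The value of $\Ot[X_\sas]$ on a non-Stein finite union $W=\bigcup_i W_i$ of Stein subanalytic opens is then forced by the sheaf axiom (Proposition \ref{prop:MVsa1}) from its values on the $W_i$ and their intersections, all of which are Stein; since $f_U$ is a homeomorphism it matches these Mayer--Vietoris data on the $X$-side with those on the $Y$-side. So it suffices to produce the isomorphism on Stein opens $W\in\Op_{U_{X_\sas}}$, and there one has $\Ot[X_\sas](W)\simeq\Ot[X_\sa](W)$, $\Ot[Y_\sas](\opb{g_U}(W))\simeq\Ot[Y_\sa](\opb{g_U}(W))$, so Proposition \ref{prop:morphcompact}~(b) applied with the open sets $W$ and $f_U(W)$ gives the desired identification, which is a ring isomorphism by Remark \ref{rem:ringstructsa} (the multiplicative structure is pointwise multiplication of tempered holomorphic functions, transported through the biholomorphism $f_U$).

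The main obstacle, and the step I would be most careful about, is the passage between the $\sas$-topology and the $\sa$-topology on the induced site $U_{X_\sas}$: one must check that restricting $\Ot[X_\sa]$ along $U_{X_\sas}\to U_{X_\sa}$ really gives $\Ot[X_\sas]\vert_{U_{X_\sas}}$ and not merely an associated sheaf, and that this restriction commutes with $\oim{f_U}$ and with taking $\rmH^0$. This is where I would lean on the equivalence of topoi between $\Xsas$ and $\Xsat$ recorded above, together with the explicit description of $\rho^\ddagger_\sat$ by a projective limit over Stein subanalytic opens refining a given one: it shows that a sheaf on $\Xsas$ is determined by its restriction to Stein opens, which is precisely what makes the chart-by-chart comparison with $\Ot[X_\sa]$ legitimate and functorial in $f$. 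Once this bookkeeping is in place, everything else is a formal consequence of Proposition \ref{prop:morphcompact} and the sheaf axiom.
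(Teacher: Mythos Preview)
Your approach is correct and rests on the same reduction as the paper's: both (a) and (b) are obtained from Proposition~\ref{prop:morphcompact} together with Remark~\ref{rem:ringstructsa}. The difference is in execution. You argue section-by-section on Stein opens and then invoke the sheaf axiom for finite unions, whereas the paper proceeds in one categorical step: it takes the ring isomorphism~\eqref{eq:KS314ring}
\[
\oim{f\vert_U}\rmH^0(\Otx)\vert_{U_{X_\sa}} \;\simeq\; \rmH^0(\Oty)\vert_{V_{Y_\sa}}
\]
on the ordinary subanalytic induced sites, and then simply applies $\oim{\rho_{V_{Y_\sas}}}$, using the evident commutative cube of sites relating $\Xsa,\Xsas,U_{X_\sa},U_{X_\sas}$ to $\Ysa,\Ysas,V_{Y_\sa},V_{Y_\sas}$ via $f$ and $f_U$. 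This bypasses entirely the Mayer--Vietoris bookkeeping and the $\Xsat$/$\Xsas$ comparison you worry about in your last paragraph.

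One wobble to fix in your write-up: the chain $\rsect(U';\Ot[X_\sas])\simeq \rsect(U';\roim{\rhosas}\Otx)$ is exactly what would require Conjecture~\ref{claim:dolbeautvan}, so you should not invoke it. What you actually need, and what holds unconditionally, is the sheaf identity $\Ot[X_\sas]=\rmH^0(\roim{\rhosas}\Otx)\simeq \oim{\rhosas}\rmH^0(\Otx)$, which follows from the hypercohomology spectral sequence because $\Otx$ is represented by the Dolbeault complex~\eqref{eq:dolbeault2} sitting in non-negative degrees and $\oim{\rhosas}$ is left exact. With that identification your section-level comparison $\Ot[X_\sas](W)\simeq \rmH^0(\Otx)(W)$ is immediate for any $W\in\Op_{\Xsas}$, and the rest of your argument goes through.
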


\begin{proof}
\banum

\item clear.

\item I follows from Proposition \ref{prop:morphcompact} and Remark \ref{rem:ringstructsa} that we have an isomorphism of rings
\eq\label{iso:prelimsas}
\oim{f\vert_U}\rmH^0(\Otx)\vert_\Uxsa &\simeq &\rmH^0(\Otx)\vert_\Vysa.  
\eneq

Applying the functor $\oim{\rho_{V_{Y_\sas}}}$ to the isomorphism \eqref{iso:prelimsas} and using the below commutative diagram of morphism of sites provides the isomorphism \eqref{iso:otsas}.

\eqn
\xymatrix {  & \Ysa \ar[rr]^-{\rhosas} \ar[dd]_(0.65){j_{V_\Xsa}}|!{[dl];[dr]}\hole &   & \Ysas \ar[dd]^{j_{V_\Ysas}} \\
           \Xsa \ar[ru]^{f} \ar[dd]_(0.35){j_{U_\Xsa}} \ar[rr]^(.65){\rhosas} &   & \Xsas \ar[ru] \ar[dd]^(0.35){j_{U_\Xsas}} &   \\
             & V_\Ysa \ar[rr]^(0.35){\rho_{V_\Ysas}}|!{[ur];[dr]}\hole &   & V_\Ysas \\
           U_\Xsa \ar[ru]^{f_U} \ar[rr]_{\rho_{U_\Xsas}}&   & U_\Xsas \ar[ru]^{f_U} & 
    }
\eneqn

\eanum
\end{proof}

\subsection{The Stein tempered analytification functor}
We keep the notation of Section \ref{section:tempered}. The aim of this section is to construct the Stein tempered analytification functor. For that purpose, we need the following lemma.

\begin{lemma}\label{lem:indepsas}
Let $X$ be a smooth algebraic variety and $Y$ a smooth algebraic compactification of $X$.
\banum
\item The site $X_{Y_\sas}$ does not depend on the choice of a smooth algebraic compactification $Y$ of $X$,

\item The sheaf of rings $\Ot[Y_{\sas}]\vert_{X_\Ysas} \in \Mod(\cor_{X_\Ysas})$ does not depend on the choice of an algebraic compactification $Y$ of $X$.
\eanum
\end{lemma}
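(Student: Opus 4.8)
The statement is the Stein analogue of Lemma~\ref{lem:indep}, and I would prove it by the same strategy, simply replacing Proposition~\ref{prop:morphcompact} with its Stein counterpart Proposition~\ref{prop:morphcompactsas}. The key input is Diagram~\eqref{diag:dominationcomp}: given two smooth algebraic compactifications $j_1\cl X\hookrightarrow Y_1$ and $j_2\cl X\hookrightarrow Y_2$, there is a third smooth compactification $Y_0$ of $X$ dominating both, via proper morphisms $q_1\cl Y_0\to Y_1$ and $q_2\cl Y_0\to Y_2$, each of which restricts to an isomorphism of complex analytic manifolds $q_i\cl j_0(X)\isoto j_i(X)$. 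By symmetry it suffices to compare the datum attached to $Y_0$ with the one attached to $Y_1$ (then likewise for $Y_2$, and conclude by transitivity).

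\textbf{Part (a).} Apply Proposition~\ref{prop:morphcompactsas}(a) to the morphism $q_1\cl Y_0\to Y_1$ with $U=j_0(X)\in\Op_{Y_{0,\sas}}$ and $V=j_1(X)\in\Op_{Y_{1,\sas}}$: since $q_1$ induces an isomorphism of complex analytic manifolds $U\isoto V$, it induces an isomorphism of sites $U_{Y_{0,\sas}}\simeq V_{Y_{1,\sas}}$. Under the identifications $U_{Y_{0,\sas}}=X_{Y_{0,\sas}}$ and $V_{Y_{1,\sas}}=X_{Y_{1,\sas}}$ (both coming from $j_0$, resp. $j_1$, being an open embedding onto its image), this says precisely that $X_{Y_{0,\sas}}\simeq X_{Y_{1,\sas}}$. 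One small point to check is that these site isomorphisms are compatible, i.e. that going $Y_1\leftarrow Y_0\to Y_2$ yields the same identification $X_{Y_{1,\sas}}\simeq X_{Y_{2,\sas}}$ as any other common domination; this follows because all such morphisms restrict over $X$ to the respective analytic open embeddings, which are canonically determined, so the induced functors on open subanalytic Stein subsets agree on the nose.

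\textbf{Part (b).} Apply Proposition~\ref{prop:morphcompactsas}(b) to the same $q_1\cl Y_0\to Y_1$: it yields an isomorphism of sheaves of rings $\oim{q_1\vert_U}\Ot[Y_{0,\sas}]\vert_{U_{Y_{0,\sas}}}\simeq\Ot[Y_{1,\sas}]\vert_{V_{Y_{1,\sas}}}$, that is, under the site identification of part~(a), $\Ot[Y_{0,\sas}]\vert_{X_{Y_{0,\sas}}}\simeq\Ot[Y_{1,\sas}]\vert_{X_{Y_{1,\sas}}}$ as sheaves of rings on $X_{Y_{\sas}}$. Combining with the same statement for $q_2$ gives the independence claim. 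The proof is therefore short: \textit{(a) is a consequence of Proposition~\ref{prop:morphcompactsas}(a) and Diagram~\eqref{diag:dominationcomp}; (b) is a consequence of Proposition~\ref{prop:morphcompactsas}(b) and Diagram~\eqref{diag:dominationcomp}.}

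\textbf{Expected obstacle.} There is no real obstacle at the level of ideas — this is a routine transcription of Lemma~\ref{lem:indep}. The only mild subtlety worth a sentence is coherence: one should make sure the identifications built from different dominating compactifications are mutually compatible so that ``does not depend on the choice'' is a well-posed statement (a cocycle-type condition). As in the non-Stein case, this is automatic because every morphism in sight restricts over $X$ to the canonical analytic identity, so all the induced equivalences of sites and sheaf isomorphisms are the canonical ones; thus the lemma can safely be asserted with the one-line proof above, exactly parallel to Lemma~\ref{lem:indep}.
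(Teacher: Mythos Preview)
Your proposal is correct and follows exactly the paper's own approach: the paper's proof is literally the two-line statement you extract at the end, namely that (a) follows from Proposition~\ref{prop:morphcompactsas}(a) and Diagram~\eqref{diag:dominationcomp}, and (b) from Proposition~\ref{prop:morphcompactsas}(b) and the same diagram. Your additional unpacking (spelling out $U=j_0(X)$, $V=j_1(X)$, and the coherence remark) is a faithful expansion of what the paper leaves implicit.
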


\begin{proof}
\banum
\item is a consequence of Proposition \ref{prop:morphcompactsas} (a) and Diagram \eqref{diag:dominationcomp}.
\item is a consequence of Proposition  \ref{prop:morphcompactsas} (b) and Diagram \eqref{diag:dominationcomp}.
\eanum
\end{proof}

As a consequence of Lemma \ref{lem:indepsas} we can state

\begin{definition}
Let $X$ be a smooth algebraic variety. The \textit{ Stein tempered analytification} (st-analytification for short) of $X$ is the ringed $\mathcal{TP}$-space $(X^\stan, \Otax)$ where
\eqn
&& X^\stan \eqdot (X(\C), \, X_{Y_\sas})  \quad \textnormal{and} \quad \Otax \eqdot\Ot[Y_\sas]|_{X_{Y_\sas}}.
\eneqn
\end{definition}

There is a natural morphism of sites $\rho_\stan^\temp \cl X^\temp \to X^\stan$ and $\Otax \simeq \oim{\rho_\stan^\temp}\rmH^0(\Otempx)$.\\

We associate to a morphsim of smooth algebriac varieties $f \cl X_0 \to X_1$ its Stein tempered analytification $f^\stan$. We first establish the following lemma.

\begin{lemma}\label{lem:morwelldefsas}
Let $f \cl X_0 \to X_1$ be a morphism of smooth algebraic varieties. Let $U \in \Op_{X_1^\stan}$. Then $(f^\an)^{-1}(U) \in \Op_{X_0^\stan}$.
\end{lemma}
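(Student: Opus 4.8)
The plan is to mimic the proof of Lemma~\ref{lem:morwelldef}, adding the book-keeping needed to keep track of Steinness. First I would choose smooth algebraic compactifications $j_0\cl X_0\into Y_0$ and $j_1\cl X_1\into Y_1$ and a regular morphism $\widehat f\cl Y_0\to Y_1$ extending $f$ and making Diagram~\eqref{diag:compmap} commute; such data exist by the facts recalled in items (i)--(iii) preceding Lemma~\ref{lem:indep}, and by Lemma~\ref{lem:indepsas} neither $X_0^\stan$ nor $X_1^\stan$ depends on these choices. By definition of the $\sas$-topology, the open set $U$ is a finite union $U=\bigcup_{j=1}^{k}U_j$ of Stein open subsets $U_j\subset X_1$ that are subanalytic in $Y_1$; since $(f^\an)^{-1}(U)=\bigcup_{j=1}^{k}(f^\an)^{-1}(U_j)$ and $\Op_{X_0^\stan}$ is stable under finite unions, it suffices to treat a single $U_j$. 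As in the proof of Lemma~\ref{lem:morwelldef}, commutativity of~\eqref{diag:compmap} gives $(f^\an)^{-1}(U_j)=\widehat f^{-1}(U_j)\cap X_0^\an$. Since $\widehat f$ is real analytic and $U_j$ is subanalytic in $Y_1$, the set $\widehat f^{-1}(U_j)$ is subanalytic in $Y_0$; intersecting with the Zariski open --- hence subanalytic --- subset $X_0^\an$ of $Y_0$ keeps it subanalytic, and $\widehat f^{-1}(\overline{U_j})$, being closed in the compact manifold $Y_0$, is compact, so $(f^\an)^{-1}(U_j)$ is relatively compact in $Y_0$.

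It then remains to show that $(f^\an)^{-1}(U_j)$ is a finite union of Stein subanalytic open sets. Cover the compact set $\overline{\widehat f^{-1}(U_j)}\subset Y_0$ by finitely many coordinate balls $B_1,\dots,B_m$, chosen small enough that each $\overline{B_i}$ lies in an affine Zariski open subset $V_i$ of $Y_0$ on which the ideal of the algebraic subset $Z_0\eqdot Y_0\setminus X_0$ is generated by finitely many regular functions $g_{i,1},\dots,g_{i,r_i}$. For each $i$, the set $S_i\eqdot\widehat f^{-1}(U_j)\cap B_i=(\widehat f\vert_{B_i})^{-1}(U_j)$ is Stein: indeed, for a holomorphic map $g\cl S\to Y_1$ out of a Stein manifold $S$ and a Stein open $V\subset Y_1$, the graph map $s\mapsto(s,g(s))$ realizes $g^{-1}(V)$ as a closed complex submanifold of the Stein manifold $S\times V$, so $g^{-1}(V)$ is Stein; apply this with $S=B_i$ (biholomorphic to a ball of $\C^{\dim Y_0}$) and $V=U_j$. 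Now $(f^\an)^{-1}(U_j)\cap B_i=S_i\setminus Z_0=\bigcup_{\ell=1}^{r_i}\bigl(S_i\setminus\{g_{i,\ell}=0\}\bigr)$, and each $S_i\setminus\{g_{i,\ell}=0\}$ is the complement in the Stein manifold $S_i$ of the principal divisor $\{g_{i,\ell}\vert_{S_i}=0\}$, hence Stein; it is moreover subanalytic in $Y_0$, being the intersection of the subanalytic set $S_i$ with the Zariski open subset $\{g_{i,\ell}\neq0\}$ of $V_i$, and relatively compact since contained in $B_i$. Taking the union over $i$, and then over $j$, exhibits $(f^\an)^{-1}(U)$ as a finite union of relatively compact Stein subanalytic open subsets of $Y_0$ contained in $X_0$, that is, as an element of $\Op_{X_0^\stan}$.

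The only point that goes beyond the proof of Lemma~\ref{lem:morwelldef} is the Steinness book-keeping of the second paragraph, which rests on the two standard facts that the preimage of a Stein open set under a holomorphic map out of a Stein manifold is Stein and that the complement of a hypersurface in a Stein manifold is Stein; the reason for working with the algebraic charts $V_i$ and the regular equations $g_{i,\ell}$, rather than with arbitrary local holomorphic generators of the ideal of $Z_0$, is precisely to ensure that all the open sets produced remain subanalytic in $Y_0$. I should note that if one reads $\Op_{X^\stan}$ as consisting of all subanalytic open subsets of $X$ --- the $\sas$-topology then differing from the $\sa$-topology only through its coverings --- the second paragraph becomes superfluous and the statement reduces verbatim to Lemma~\ref{lem:morwelldef}.
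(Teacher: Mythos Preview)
Your argument is correct, but the paper takes a noticeably shorter route for the Steinness book-keeping. Instead of covering $\overline{\widehat f^{-1}(U_j)}$ by coordinate balls $B_i$ in $Y_0$ and then removing the boundary $Z_0=Y_0\setminus X_0$ hypersurface by hypersurface, the paper simply covers $X_0$ itself by finitely many affine Zariski open subsets $V_1,\dots,V_p$. Each $V_i^\an$ is Stein (analytification of an affine variety) and, being Zariski open in $Y_0$, is automatically subanalytic there. Then $(f^\an)^{-1}(U_j)\cap V_i=(f^\an\vert_{V_i})^{-1}(U_j)$ is Stein by exactly the graph argument you use, and subanalytic as the intersection of two subanalytic sets; summing over $i$ exhibits $(f^\an)^{-1}(U_j)$ directly as a finite union of Stein subanalytic opens.

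The gain of the paper's approach is that the cover already lives inside $X_0$, so no separate step is needed to excise $Z_0$, and no local generators $g_{i,\ell}$ or complements of hypersurfaces enter. Your approach buys nothing extra here, though it is a perfectly valid alternative and makes explicit the two analytic ingredients (preimages of Stein opens and complements of hypersurfaces are Stein) that the paper's one-line appeal to affine covers packages together. Your closing remark about an alternative reading of $\Op_{X^\stan}$ is unnecessary: the paper's definition of $\Xsas$ does require the opens to be finite unions of Stein subanalytic sets, so the second paragraph (or the paper's affine-cover argument) is genuinely needed.
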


\begin{proof}
There exist smooth algebraic compactifications $Y_0$ and $Y_1$ of $X_0$ and $X_1$ such that $f$ extends to a regular morphism $\widehat{f} \cl Y_0 \to  Y_1$ such that Diagram \eqref{diag:compmap} commutes. Assume that $U \subset X_1$ is a subanalytic Stein open subset of $Y_1$.
By Lemma \ref{lem:morwelldef}, $f^{-1}(U)$ is subanalytic in $Y_0$.

The Zariski open subset $X_0$ of $Y_0$ has a finite covering $(V_i)_{1 \leq i \leq p}$ by Stein subanalytic open subsets of $Y_0$ (For instance, take the analytification of a finite open affine covering of $X_0$). Let $f_i$ be the restriction of $f$ to $V_i$. Then $f_i^{-1}(U)=\opb{f}(U) \cap V_i$ is a subanalytic Stein open subset of $Y_0$ and $f^{-1}(U)=\bigcup_{1 \leq i \leq n} f_i^{-1}(U)$. Thus $f^{-1}(U)$ is a finite union of subanalytic Stein open subsets of $Y_0$. Finally, if $U$ is a finite union of Stein open subsets $(U_i)_{1 \leq i \leq m}$ which are subanalytic in $Y_1$,  the above argument applied to each $U_i$ proves that $f^{-1}(U)$ is again a finite union of Stein open subsets of $Y_0$.
\end{proof}

Let $f\cl X_0 \to X_1$ be a morphism of smooth algebraic varieties. By Lemma \ref{lem:morwelldefsas}, the morphism of sites $f^\temp \cl X_0^\temp \to X_1^\temp$ induces a morphism 
\eqn
f^\stan \cl X_0^\stan \to X_1^\stan, \;\Op_{X_1^\stan} \ni U \mapsto (f^\an)^{-1}(U) \in \Op_{X_0^\stan}.
\eneqn

\begin{lemma}
Let $f \cl X_0 \to X_1$ be a morphism of smooth algebraic varieties. Then $f$ induces a morphism of sheaves
\eqn
f^{\stan\,\sharp} \cl \opb{(f^\stan)}\Ot[X_1^\stan] \To \Ot[X_0^\stan] , \quad \varphi \mapsto \varphi \circ f^\stan.
\eneqn
\end{lemma}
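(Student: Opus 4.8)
The plan is to construct $f^{\stan\,\sharp}$ by pushing the tempered comparison morphism $f^{\temp\,\sharp}$ of Lemma~\ref{lem:morsheaftemp} forward along the morphism of sites $\rho^\temp_\stan\cl X^\temp\to X^\stan$, exploiting the identification $\Ot[X^\stan]\simeq\oim{\rho^\temp_\stan}\rmH^0(\Ot[X^\temp])$ recorded just after the definition of the Stein tempered analytification. Writing $\rho_i$ for the morphism $\rho^\temp_\stan$ attached to $X_i$, I would first fix smooth algebraic compactifications $Y_0,Y_1$ of $X_0,X_1$ and a regular map $\widehat f\cl Y_0\to Y_1$ extending $f$ with Diagram~\eqref{diag:compmap} commutative --- Lemma~\ref{lem:indepsas} ensures the outcome is independent of these choices --- and then observe that, since $f^\temp$ and $f^\stan$ are both given on objects by $U\mapsto(f^\an)^{-1}(U)$ while $\rho_0,\rho_1$ are the inclusions of the underlying categories of opens, one has $f^\stan\circ\rho_0=\rho_1\circ f^\temp$ as morphisms of sites, whence $\oim{f^\stan}\oim{\rho_0}\simeq\oim{\rho_1}\oim{f^\temp}$.

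The construction itself is then formal, in three steps. First, I apply $\rmH^0(\,\cdot\,)$ to $f^{\temp\,\sharp}$; since $\opb{(f^\temp)}$ is exact this produces a morphism of sheaves of rings $\opb{(f^\temp)}\rmH^0(\Ot[X_1^\temp])\to\rmH^0(\Ot[X_0^\temp])$, whose mate under the adjunction $(\opb{(f^\temp)},\oim{f^\temp})$ is a ring morphism $\rmH^0(\Ot[X_1^\temp])\to\oim{f^\temp}\rmH^0(\Ot[X_0^\temp])$. Second, I apply $\oim{\rho_1}$ and substitute the isomorphisms $\Ot[X_i^\stan]\simeq\oim{\rho_i}\rmH^0(\Ot[X_i^\temp])$ and $\oim{\rho_1}\oim{f^\temp}\simeq\oim{f^\stan}\oim{\rho_0}$ from the previous paragraph, obtaining a ring morphism $\Ot[X_1^\stan]\to\oim{f^\stan}\Ot[X_0^\stan]$. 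Third, its mate under $(\opb{(f^\stan)},\oim{f^\stan})$ is the sought morphism $f^{\stan\,\sharp}\cl\opb{(f^\stan)}\Ot[X_1^\stan]\to\Ot[X_0^\stan]$, a morphism of sheaves of rings.

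It then remains to match this with the stated formula. For $U\in\Op_{X_1^\stan}$, Lemma~\ref{lem:morwelldefsas} gives $V\eqdot(f^\an)^{-1}(U)\in\Op_{X_0^\stan}$, and unwinding the two adjunctions, $f^{\stan\,\sharp}$ is, on sections, the map $\Ot[X_1^\stan](U)\to\Ot[X_0^\stan](V)$, $\varphi\mapsto\varphi\circ(\widehat f\vert_V)$; that $\varphi\circ\widehat f$ is again a tempered holomorphic section over $V$ is precisely Corollary~\ref{cor:tempprecompo} (applied to $\widehat f$ read in local charts, as in the proof of Lemma~\ref{lem:morsheaftemp}), and compactification-independence is Lemma~\ref{lem:indepsas}; so the map is $\varphi\mapsto\varphi\circ f^\stan$. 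The one place where I expect to need genuine care is the bookkeeping of the commutative square of sites and of how $\rmH^0$, $\oim{f^\stan}$ and sheafification interact in the two adjoint passages; the analytic inputs --- stability of temperedness under precomposition, and pullback of a finite union of subanalytic Stein opens being again such --- have already been isolated in Corollary~\ref{cor:tempprecompo} and Lemma~\ref{lem:morwelldefsas}, so no new difficulty should arise there.
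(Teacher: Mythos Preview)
Your proposal is correct and is essentially the same approach as the paper's: the paper's proof is the single sentence ``This follows directly from Lemma~\ref{lem:morsheaftemp},'' and what you have written is precisely a careful unpacking of that sentence, pushing $f^{\temp\,\sharp}$ along $\rho^\temp_\stan$ via the identification $\Ot[X^\stan]\simeq\oim{\rho^\temp_\stan}\rmH^0(\Ot[X^\temp])$ and the commutative square of sites. Your level of detail is far greater than the paper's, but there is no difference in strategy.
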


\begin{proof}
This follows directly from Lemma \ref{lem:morsheaftemp}.
\end{proof}

The datum of $(f^\an, f^{\stan \, \sharp})$ define a morphism of ringed $\mathcal{TP}$-space. If there is no risk of confusion, we write $f^\stan$ instead of $(f^\stan, f^{\stan \, \sharp})$. Finally, if $g \cl Y_0 \to Y_1$ is an other regular morphism, on checks that $(g \circ f)^\stan=g^\stan \circ f^\stan$ and that $\id^\stan=\id$.

The above constructions give rises to the Stein tempered analytification functor.

\begin{definition}
The functor $(\cdot)^\stan \cl\mathsf{Var_{sm}} \to \mathsf{TRgS}$, $(X, \sho_X) \mapsto (X^\stan,\Otax)$, $f \mapsto f^\stan$ is well defined and is called the functor of \textit{Stein tempered analytification} ($\mathrm{st}$-analytification for short). We denote by $\mathsf{STAn}_\C$ its image in $\mathsf{TRgS}$
\end{definition}

\begin{remark}
The interest of the st-analytification functor is that it associates to a smooth algebraic variety, a ringed $\TP$-space. The structure sheaf of this space has, under the assumption of Conjecture \ref{claim:dolbeautvan}, the same cohomology that the sheaf of regular functions of the algebraic variety under consideration. This is not the case  for the functor associating to a smooth algebraic variety the ringed $\TP$-space $(X^\temp, \rmH^0(\Otempx))$ even under the assumption of Conjecture \ref{claim:dolbeautvan}. Moreover, the st-analytification functor produces a sheaf of rings whereas an enhanced version of the t-analytification functor would produce a sheaf of $\mathbb{E}_\infty$-ring.
\end{remark}

\subsection{Comparison between the different analytification functors}

A careful examination of the construction of the functor $(\cdot)^\stan$ shows that the following functor is well defined.
\eqn
\Stn: \mathsf{TAn}_\C \to \mathsf{STAn}_\C, \quad (X^\temp, \Otempx) \mapsto (X^\stan, \Otax), \;f^\temp \mapsto f^\stan.
\eneqn
This means that $(\cdot)^\stan=\Stn \circ(\cdot)^\temp$.

We now study the relation between the usual  analytification functor, $\mathrm{t}$-analytification and $\mathrm{st}$-analytification functors.

Let $(X, \sho_X)$ be smooth algebraic variety. There is a natural morphism of sites
\eq
\rho_\stan \cl X^\an \to X^\stan.
\eneq
We associate to the st-analytification $(X^\stan,\Otax)$ of a smooth algebraic variety $(X, \sho_X)$, the ringed space $((X^\stan)^\mathrm{Top},\opb{\rho}_\stan\Otax)$. It is clear that the assignment

\eqn
\mathrm{u} \cl \mathsf{STAn}_\C \to \mathsf{An}_\C, \quad (X^\stan,\Otax)\mapsto ((X^\stan)^\mathrm{Top},\opb{\rho}_\stan\Otax)
\eneqn
is a well defined functor.

We remark that that the topological space $(X^\stan)^\mathrm{Top}$ is the underlying topological space of the complex manifold $(X^\an,\sho_{X^\an})$ since $X^\stan$ contains a basis of the topology of $X^\an$ and $X^\stan \subset X^\an$. Moreover, there is a morphism of sheaves
\eqn
\Otax \to \oim{\rho_\stan} \sho_{X^\an}
\eneqn
which induces by adjunction a morphism
\eq\label{mor:desan}
\opb{\rho_\stan}\Otax \to  \sho_{X^\an}.
\eneq
It follows from the fact that $\Ot[X^\stan_x]\simeq \sho_{X_x}$ that the morphism \eqref{mor:desan} is an isomorphism of sheaves of rings.
This implies the following proposition.

\begin{theorem}
The below diagram is quasi-commutative.
\begin{equation*}
\xymatrix{
\mathsf{TAn}_\C \ar[r]^-{\Stn}& \mathsf{STAn}_\C \ar[d]^-{\mathrm{u}} \\
\mathsf{Var_{sm}} \ar[u]^-{(\cdot)^\temp} \ar[ru]^-{(\cdot)^\stan} \ar[r]_-{(\cdot)^\an}& \mathsf{An}_\C
}
\end{equation*}
\end{theorem}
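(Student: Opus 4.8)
The plan is to treat the two triangles of the diagram separately. The upper triangle is the \emph{strict} identity of functors $(\cdot)^\stan=\Stn\circ(\cdot)^\temp$, which was already recorded just before the statement and is immediate from the definitions of $\Stn$, of $(\cdot)^\temp$ and of $(\cdot)^\stan$; so the whole content is the commutativity up to natural isomorphism of the lower triangle, i.e. a natural isomorphism $\mathrm u\circ(\cdot)^\stan\simeq(\cdot)^\an$ of functors $\mathsf{Var_{sm}}\to\mathsf{An}_\C$.

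First I would construct, for each smooth algebraic variety $X$, an isomorphism of ringed spaces $\mathrm u(X^\stan,\Otax)=\bigl((X^\stan)^{\mathrm{Top}},\opb{\rho_\stan}\Otax\bigr)\isoto(X^\an,\sho_{X^\an})$. On underlying spaces this is the identity of the set $X(\C)$; it is a homeomorphism because the open sets of $(X^\stan)^{\mathrm{Top}}$ are the arbitrary unions of finite unions of subanalytic Stein open subsets of $X^\an$, such opens (for instance small coordinate polydiscs inside charts) form a basis of the classical topology of $X^\an$, and conversely $X^\stan\subset X^\an$; this is exactly the observation noted before the statement. On structure sheaves I would take the canonical morphism $\opb{\rho_\stan}\Otax\to\sho_{X^\an}$ of~\eqref{mor:desan} and check that it is an isomorphism stalkwise. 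Its stalk at a point $x$ is the natural map $\varinjlim_{U\ni x}\Otax(U)\to\sho_{X^\an,x}$, the colimit running over subanalytic Stein neighbourhoods of $x$; since each $\Otax(U)$ is the ring of tempered holomorphic functions on $U$ (by the Dolbeault presentation~\eqref{eq:dolbeault2} together with $\Otax\simeq\oim{\rho_\stan^\temp}\rmH^0(\Otempx)$), this map is bijective, because every germ of holomorphic function at $x$ has a representative on a small ball on which it is bounded, hence tempered. This is the identification $\Ot[X^\stan]_x\simeq\sho_{X^\an,x}$ invoked before the statement.

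Next I would verify naturality in $X$. For a morphism $f\colon X_0\to X_1$ in $\mathsf{Var_{sm}}$, the morphism $f^\stan=(f^\an,f^{\stan\,\sharp})$ is given on open sets by $U\mapsto(f^\an)^{-1}(U)$ and on sections by $\varphi\mapsto\varphi\circ f^\an$, this last description coming from the construction of $f^{\stan\,\sharp}$ through $f^{\temp\,\sharp}$ and Corollary~\ref{cor:tempprecompo}. Applying $\mathrm u$ therefore returns precisely the morphism of ringed spaces $(f^\an,\ \varphi\mapsto\varphi\circ f^\an)$, which under the identifications of the previous paragraph is exactly $f^\an$; the relevant square commutes because the inclusion ``tempered holomorphic functions $\hookrightarrow$ holomorphic functions'' intertwines pullback along $f^\an$ on source and target, again by Corollary~\ref{cor:tempprecompo}. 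Combining this with the upper triangle yields the asserted quasi-commutativity.

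The only genuinely non-formal ingredient is the stalk identification $\Ot[X^\stan]_x\simeq\sho_{X^\an,x}$, and I would stress that it does \emph{not} rely on Conjecture~\ref{claim:dolbeautvan}: one only uses $\rmH^0$ of the tempered Dolbeault complex and the passage to the colimit over a neighbourhood basis, where temperedness of a fixed germ on a small enough ball is automatic. Everything else is bookkeeping with the adjunction $(\opb{\rho_\stan},\oim{\rho_\stan})$ and the compatibility results established in Sections~1 and~2.
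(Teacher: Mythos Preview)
Your proof is correct and follows essentially the same approach as the paper: the paper's argument is the paragraph immediately preceding the theorem, which identifies $(X^\stan)^{\mathrm{Top}}$ with the underlying space of $X^\an$ and checks that \eqref{mor:desan} is a stalkwise isomorphism via $\Ot[X^\stan]_x\simeq\sho_{X^\an,x}$, together with the earlier identity $(\cdot)^\stan=\Stn\circ(\cdot)^\temp$. You have simply unpacked these steps in more detail and added the verification of naturality in $X$, which the paper leaves implicit.
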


\subsection{The flatness of $\Otax$ over $\Ozx$}

Let $X$ be a smooth algebraic variety and $Y$ be a smooth algebraic compactification of $X$. There is a canonical morphism of sites
\eq
\rho_\stza \cl: X^\stan \to X_\zar.
\eneq
We study the properties of this morphism.

By the results of the subsection \ref{subsec:OtemOzar}, we have the morphism \eqref{mor:Liouvilleinv}. Using that $\roim{\rho_\tz}=\roim{\rho_\stza}\roim{{\rho_\stan^\temp}}$ and taking the zero degree cohomology, we obtain the morphism

\eq
L_\stan \cl \opb{\rhostz}\Ozx \to \Otax\, , \quad \varphi \mapsto \varphi^\an.
\eneq
It is clear that $L_\stan$ does not depend of the choice of a compactification $Y$ of $X$. By adjunction, we get the morphism
\eq\label{mor:Liouville} \Ozx \to \oim{\rhoz}\Otax\, , \quad \varphi \mapsto \varphi^\an.
\eneq

\begin{corollary}\label{cor:Liouvillesheaf}
Let $X$ be a smooth algebraic variety, the morphism \eqref{mor:Liouville} is an isomorphism.
\end{corollary}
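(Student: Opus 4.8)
\section*{Proof proposal for Corollary~\ref{cor:Liouvillesheaf}}

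The plan is to reduce everything to Theorem~\ref{thm:Liouvillesheafsa} by means of the factorization $\rho_\tz = \rhostz \circ \rho_\stan^\temp$ of the morphism of sites $X^\temp \to X_\zar$, which gives $\roim{\rho_\tz}\simeq\roim{\rhostz}\roim{\rho_\stan^\temp}$, together with the fact that $\Otax$ is, by construction, the degree-zero part of $\roim{\rho_\stan^\temp}\Otempx$, i.e. $\Otax\simeq\oim{\rho_\stan^\temp}\rmH^0(\Otempx)$.

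First I would compute the target of~\eqref{mor:Liouville}. Since $\Otempx$ is represented by a complex concentrated in non-negative degrees (the Dolbeault dg-algebra~\eqref{eq:dolbeault2}), one has $\tau_{\leq 0}\Otempx\simeq\rmH^0(\Otempx)$; applying $\roim{\rho_\tz}$, which is left exact and hence left t-exact on the derived category, to the associated truncation triangle shows $\rmH^0(\roim{\rho_\tz}\Otempx)\simeq\rmH^0(\roim{\rho_\tz}\rmH^0(\Otempx))=\oim{\rho_\tz}\rmH^0(\Otempx)$. Using $\oim{\rho_\tz}\simeq\oim{\rhostz}\oim{\rho_\stan^\temp}$ and $\Otax\simeq\oim{\rho_\stan^\temp}\rmH^0(\Otempx)$, this yields a canonical isomorphism $\rmH^0(\roim{\rho_\tz}\Otempx)\simeq\oim{\rhostz}\Otax$, the right-hand side being the target of~\eqref{mor:Liouville}.

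Next I would trace through the construction recalled at the beginning of this subsection: the morphism~\eqref{mor:Liouville} is obtained from~\eqref{mor:Liouvillesa}, that is from $\Ozx\to\roim{\rho_\tz}\Otempx$, by applying $\rmH^0$ and inserting the identification above. By Theorem~\ref{thm:Liouvillesheafsa}, $\roim{\rho_\tz}\Otempx$ is concentrated in degree $0$ and~\eqref{mor:Liouvillesa} is an isomorphism; since its source and target are then both concentrated in degree $0$, taking $\rmH^0$ preserves the isomorphism, so~\eqref{mor:Liouville} is an isomorphism.

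The argument is essentially formal once Theorem~\ref{thm:Liouvillesheafsa} is available; the only point requiring care is the bookkeeping identifying~\eqref{mor:Liouville} with $\rmH^0$ of~\eqref{mor:Liouvillesa} under the isomorphism $\rmH^0(\roim{\rho_\tz}\Otempx)\simeq\oim{\rhostz}\Otax$. This, however, is exactly how~\eqref{mor:Liouville} was defined --- both morphisms originate from the morphism~\eqref{mor:Liouvilleinv} of Dolbeault dg-algebras over $Y_\zar$ via restriction, adjunction, and passage to degree-zero cohomology --- so no further input is needed. I therefore do not expect a genuine obstacle; the substance of the result already lies in Theorem~\ref{thm:Liouvillesheafsa}.
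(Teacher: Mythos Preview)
Your proposal is correct and follows the same approach as the paper, which simply states that the corollary is a direct consequence of Theorem~\ref{thm:Liouvillesheafsa}. You have merely unpacked what ``direct consequence'' means here: the identification $\oim{\rhostz}\Otax\simeq\rmH^0(\roim{\rho_\tz}\Otempx)$ via the factorization $\rho_\tz=\rhostz\circ\rho_\stan^\temp$ and the definition $\Otax\simeq\oim{\rho_\stan^\temp}\rmH^0(\Otempx)$, under which~\eqref{mor:Liouville} is $\rmH^0$ of~\eqref{mor:Liouvillesa}.
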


\begin{proof}
This is a direct consequence of Theorem \ref{thm:Liouvillesheafsa}
\end{proof}

\begin{remark}\label{rem:Liouvillefort}
Under the assumption of Conjecture \ref{claim:dolbeautvan}, one obtain the following stronger version of Corollary \ref{cor:Liouvillesheaf}.\\
\textit{
The morphism
\eqn
 \Ozx \to \roim{\rho_\stza}\Otax.
\eneqn
is an isomorphism.
}
\end{remark}

\subsubsection{A flatness result}

In this subsection, we prove the flatness of $\Otax$ over $\opb{\rhostz}\Ozx$.

\begin{lemma}\label{le:flatanzar}
Let $X$ be a smooth affine algebraic variety and let $W$ be an open subset of $X_{\mathrm{zar}}$. Then $\Ozx(W)$ is Noetherian.
\end{lemma}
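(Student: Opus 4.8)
Let $X$ be a smooth affine algebraic variety and let $W$ be an open subset of $X_{\mathrm{zar}}$. Then $\Ozx(W)$ is Noetherian.

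The plan is to reduce to the classical fact that the coordinate ring of an affine variety is Noetherian together with the standard result that localizations of Noetherian rings are Noetherian, handling the remaining case via a cohomological argument that uses the affineness of $X$. First I would recall that since $X$ is affine with coordinate ring $A\eqdot\sho_X(X)$, the ring $A$ is a finitely generated $\C$-algebra, hence Noetherian by the Hilbert basis theorem. For a principal open subset $W=X_f$ with $f\in A$, one has $\Ozx(X_f)\simeq A_f$, which is a localization of $A$ and therefore Noetherian. Since principal open subsets form a basis of the Zariski topology of an affine variety, this settles a basis of opens.

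The main work is to pass from principal opens to an arbitrary Zariski open $W\subset X$. Here I would argue as follows. Write $U=X\setminus W$ and consider the exact sequence relating $\sho_X(X)$, $\sho_X(W)$ and the local cohomology $\rmH^*_{U}(\sho_X)$; more concretely, cover $W$ by finitely many principal opens $X_{f_1},\dots,X_{f_r}$ (possible since $X$, being affine, is Noetherian as a topological space, so $W$ is quasi-compact). The restriction maps give an exact sequence
\eqn
0\to\Ozx(W)\to\bigoplus_{i=1}^r A_{f_i}\to\bigoplus_{i<j}A_{f_if_j}
\eneqn
exhibiting $\Ozx(W)$ as the kernel of a map between finitely generated modules over the Noetherian ring $A$. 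Hence $\Ozx(W)$ is a finitely generated $A$-module — wait, that is not quite right, since $A_{f_i}$ is not a finite $A$-module. The correct point is subtler: $\Ozx(W)$ need not be module-finite over $A$, but it \emph{is} a finitely generated $\C$-algebra when $X$ is affine, by a classical theorem — this holds because $W$ is obtained from $X$ by a sequence of principal localizations glued along an affine scheme, and for $X$ affine the global sections of the structure sheaf over any open is still a finitely generated algebra precisely when the complement has codimension behaving well; in general, however, $\Ozx(W)$ can fail to be finitely generated (Nagata), so I would \emph{not} claim that. Instead, the clean route is: $\Ozx(W)$ is the ring of global sections of a \emph{Noetherian scheme} $W$ (an open subscheme of the Noetherian affine scheme $\Spec A$), and the ring of global sections of a quasi-compact quasi-separated scheme with Noetherian underlying space need not be Noetherian in general — so one genuinely needs affineness of $X$ here only to get quasi-compactness, and must invoke a stronger input.

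Therefore the step I expect to be the real obstacle is proving Noetherianity of $\Ozx(W)$ for non-affine $W$. The honest argument is: $W\subset X=\Spec A$ is an open subscheme of a Noetherian scheme, and one applies the fact that if $Z\subset X$ is closed and $W=X\setminus Z$, then $\sho_X(W)$ fits in the exact sequence $0\to\rmH^0_Z(X;\sho_X)\to A\to\sho_X(W)\to\rmH^1_Z(X;\sho_X)\to 0$, with $\rmH^i_Z(X;\sho_X)$ finitely generated $A$-modules when $\operatorname{depth}$ is controlled; but for the \emph{Noetherianity of the ring} $\sho_X(W)$ one uses that $W$ admits a finite affine cover $\{W_i\}$ with each $\sho_X(W_i)$ Noetherian, and then that a ring $R$ with a finite cover by principal opens $R_{g_i}$ with each $R_{g_i}$ Noetherian and $\sum (g_i)=R$ is itself Noetherian (ascent of Noetherianity along a faithfully flat finite family, i.e. the standard descent lemma for the Noetherian property). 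Concretely: $\Ozx(W)\to\prod_i\Ozx(W_i)$ is faithfully flat (the $W_i$ cover $W$), each target is Noetherian, and the Noetherian property descends along a faithfully flat ring map with Noetherian target. This is the key step; once it is in place, the lemma follows by combining it with the affine case handled above. I would present the proof in exactly this order: affine coordinate ring Noetherian; principal opens via localization; finite affine (indeed principal) cover of $W$ by quasi-compactness; faithfully flat descent of the Noetherian property.
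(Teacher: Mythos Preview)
Your final argument has a genuine gap. You claim that the map $\Ozx(W)\to\prod_i\Ozx(W_i)$ is faithfully flat because the $W_i=X_{f_i}$ cover $W$. But faithful flatness of $R\to\prod_i R_{f_i}$ is equivalent to the $f_i$ generating the unit ideal of $R$, and when $W$ is not affine this can fail even though the $X_{f_i}$ cover $W$ as a scheme. The standard counterexample already appears for $X=\BBA^2_\C$ and $W=\BBA^2_\C\setminus\{0\}$: here $\Ozx(W)=\C[x,y]$, the cover is $D(x)\cup D(y)$, yet $x$ and $y$ generate the maximal ideal $(x,y)\subsetneq\C[x,y]$, so $\C[x,y]\to\C[x,y]_x\times\C[x,y]_y$ is not faithfully flat. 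Thus the descent-of-Noetherianity step is not available. More tellingly, your argument never invokes the hypothesis that $X$ is \emph{smooth}; that should have been a warning sign.

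The paper's proof uses smoothness in an essential way and takes a completely different route. Write $Z=X\setminus W$ and split it as $Z=Z_{=1}\cup Z_{\ge2}$ according to the codimension of its irreducible components. Set $\widetilde W=X\setminus Z_{=1}$, so $W\subset\widetilde W$. Because $X$ is smooth, $Z_{=1}$ is an effective Cartier divisor, hence its complement $\widetilde W$ in the affine variety $X$ is affine; in particular $\Ozx(\widetilde W)$ is Noetherian. Again because $X$ is smooth (hence normal), the algebraic Hartogs/second Riemann extension theorem gives $\Ozx(W)\simeq\Ozx(\widetilde W)$, since $\widetilde W\setminus W\subset Z_{\ge2}$ has codimension $\ge2$. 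This yields the result directly, with no descent argument needed.
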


\begin{proof}
Let $Z$ be the complement of $W$ in $X$. The set $Z$ is an algebraic subset of $X$ and $Z=Z_{\geq 2} \cup Z_{=1}$ where $Z_{\geq 2}$ is the union of the irreducible components of $Z$ of codimension at least two and $Z_{=1}$ the union of the irreducible components of codimension exactly one. Let $\widetilde{W}=X \setminus Z_{=1}$. By construction $U \subset W \subset \widetilde{W}$ and since $\widetilde{W}$ is the complement of a closed subset purely of codimension one in a smooth affine variety $\widetilde{W}$, it is affine. Since $X$ is smooth it follows from Riemann second extension theorem that $\Ozx(W) \simeq \Ozx(\widetilde{W})$. This implies that $\Ozx(W)$ is Noetherian.
\end{proof}

We will need the following result.

\begin{lemma}[{\cite[Lemma 8.13]{Porta}}]\label{PY}
Let $X$ be a Stein manifold and $U$ be a relatively compact Stein open subset of $X$. Then $\sho_X(U)$ is flat over $\sho_X(X)$.
\end{lemma}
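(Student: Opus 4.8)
The statement is a classical fact of Stein theory, and I would prove it directly from Cartan's Theorems~A and~B, reducing flatness to the vanishing of a single $\mathrm{Tor}$ group attached to a sheaf of syzygies. Set $A=\sho_X(X)$ and $B=\sho_X(U)$, so that restriction makes $B$ an $A$-algebra; by the standard ideal-wise criterion for flatness it suffices to show that $\mathrm{Tor}_1^{A}(A/I,B)=0$ for every finitely generated ideal $I=(f_1,\dots,f_k)\subset A$, equivalently that $I\tens[A]B\to B$ is injective.

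Fix such an $I$, let $\shi=\sum_i f_i\sho_X\subset\sho_X$ be the coherent ideal sheaf it generates, and let $\shk=\ker\bigl(\sho_X^{\,k}\xrightarrow{(f_i)}\sho_X\bigr)$ be the (coherent) syzygy sheaf, so that $0\to\shk\to\sho_X^{\,k}\to\shi\to0$ and $0\to\shi\to\sho_X\to\sho_X/\shi\to0$ are exact. Since $X$ is Stein, Theorem~B gives $\rmH^1(X;\shk)=0$, whence $0\to\sect(X;\shk)\to A^{k}\to\sect(X;\shi)\to0$ is exact and $\sect(X;\shi)=I$; the same argument on the Stein manifold $U$ yields $0\to\sect(U;\shk)\to B^{k}\xrightarrow{(f_i|_U)}B$ exact, with image $\sect(U;\shi)=IB$. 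Tensoring the two displayed short exact sequences with $B$ over $A$, using $\mathrm{Tor}_1^{A}(A,B)=0$ and a short diagram chase, one identifies $\mathrm{Tor}_1^{A}(A/I,B)$ with the cokernel of the natural map $\sect(X;\shk)\tens[A]B\to\sect(U;\shk)$, i.e.\ with $\sect(U;\shk)$ modulo the $B$-submodule generated by $\set{s|_U}{s\in\sect(X;\shk)}$. So the lemma comes down to showing that the restrictions to $U$ of the global sections of $\shk$ generate the $B$-module $\sect(U;\shk)$.

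This is the only step that uses relative compactness. Since $\ol U$ is compact, for each $x\in\ol U$ finitely many global sections of $\shk$ generate the finitely generated $\sho_{X,x}$-module $\shk_x$ (here Theorem~A on $X$ is used), hence, by coherence, generate $\shk$ on an open neighbourhood of $x$; covering $\ol U$ by finitely many such neighbourhoods yields $s_1,\dots,s_q\in\sect(X;\shk)$ whose restrictions generate the sheaf $\shk|_U$. Then $\sho_U^{\,q}\xrightarrow{(s_j|_U)}\shk|_U$ is an epimorphism of coherent sheaves on the Stein manifold $U$, so its kernel is coherent and has vanishing $\rmH^1(U;-)$ by Theorem~B; consequently $B^{q}\to\sect(U;\shk)$ is surjective. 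Thus $\sect(U;\shk)$ is generated over $B$ by $s_1|_U,\dots,s_q|_U$, the cokernel above vanishes, $\mathrm{Tor}_1^{A}(A/I,B)=0$, and $B$ is flat over $A$.

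I expect two points to need care. First, the homological bookkeeping in the second paragraph is delicate: one must not conflate the abstract ideal $I$ with the module $\sect(X;\shi)$ of global sections of the ideal sheaf (they agree, but only after invoking Theorem~B), and one must keep track of which arrows are restriction maps and which are base changes along $A\to B$. Second, and this is the genuine content, the finite-generation step in the third paragraph is exactly where the hypothesis $U\Subset X$ is indispensable: over a non-relatively-compact $U$ a coherent sheaf need not be generated by \emph{finitely many} global sections, the kernel appearing in the last step need not be coherent, and the vanishing of its $\rmH^1$ is no longer available.
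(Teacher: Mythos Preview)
The paper does not give its own proof of this lemma: it is stated with a reference to \cite[Lemma~8.13]{Porta} and used as a black box. So there is nothing in the paper to compare your argument against.

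That said, your proof is correct and self-contained. The reduction to $\mathrm{Tor}_1^{A}(A/I,B)=0$ for finitely generated $I$ is the standard flatness criterion, and your identification of this $\mathrm{Tor}$ with the cokernel of $\sect(X;\shk)\tens[A]B\to\sect(U;\shk)$ is accurate (it follows from tensoring the presentation $0\to\sect(X;\shk)\to A^{k}\to I\to 0$ with $B$ and comparing with the analogous exact sequence over $U$, both sequences being exact by Theorem~B). The finite-generation step via Theorem~A plus compactness of $\ol U$, followed by one more application of Theorem~B on $U$, is exactly the point where $U\Subset X$ enters, as you note. This is essentially the classical Stein-theoretic argument for this flatness statement, and is in the same spirit as the proof one finds in the cited reference.
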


\begin{lemma}
Let $X_\zar$ be a smooth affine algebraic variety and $U \in \Op_{X^\stan}$ be relatively compact Stein open subset of $X^\an$. Then $\sho_{X^\an}(U)$ is flat over $\sho_{X_\zar}(X_{\zar})$.
\end{lemma}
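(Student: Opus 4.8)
The plan is to factor the extension $\sho_{X_\zar}(X_\zar)\to\sho_{X^\an}(U)$ through an intermediate ring for which both halves are known to be flat, using Lemma~\ref{le:flatanzar}, Lemma~\ref{PY}, and the comparison results of the previous subsections. First I would reduce to the case where $U$ is a relatively compact subanalytic Stein open subset contained in a larger Stein open subset of $X^\an$: since $X$ is affine, $X^\an$ is Stein, so $U$ is a relatively compact Stein open subset of the Stein manifold $X^\an$, and Lemma~\ref{PY} applies directly to give that $\sho_{X^\an}(U)$ is flat over $\sho_{X^\an}(X^\an)$. So the task is reduced to showing that $\sho_{X^\an}(X^\an)$ is flat over $\sho_{X_\zar}(X_\zar)$, i.e.\ that the analytification map on global sections of an affine variety is flat.

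The second step is to prove that flatness. Here I would invoke the classical fact that for a smooth (more generally, any) affine variety $X$, the morphism $\sho_{X_\zar}(X_\zar)\to\sho_{X^\an}(X^\an)$ is flat; equivalently, for each point $x\in X(\C)$ the local ring $\sho_{X^\an,x}$ is flat over $\sho_{X_\zar,x}$ (this is in SGA1, Expos\'e~XII, and follows from the faithful flatness of the completion and the fact that $\sho_{X^\an,x}$ and $\sho_{X_\zar,x}$ have the same completion). Combined with Lemma~\ref{le:flatanzar}, which tells us $\sho_{X_\zar}(X_\zar)$ is Noetherian, one can check flatness locally: a finitely generated $\sho_{X_\zar}(X_\zar)$-module $M$ has $M\tens_{\sho_{X_\zar}(X_\zar)}\sho_{X^\an}(X^\an)$, and using that $\sho_{X^\an}(X^\an)=\sect(X^\an;\sho_{X^\an})$ together with flatness of the stalk maps and the exactness of $\sect(X^\an;-)$ on coherent sheaves over the Stein space $X^\an$ (Cartan's Theorem~B), one deduces that $\operatorname{Tor}_1^{\sho_{X_\zar}(X_\zar)}(M,\sho_{X^\an}(X^\an))=0$ for all finitely generated $M$, hence $\sho_{X^\an}(X^\an)$ is flat over $\sho_{X_\zar}(X_\zar)$.

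Finally I would compose the two flatness statements: $\sho_{X^\an}(U)$ is flat over $\sho_{X^\an}(X^\an)$ by Lemma~\ref{PY}, and $\sho_{X^\an}(X^\an)$ is flat over $\sho_{X_\zar}(X_\zar)$ by the previous step, so $\sho_{X^\an}(U)$ is flat over $\sho_{X_\zar}(X_\zar)$ by transitivity of flatness. The main obstacle I expect is not any single step but making sure the global-sections/Stein arguments interact correctly with Noetherianity: one needs $\sho_{X_\zar}(X_\zar)$ Noetherian (Lemma~\ref{le:flatanzar}) to reduce flatness-testing to finitely generated modules and to pass the vanishing of $\operatorname{Tor}$ through the global sections functor on the Stein space, and one should be slightly careful that $U$ being merely a finite union of Stein subanalytic opens is not needed here since the statement already assumes $U$ is itself Stein and relatively compact.
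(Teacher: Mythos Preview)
Your proposal is correct and follows essentially the same approach as the paper: both reduce via Lemma~\ref{PY} to showing $\sho_{X^\an}(X^\an)$ is flat over $\sho_{X_\zar}(X_\zar)$, and then combine Noetherianity of the coordinate ring, flatness of $\sho_{X^\an}$ over $\opb{\rhostz}\sho_{X_\zar}$ as sheaves, and exactness of $\Gamma(X^\an;\,\cdot\,)$ on coherent sheaves on the Stein space $X^\an$. The only cosmetic difference is that the paper checks exactness directly on short exact sequences of finite free modules (equivalent to your $\operatorname{Tor}_1$ formulation), and it does not invoke Lemma~\ref{le:flatanzar} for Noetherianity of $\sho_{X_\zar}(X_\zar)$ since this is immediate for an affine variety.
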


\begin{proof}
It follows from Lemma \ref{PY} that $\sho_{X^\an}(U)$ is flat over $\sho_{X^\an}(X)$. Thus it is sufficient to prove that $\sho_{X^\an}(X)$ is flat over $\Ozx(X)$. Since $\Ozx(X)$ is Noetherian, we just need to show that for every exact sequence
\eq\label{eq:shortO}
&& \Ozx(X)^L\to[A]\Ozx(X)^M\to[B]\Ozx(X)^N
\eneq
the exact sequence obtained by applying $\sho_{X^\an}(X) \tens[\Ozx(X)] (\cdot)$ to the sequence \eqref{eq:shortO} is exact. As $X$ is an affine variety the sequence \eqref{eq:shortO} is equivalent to an exact sequence of sheaves
\eq\label{eq:short1}
&&\sho_{X_\mathrm{zar}}^L\to[A]\sho_{X_\mathrm{zar}}^M\to[B]\sho_{X_\mathrm{zar}}^N.
\eneq
This provides us with the following commutative diagram
\begin{center}
\scalebox{0.8}{\parbox{\linewidth}{
\eqn
\xymatrix{
\sho_{X^\an}(X) \tens[\sho_X(X)] \sho_{X}(X)^L\ar[r]^-A \ar[d]^-{\wr} & \sho_{X^\an}(X) \tens[\sho_X(X)] \sho_{X}(X)^M \ar[d]^-{\wr} \ar[r]^-B&\sho_{X^\an}(X) \tens[\sho_X(X)]\sho_{X}(X)^N \ar[d]^-{\wr}\\
\Gamma(X;\sho_{X^\an} \tens[\opb{\rhostz}\sho_X]\opb{\rhostz}\sho_{X}^L) \ \ar[r]^-A&\Gamma(X;\sho_{X^\an} \tens[\opb{\rhostz}\sho_X]\opb{\rhostz}\sho_{X}^M)\ar[r]^-B&\Gamma(X;\sho_{X^\an} \tens[\opb{\rhostz}\sho_X]\opb{\rhostz}\sho_{X}^N).
}
\eneqn
}}
\end{center}
The bottom line of the diagram is exact since $\sho_{X^\an}$ is flat over $\Ozx$ and $\Gamma(X; \cdot)$ is exact since $X$ is Stein.
\end{proof}

\begin{lemma}\label{le:partialflatness}
Let $X$ be a smooth affine algebraic variety, let $U \in Op_{X^\stan}$ be a relatively compact Stein open subset of $X^\an$ and $W$ be a Zariski open subset of $X$ containing $U$. Then $\Otax(U)$ is flat over $\sho_{X_\zar}(W)$.
\end{lemma}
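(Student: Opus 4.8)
The plan is to reduce the flatness to an exactness property over the affine coordinate ring of $\widetilde W$, and then to the tempered Dolbeault vanishing of Conjecture~\ref{claim:dolbeautvan}.

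First I would set things up. By Lemma~\ref{le:flatanzar} the ring $R\eqdot\Ozx(W)$ is Noetherian, and its proof produces a \emph{smooth affine} open $\widetilde W\subseteq X$ with $W\subseteq\widetilde W$ and $\Ozx(\widetilde W)\simeq R$; in particular $U\subset W\subseteq\widetilde W$, and $\sect(\widetilde W;-)$ is an exact equivalence between finitely generated $R$-modules and coherent algebraic sheaves on $\widetilde W$. Set $M\eqdot\Otax(U)$. By Corollary~\ref{cor:Liouvillesheaf} one has $R=\Ozx(W)\simeq\Otax(W)$, and restriction along $U\subset W$ makes $M$ an $R$-module (equivalently: by Lojasiewicz's inequality, using $\dist(x,\partial U)\le\dist(x,X\setminus\widetilde W)$ for $x\in U$, the analytification of a regular function on $W$ is tempered on $U$). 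Since $R$ is Noetherian, $M$ is $R$-flat if and only if $M^{l}\to[A]M^{m}\to[B]M^{n}$ is exact for every exact sequence of finite free $R$-modules $R^{l}\to[A]R^{m}\to[B]R^{n}$. Realising the latter over $\widetilde W$ as an exact sequence of coherent algebraic sheaves $\sho_{\widetilde W}^{l}\to\sho_{\widetilde W}^{m}\to\sho_{\widetilde W}^{n}$, analytifying, and restricting to $U$, I obtain, setting $\shk\eqdot\ker(\sho_{U}^{m}\to[B]\sho_{U}^{n})$ and $\shq\eqdot\ker(\sho_{U}^{l}\epito\shk)$ — coherent analytic sheaves on $U$ — the short exact sequences $0\to\shk\to\sho_{U}^{m}\to\im B\to0$ and $0\to\shq\to\sho_{U}^{l}\to\shk\to0$.

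Next I would pass to tempered cohomology. For a coherent analytic sheaf $\shg$ on $U$ write $\rmH^{j}_{t}(U;\shg)$ for the $j$-th cohomology of the tempered Dolbeault complex~\eqref{eq:dolbeault2} of $U$ (viewed as a subanalytic open of $X^{\an}$) with coefficients in $\shg$; since the stalks of $\Otx$ are the local rings of $X$, no derived tensor product intervenes and $\rmH^{0}_{t}(U;\sho_{U}^{\oplus k})=M^{k}$. Then $\ker(B\colon M^{m}\to M^{n})=\rmH^{0}_{t}(U;\shk)$ and $\im(A\colon M^{l}\to M^{m})$ is the image of $M^{l}=\rmH^{0}_{t}(U;\sho_{U}^{l})\to\rmH^{0}_{t}(U;\shk)$, so exactness of $M^{l}\to M^{m}\to M^{n}$ at the middle is the surjectivity of $\rmH^{0}_{t}(U;\sho_{U}^{l})\to\rmH^{0}_{t}(U;\shk)$, which by the long exact sequence of $0\to\shq\to\sho_{U}^{l}\to\shk\to0$ holds once $\rmH^{1}_{t}(U;\shq)=0$. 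Thus the lemma reduces to
\[
(\ast)\qquad\rmH^{j}_{t}(U;\shg)=0\ \ (j>0)\quad\text{for every coherent analytic sheaf }\shg\text{ on }U.
\]
As $U$ is Stein, $\shg$ has a resolution by free $\sho_{U}$-modules of finite rank, so the hypercohomology spectral sequence reduces $(\ast)$ to the case $\shg=\sho_{U}$, i.e. to the concentration of $\rsect(U;\Otx)$ in degree $0$. Here $U$ is a relatively compact subanalytic \emph{Stein}, hence pseudo-convex, open of $X^{\an}$: I would choose a closed embedding $X^{\an}\hookrightarrow\C^{N}$ and a relatively compact subanalytic pseudo-convex open $\Omega\subset\C^{N}$ with $\Omega\cap X^{\an}=U$, and use Kashiwara's description of $\Otx$ on a submanifold together with Proposition~\ref{prop:morphcompact} to compute $\rsect(U;\Otx)$ from $\rsect(\Omega;\Ot[\C^{N}]\tens[\sho_{\C^{N}}]\shh)$ for a suitable coherent $\shh$ on $\C^{N}$; one further free resolution on $\Omega$ then reduces this to the concentration of $\rsect(\Omega;\Ot[\C^{N}])$ in degree $0$, which is exactly Conjecture~\ref{claim:dolbeautvan}. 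Granting it, $(\ast)$ holds, the connecting map above vanishes, $M^{l}\to M^{m}\to M^{n}$ is exact, and $M=\Otax(U)$ is flat over $R=\Ozx(W)$.

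The hard part is $(\ast)$ — the only non-formal input. It rests on an affine/Stein form of Conjecture~\ref{claim:dolbeautvan}, and the delicate points will be the construction of the relatively compact subanalytic pseudo-convex $\Omega\subset\C^{N}$ with $\Omega\cap X^{\an}=U$ (equivalently, the Čech patching needed to pass from a single relatively compact subanalytic pseudo-convex open of $\C^{n}$ to the Stein open $U$) and the control of subanalyticity throughout. Absent this vanishing the functor $\shg\mapsto\rmH^{0}_{t}(U;\shg)$ need not be exact, so the flatness assertion is genuinely conditional on it.
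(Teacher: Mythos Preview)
Your argument makes the lemma conditional on Conjecture~\ref{claim:dolbeautvan}, but the paper proves it \emph{unconditionally}; the section explicitly states that the conjecture is not assumed here. The analytic input you are missing is Siu's theorem \cite[Theorem~2]{Si70}: on a bounded domain, a linear system $Cu=w$ with holomorphic coefficients of polynomial growth and tempered right-hand side has a tempered solution as soon as it has a holomorphic one. The paper's proof runs as follows. Given $w\in\Otax(U)^M$ with $Bw=0$, first solve \emph{holomorphically}: the sequence $\sho_W^L\to[A]\sho_W^M\to[B]\sho_W^N$ is exact, hence so is its analytification on $U\subset W$, and Cartan~B on the Stein open $U$ yields $v\in\sho_{X^\an}(U)^L$ with $Av=w$. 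Next clear denominators: writing $W=D(f_1)\cup\cdots\cup D(f_n)$ in the affine $X$, choose $m_i$ so that $f_1^{m_1}\cdots f_n^{m_n}A$ has entries in $\Ozx(X)$; then $f_1^{m_1}\cdots f_n^{m_n}w\in\Otax(U)^M$ and the cleared system has a holomorphic solution $v$. Now Siu's theorem produces $u\in\Otax(U)^L$ with $(f_1^{m_1}\cdots f_n^{m_n}A)u=f_1^{m_1}\cdots f_n^{m_n}w$, whence $Au=w$ on $U\setminus\mathcal Z(f_1\cdots f_n)$ and hence on all of $U$. Since $\Ozx(W)$ is Noetherian (Lemma~\ref{le:flatanzar}), this exactness criterion gives flatness.

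Your route via the vanishing $(\ast)$ is not wrong as a strategy, and the d\'evissage from your $\shq$ to $\sho_U$ is fine because $\shq$ is the restriction to $U$ of the analytification of a coherent sheaf on the smooth affine $\widetilde W$, hence has a finite free resolution. The passage from $U\subset X^{\an}$ to a relatively compact subanalytic pseudoconvex $\Omega\subset\C^N$ is essentially what Remark~\ref{rem:concentration} does with local charts, so you need not construct a single global $\Omega$. But even granting all of this you obtain only a conditional statement, and the flatness of $\Otax$ over $\opb{\rhostz}\Ozx$ is used later (Theorem~\ref{th:otaxflat}) independently of the conjecture. So the genuine gap is the missing idea: replace the appeal to tempered Dolbeault vanishing by the two-step ``solve holomorphically, then upgrade to tempered via Siu'' argument.
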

\begin{proof}
Consider an exact sequence 
\eqn
&& \Ozx(W)^L\to[A]\Ozx(W)^M\to[B]\Ozx(W)^N
\eneqn
where $A$ and $B$ are two matrices with coefficients in $\Ozx(W)$. Tensoring by $\Otax(U)$, we obtain the exact sequence
\eq \label{diag:suitetemp}
&& \Otax(U)^L\to[A]\Otax(U)^M\to[B]\Otax(U)^N.
\eneq
Let $w \in \Otax(U)^M$ such that $Bw=0$. Then, by Lemma \ref{le:flatanzar}, there exists $v \in \sho_{X^\an}^L(U)$ such that $Av=w$. Since $X$ is affine  there exists $f_1, \ldots, f_n \in \Ozx(X)$ such that $W=D(f_1)\cup \ldots \cup D(f_n)$ where $D(f_i)$ is the distinguished open set associated to $f_i$. Thus, there exists $m_1, \ldots m_n \in \N$ such that $f_1^{m_1} \ldots f_n^{m_n}A$ is an $M \times L$ matrix with coefficients in $\Ozx(X)$. Moreover $f_1^{m_1} \ldots f_n^{m_n}w \in \Otax(U)^M$ since the $f_i^{m_i}$ are regular functions and $f_1^{m_1} \ldots f_n^{m_n}Av=f_1^{m_1} \ldots f_n^{m_n}w$. It follows from \cite[Theorem 2]{Si70} that there exists $u \in \Otax(U)^L$ such that $f_1^{m_1} \ldots f_n^{m_n}Au=f_1^{m_1} \ldots f_n^{m_n}w$. This implies that $Au=v$ on $U \cap D(f_1) \cap \ldots \cap D(f_n)=U \setminus \mathcal{Z}(f)$ with $f=f_1\ldots f_n$. Since $f$ is defined on $X$, $Au=v$ on $U$. It follows that the sequence \eqref{diag:suitetemp} is exact. As $\Ozx(W)$ is Noetherian, this implies that $\Otax(U)$ is flat over $\Ozx(W)$.
\end{proof}

For a sheaf $F$ on $\Xz$, that is, $F\in\md[\C_{\Xz}]$, denote by $\pshopb{\rhostz}F$ the inverse image of $F$ in the category of presheaves on $X^\stan$. Then for $U\in\Op_{X^\stan}$, 
\eqn
&&\pshopb{\rhostz}F(U)=\indlim[W]F(W)
\eneqn
where $W$ ranges over the family of objects of $\Op_{\Xz}$ such that $U\subset W$.\\

\begin{theorem}\label{th:otaxflat}
The sheaf of rings $\Otax$ is flat over $\opb{\rhostz}\Ozx$.
\end{theorem}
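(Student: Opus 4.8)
The plan is to reduce everything to Lemma~\ref{le:partialflatness} in three steps: first reduce to $X$ affine, then establish flatness \emph{section by section} on the site $X^\stan$ relative to the presheaf inverse image $\pshopb{\rhostz}\Ozx$, and finally transport this to the sheaf $\opb{\rhostz}\Ozx$ using that sheafification is exact and compatible with tensor products. For the first step: flatness of a sheaf of rings is a local property, so it suffices to prove the statement after restriction to the members $X_k$ of a Zariski affine open covering of $X$; by Lemma~\ref{lem:indepsas} (independence of the compactification) together with the compatibility of the $\mathrm{st}$-analytification with restriction to Zariski open subsets, the restriction of $\Otax$ to $(X_k)^\stan$ is $\Ot[(X_k)^\stan]$ and that of $\opb{\rhostz}\Ozx$ is the analogous inverse image on $(X_k)^\stan$, so we may assume that $X$ is a smooth affine algebraic variety.

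For the second step, I would show that for every Stein $U\in\Op_{X^\stan}$ the ring $\Otax(U)$ is flat over $\pshopb{\rhostz}\Ozx(U)$. Recall that $\pshopb{\rhostz}\Ozx(U)=\indlim[W]\Ozx(W)$, where $W$ runs over the directed set of Zariski open subsets of $X$ containing $U$, and that each $\Ozx(W)$ is Noetherian by Lemma~\ref{le:flatanzar}. By Lemma~\ref{le:partialflatness}, $\Otax(U)$ is a flat $\Ozx(W)$-module for every such $W$, compatibly with the transition maps of the system, so that its $\pshopb{\rhostz}\Ozx(U)$-module structure is the colimit of these. Now a module which is flat over every ring of a filtered system of rings is flat over the colimit of the system: this is immediate from the equational criterion for flatness, since any finite family of linear relations with coefficients in $\indlim[W]\Ozx(W)$ is already defined over some $\Ozx(W)$, where it can be resolved using flatness, and the solution maps back to the colimit. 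Hence $\Otax(U)$ is flat over $\pshopb{\rhostz}\Ozx(U)$.

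For the third step, I would pass from sections to sheaves. By the definition of $X^\stan$, every object is a finite union of Stein objects of $\Op_{X^\stan}$, so the Stein objects form a generating family for the topology. Given a monomorphism $\shf'\to\shf$ of $\opb{\rhostz}\Ozx$-modules, it is injective on sections, so for Stein $U$ the flatness just proved makes $\Otax(U)\otimes_{\pshopb{\rhostz}\Ozx(U)}\shf'(U)\to\Otax(U)\otimes_{\pshopb{\rhostz}\Ozx(U)}\shf(U)$ injective; therefore the morphism of presheaves $\Otax\otimes_{\pshopb{\rhostz}\Ozx}\shf'\to\Otax\otimes_{\pshopb{\rhostz}\Ozx}\shf$ (presheaf tensor product over the presheaf of rings $\pshopb{\rhostz}\Ozx$) has kernel presheaf vanishing on the generating family, hence zero associated sheaf. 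Since sheafification is exact and carries the presheaf tensor product over $\pshopb{\rhostz}\Ozx$ to the tensor product $\Otax\otimes_{\opb{\rhostz}\Ozx}(\scdot)$, it follows that $\Otax\otimes_{\opb{\rhostz}\Ozx}\shf'\to\Otax\otimes_{\opb{\rhostz}\Ozx}\shf$ is injective, and combined with the right-exactness of $\Otax\otimes_{\opb{\rhostz}\Ozx}(\scdot)$ this proves that this functor is exact, i.e.\ that $\Otax$ is flat over $\opb{\rhostz}\Ozx$.

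The genuinely hard input is Lemma~\ref{le:partialflatness} itself, which rests on Siu's solvability theorem; granting it, the only delicate point is the interaction of sheafification with the inverse-image presheaf, and this is exactly why I would keep the local computation phrased in terms of $\pshopb{\rhostz}$ — so as never to have to identify the sections of the sheaf $\opb{\rhostz}\Ozx$, which would force a separate analysis of sheafification and connectedness — and invoke only the exactness and monoidality of sheafification at the very end. Some care is also needed to verify that the hypotheses of Lemma~\ref{le:partialflatness} are met for the Stein objects of $\Op_{X^\stan}$ occurring here.
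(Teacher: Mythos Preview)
Your overall strategy --- reduce to a local situation, check flatness section by section via Lemma~\ref{le:partialflatness}, then pass to sheaves --- matches the paper's. Your steps 2 and 3 are essentially the same argument as the paper's (which invokes Stacks Project Tags 05UU and 03ET for flatness over a filtered colimit of rings and for the passage from presheaf-level to sheaf-level flatness, where you spell these out by hand). The gap is in your first step, the reduction to $X$ affine.

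Lemma~\ref{le:partialflatness} requires $U$ to be a \emph{relatively compact} Stein open subset of $X^\an$, with $X$ affine. After your reduction, the Stein objects $U\in\Op_{X^\stan}$ are subanalytic in a compactification $Y$ of $X$, but need not be relatively compact in $X^\an$: indeed $X^\an$ itself is such an object. So you cannot apply Lemma~\ref{le:partialflatness} to all Stein $U$, and your step 2 does not go through. Nor can you shrink the generating family to the Stein opens that \emph{are} relatively compact in $X^\an$: these do not form a covering family in $X^\stan$, because coverings there must be finite and $X^\an$ is not compact, so step 3 would fail for that smaller family. You flag this issue at the end but do not resolve it, and it is precisely the crux.

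The paper's fix is to localize differently. Rather than reducing to $X$ affine, it works on the compactification $Y$, takes a finite affine covering $(Y_i)$ of $Y$, and refines by Stein subanalytic opens $V_k$ each \emph{relatively compact in some $Y_{i_0}$}. Compactness of $Y$ ensures that finitely many $V_k$ suffice, so $\{V_k\}$ is a genuine covering in $Y_\sas$. Then every Stein $V'\subset V_k$ is automatically relatively compact in the affine $Y_{i_0}$, Lemma~\ref{le:partialflatness} applies to it, and your steps 2--3 go through on each $V_k$. The compactification is doing real work here: it is exactly what produces a finite covering by opens on which the relative-compactness hypothesis of Lemma~\ref{le:partialflatness} is forced.
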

\begin{proof}

Let $Y$ be a smooth compactification of $X$. It sufficient to prove that $\Otsy$ is flat over $\opb{\rhostz}\Ozy$ and since $\Ysat$ is a dense subsite of $\Ysas$ this is equivalent to prove that $\oim{\rhosat}\Otsy$ is flat over $\oim{\rhosat}\opb{\rhostz}\Ozy$.

Let $(Y_i)_{i \in I}$ be an open affine covering of $Y$ and $(V_{ij})_{j \in J}$ be a covering of $Y_i$ by Stein open sets relatively compact in $Y_i$ and subanalytic in $Y$. The family $(V_{ij})_{i \in I, j \in J}$ is a Stein subanalytic open covering of $Y$ and since $Y$ is compact, we can extract from it a finite covering of $Y$, say $\{V_0,\ldots,V_\alpha\}$ from $(V_{ij})_{j \in J, i \in I}$. By construction, $\lbrace V_0,\ldots,V_\alpha \rbrace$ is a covering of $Y_\sat$.

Flatness being a local property, it sufficient to show that $\Otsy\vert_{V_{kY_\sas}}$ is flat over $\opb{\rhostz}\Ozy \vert_{V_{kY_\sas}}$, $0 \leq k \leq \alpha$.

To prove this, it is sufficient by \cite[Tag 03ET]{stackproject}, to show that the presheaf $\oim{\rhosat}\Otsy\vert_{V_{kY_\sas}}$ is flat over $\oim{\rhosat}\pshopb{\rhostz}\Ozy \vert_{V_{kY_\sas}}$ (Since $\Ysat$ is a dense subsite of $\Ysas$, sheafification commutes with $\oim{\rhosat}$, see \cite[Tag 03A0]{stackproject} for more details). This means, we have to show that for every $V^\prime \in\Op_\Ysat$ such that $V^\prime \subset V_k$, $\Otsy(V^\prime)$ is flat over $\pshopb{\rhostz}\Ozy(V^\prime)$. But
\eqn
&&\pshopb{\rhostz}\Ozy(V^\prime)=\indlim[W]\Ozy(W)
\eneqn
where $W$ ranges over the family of objects of $\Op_{Y_{\zar}}$ such that $V^\prime \subset W$. 

Thus by \cite[Tag 05UU]{stackproject}, it is sufficient to prove that for such a $W$ the ring $\Otsy(V^\prime)$ is flat over $\Ozy(W)$. We can further assume that there is an $i_0 \in I$ such that all the $W$ are contained in $Y_{i_0}$ and $V_k$ is relatively compact subset of $Y_{i_0}$. The result follows then by Lemma \ref{le:partialflatness}.

\end{proof}

\subsection{Towards a tempered GAGA theorem}

In this subsection, we present some results in the direction of a tempered version of the GAGA theorem of Serre. We assume all along this subsection that Conjecture \ref{claim:dolbeautvan} holds.

One defines the functor $\spb{\rhostz}\cl\md[\Ozx]\to\md[\Otax]$ by
\eq\label{eq:defspb}
 \spb{\rhostz}(\scbul) \eqdot \Otax \tens[\opb{\rhostz}\Ozx] \opb{\rhostz}( \scbul).
 \eneq
It follows from Theorem~\ref{th:otaxflat} that this functor is exact. 
We have the pairs of adjoint functors
\eqn
\xymatrix{
\md[\Otax]\ar@<.5ex>[r]^{\oim{\rhostz}}&\md[\Ozx]\ar@<.5ex>[l]^{\spb{\rhostz}}.
}
\eneqn

\textit{Assume Conjecture \ref{claim:dolbeautvan} and let  $X$ be a smooth algebraic variety over $\C$. The functor $\spb{\rhostz}$ is exact and fully faithful when restricted to $\mdc[\Ozx]$. Its essential image is contained in $\Mod_{\textnormal{lfp}}(\Otax)$ the full subcategory of $\md[\Otax]$ spanned by the $\Otax$-modules locally of finite presentation.}

 The only things that remains to prove in the above claim is that $\spb{\rhostz}$ is fully faithful when restricted to $\mdc[\Ozx]$. Under the assumption of Conjecture \ref{claim:dolbeautvan}, this follows immediately from remark \ref{rem:Liouvillefort} and from the fact that any coherent sheaf on a smooth variety admits locally a finite free resolution.\\

One shall notice that we do not ask $X$ to be proper. Hence, this statement may be considered as a kind of weak Serre GAGA theorem in the non proper case.\\ 
We would like to conclude this paper by two questions.

\medskip
\noindent \textbf{Question 1:} Is the sheaf $\Otax$ coherent?

\medskip
\noindent \textbf{Question 2:} Is the functor $\spb{\rhostz}$ essentially surjective? 

\medskip
We make the following straightforward observation concerning Question 2. If a $\Otax$-module is locally of finite presentation on a cover formed of Zariski open subsets then it is in the essential image of $\spb{\rhostz}$.

\appendix

\providecommand{\bysame}{\leavevmode\hbox to3em{\hrulefill}\thinspace}

\begin{bibdiv}
\begin{biblist}

\bib{BM88}{article}{
   author={Bierstone, Edward},
   author={Milman, Pierre D.},
   title={Semianalytic and subanalytic sets},
   journal={Inst. Hautes \'Etudes Sci. Publ. Math.},
   number={67},
   date={1988},
   pages={5--42},
}

\bib{Bj74}{article}{
   author={Bj{\"o}rk, Jan-Erik},
   title={On extensions of holomorphic functions satisfying a polynomial
   growth condition on algebraic varieties in ${\bf C}^{n}$},
   language={English, with French summary},
   journal={Ann. Inst. Fourier (Grenoble)},
   volume={24},
   date={1974},
   number={4},
   pages={vi, 157--165 (1975)},
}

\bib{PrEd10}{article}{
author={Emundo, M\'ario J.},
author={Prelli, Luca},
title={Sheaves on $\mathcal T$-topologies},
journal={J. Math. Soc. Japan},
number={1},
volume={68},
pages={347--381},
date={2016},
}

\bib{Fr67} {article}{
author={Frisch, Jacques},
title={Points de platitude d'un morphisme d'espaces analytiques complexes},
journal={Inventiones Math.},
volume={4}, 
pages={118-138},
date={1967},
}

\bib{SG16}{book}{
   author={Guillermou, St\'ephane},
   author={Schapira, Pierre},
   title={Construction of sheaves on the subanalytic site},
   series={Astérique},
   volume={384},
   publisher={SMF},
   date={2016},
}

\bib{GrL}{book}{
    AUTHOR = {Hartshorne, Robin},
     TITLE = {Local cohomology},
    SERIES = {A seminar given by A. Grothendieck, Harvard University, Fall},
    VOLUME = {1961},
 PUBLISHER = {Springer-Verlag, Berlin-New York},
      YEAR = {1967},
     PAGES = {vi+106}
}

\bib{Ka03}{book}{
   author={Kashiwara, Masaki},
   title={$D$-modules and microlocal calculus},
   series={Translations of Mathematical Monographs},
   volume={217},
   publisher={American Mathematical Society, Providence, RI},
   date={2003},
   pages={xvi+254}
}

\bib{KS90}{book}{
   author={Kashiwara, Masaki},
   author={Schapira, Pierre},
   title={Sheaves on manifolds},
   series={Grundlehren der Mathematischen Wissenschaften},
   volume={292},
   publisher={Springer-Verlag, Berlin},
   date={1990},
   pages={x+512},
}

\bib{KS96} {book}{
 author={Kashiwara, Masaki},
 author={Schapira, Pierre},
title={Moderate and formal cohomology associated with constructible sheaves},
series={M{\'e}moires},
publisher={Soc. Math. France},
volume={64},
date={1996},
}

\bib{KS01} {book}{
author={Kashiwara, Masaki},
 author={Schapira, Pierre},
title={Ind-Sheaves},
series={Ast\'erisque},
volume={271},
publisher={Soc. Math. France},
date={2001},
eprint ={arXiv:1003.3304}
}

\bib{KS16}{book}{
   author={Kashiwara, Masaki},
    author={Schapira, Pierre},
   title={Regular and Irregular Holonomic D-Modules},
  book={ series={London Mathematical Society Lecture Note Series}},
   volume={433},
   publisher={Cambridge University Press},
   date={2016},
   pages={117},
}

\bib{Lo59} {article}{
author={Lojaciewicz, Stanislaw},
title={Sur le probl{\`e}me de la division}, 
journal={Studia Math},
volume= {8}, 
date={1959}, 
pages={87-136}
}

\bib{Ma66} {book}{
author={Malgrange, Bernard},
title={Ideals od differentiable functions}, 
series={Tata Institute of Fundamental Research},
publisher={Oxford University Press}, 
date={1966},
}

\bib{Porta}{article}{
   author={Porta, Mauro},
   author = {Yu, Tony ~Yue},
    title = {Higher analytic stacks and GAGA theorems},
  journal = {ArXiv e-prints},
   eprint = {https://arxiv.org/abs/1412.5166},
     date = {2014},
     volume= {},
     pages={},
}

\bib{Pre08}{article}{
   author={Prelli, Luca},
   title={Sheaves on subanalytic sites},
   journal={Rend. Semin. Mat. Univ. Padova},
   volume={120},
   date={2008},
   pages={167--216},
}

\bib{Ru68}{article}{
   author={Rudin, Walter},
   title={A geometric criterion for algebraic varieties},
   journal={J. Math. Mech.},
   volume={17},
   date={1967/1968},
   pages={671--683},
  }

\bib{RW80}{article}{
   author={Rusek, Kamil},
   author={Winiarski, Tadeusz},
   title={Criteria for regularity of holomorphic mappings},
   language={English, with Russian summary},
   journal={Bull. Acad. Polon. Sci. S\'er. Sci. Math.},
   volume={28},
   date={1980},
   number={9-10},
   pages={471--475 (1981)},
}

\bib{Se55}{article}{
author={Serre, Jean-Pierre},
title={G{\'e}om{\'e}trie alg{\'e}brique et g{\'e}om{\'e}trie analytique},
journal={Ann. Institut Fourier de Grenoble},
volume={6},
pages={1-42},
year={1955/1956}
}

\bib{SGA1}{book}{
     LABEL={SGA1},
     TITLE = {Rev{\^e}tements {\'e}tales et groupe fondamental ({SGA} 1)},
    SERIES = {Documents Math\'ematiques (Paris), 3},
      NOTE = {S{\'e}minaire de g{\'e}om{\'e}trie alg{\'e}brique du Bois
              Marie 1960--61.,
              Directed by A. Grothendieck,
              With two papers by M. Raynaud,
              Updated and annotated reprint of the 1971 original [Lecture
              Notes in Math., 224, Springer, Berlin},
 PUBLISHER = {Soci{\'e}t{\'e} Math\'ematique de France, Paris},
      YEAR = {2003},
     PAGES = {xviii+327}
}

\bib{Si70}{article}{
author={Siu, Yum-Tong},
title={Holomorphic functions of polynomial growth on bounded domains},
journal={Duke Math. Journal},
volume={37},
pages={77-84},
year={1970}
}

\bib{Ta02}{book}{
author={Taylor, Joseph, L},
title={Several complex variables with connections to algebraic geometry and Lie groups},
series={Graduates studies in Mathematics},
publisher={Americam Mathematical Society},
volume={46},
pages={507},
year={2002},
}

\bib{stackproject}{misc}{
  author       = {The {Stacks Project Authors}},
  title        = {\itshape Stacks Project},
  eprint= {http://stacks.math.columbia.edu},
  year         = {2016},
}
\end{biblist}
\end{bibdiv}

\footnote{
\noindent Fran{\c c}ois~Petit, \textsc{Mathematics Research Unit, University of  Luxembourg} \par\nopagebreak \textit{email:} \texttt{francois.petit@uni.lu}
}

\end{document}